\newcommand\F{\mathbb{F}}
\newcommand\Fq{\mathbb{F}_q}
\newcommand\Q{\mathbb{Q}}
\newcommand\Z{\mathbb{Z}}
\def\Jac{\mathop{\rm Jac}\nolimits}
\def\rk{\mathop{\rm rk}\nolimits}
\def\Hom{\mathop{\rm Hom}\nolimits}
\def\End{\mathop{\rm End}\nolimits}
\def\Gal{\mathop{\rm Gal}\nolimits}
\def\Jac{\mathop{\rm Jac}\nolimits}
\def\Mod{\mathop{\rm Mod}\nolimits}
\def\Pr{\mathop{\rm Pr}\nolimits}
\def\KerC{\mathop{\rm Ker_\mathcal{C}}\nolimits}
\newcommand\cB{\mathcal{B}}
\newcommand\cC{\mathcal{C}}
\newcommand\cO{\mathcal{O}}
\newcommand\frl{\mathfrak{l}}
\newcommand\frm{\mathfrak{m}}
\newcommand\frn{\mathfrak{n}}
\newcommand\frL{\mathfrak{L}}
\newcommand\frM{\mathfrak{M}}
\newcommand\frN{\mathfrak{N}}
\newcommand\into{\hookrightarrow}
\newcommand\onto{\twoheadrightarrow}
\theoremstyle{plain}
\newtheorem{maintheorem}{Main Theorem}
\newtheorem*{cor_intro}{Corollary}
\newtheorem{theorem}{Theorem}[section]
\newtheorem{lemma}[theorem]{Lemma}
\newtheorem{proposition}[theorem]{Proposition}
\newtheorem{corollary}[theorem]{Corollary}
\theoremstyle{definition}
\newtheorem{definition}[theorem]{Definition}
\newtheorem{remark}[theorem]{Remark}
\newtheorem{example}[theorem]{Example}
\newcommand\PirrA{\mathcal{P}^\mathrm{irr}_\mathrm{npp}}
\newcommand\PirrB{\mathcal{P}^\mathrm{irr}_\mathrm{Wres}}
\title{Abelian surfaces over finite fields containing no curves of genus $3$ or less}
\author{Elena Berardini}
\address{CNRS; IMB, Université de Bordeaux, 351 cours de la Libération, 33405 Talence, France}
\email{elena.berardini@math.u-bordeaux.fr}
\author{Alejandro Giangreco Maidana}
\address{Facultad de Ingeniería, Universidad Nacional de Asunción}
\email{agiangreco@ing.una.py}
\author{Stefano Marseglia}
\address{Mathematisch Instituut, Universiteit Utrecht, Postbus 80010, 3508 TA Utrecht, The Netherlands}
\address{Laboratoire GAATI, Université de la Polynésie française, BP 6570 -- 98702 Faaa, Polynésie française}
\address{Laboratoire Jean Alexandre Dieudonn\`e, Universit\`e C\^ote d'Azur, 06108 Nice Cedex 2, France}
\email{stefano.marseglia@univ-cotedazur.fr}
\keywords{abelian varieties, algebraic curves, finite fields, polarisations}
\subjclass[2020]{Primary:   14K15, % Arithmetic ground fields for abelian varieties
                            11G20.  %Curves over finite and local fields 
                 Secondary: 14G15, % Finite ground fields in algebraic geometry
                            11G10 % Abelian varieties of dimension~$> 1$
                           }
\begin{document}

\begin{abstract}
    We study abelian surfaces defined over finite fields which do not contain any possibly singular curve of genus less than or equal to $3$. 
    Firstly, we complete and expand the characterisation of isogeny classes of abelian surfaces with no curves of genus up to $2$ initiated by the first author \emph{et al.~}in previous work.
    Secondly, we show that, for simple abelian surfaces, containing a curve of genus $3$ is equivalent to admitting a polarisation of degree $4$. Thanks to this result, we can use existing algorithms to check which isomorphism classes in the isogeny classes containing no genus $2$ curves have a polarisation of degree $4$. 
    Thirdly, we characterise isogeny classes of abelian surfaces with no curves of genus $\leq 2$, containing no abelian surface with a polarisation of degree $4$. 
    Finally, we describe the absolutely irreducible genus $3$ curves lying on abelian surfaces containing  no curves of genus less than or equal to~$2$.%, and show that their number of rational points is far from the Serre--Weil bound.
\end{abstract}

\maketitle
\section*{Introduction}
Studying the minimal genus of algebraic curves lying on an abelian variety is a classical question \cite{AP90,debarre1994degrees,BCV95}. 
In this paper, we focus our attention on abelian surfaces.
Over an algebraically closed field, every abelian surface is isogenous to a principally polarised one  \cite[Cor.~1, p.~234]{M_AV70}, that is, to the Jacobian of a smooth genus 2 curve or to the product of two elliptic curves. 
A well-known consequence of this fact is that every abelian surface over an algebraically closed field contains a (possibly singular) irreducible curve of geometric genus $2$ or $1$; see also Lemma \ref{lem:ggg} below.  On another note, one can also study when an abelian surface contains curves of a fixed genus. Over the complex numbers, this question has been treated in \cite{Barth} for genus $3$ curves and in \cite{BS17} for hyperelliptic curves of genus $4$. Over non-algebraically closed fields, the situation is more complicated. For example, it is no longer true that any abelian surface is isogenous to a principally polarised one. 

From now on, we direct our attention to the case of finite fields. Isogeny classes in which no abelian surface admits a principal polarisation are classified in \cite{HMNR08}. Furthermore, even if an isogeny class is principally polarisable, it might not contain a Jacobian surface nor the product of two elliptic curves \cite{HNR09}. Indeed, by a theorem of Weil \cite[Thm.~1.3]{HNR09}, we have one more option among principally polarisable abelian surfaces, that is, Weil restrictions of elliptic curves defined over a quadratic extension of the finite field.  To sum up,  studying the minimal genus of algebraic curves lying over an abelian surface defined over a finite field is a hard question, and it is in fact open in its full generality.

Besides the intrinsic theoretical interest, this question was raised in the context of coding theory. The first author and her co-authors studied in \cite{MYFE1} the so-called algebraic geometry codes constructed from a rational divisor on an abelian surface defined over a finite field $\Fq$. They showed that the minimum distance of such codes respects, under some hypotheses, a lower bound that is somewhat proportional to the minimum genus of the $\Fq$-irreducible curves defined over $\Fq$ lying on the surface \cite[Thm.~3.3 and Rmk.~3.6]{MYFE1}. Since codes with large minimum distance are desirable, this led to the question on how to find abelian surfaces containing no curves of low genus.

\subsection*{Our contributions.}
In the following, by \emph{curve} we mean an algebraic projective one-dimensional variety defined over a field. 
Note that for us a curve doesn't need to be smooth or irreducible.
In the rest of the paper, unless otherwise specified, we will assume the base field to be $\Fq$, a finite field with $q$ elements of characteristic $p$, and the curve to be $\Fq$-irreducible.
The definition of arithmetic and geometric genus of such a curve is recalled at the beginning of Section~\ref{sec:genus2}.

An isogeny class of abelian surfaces over $\Fq$ corresponds via the Honda--Tate theory to a polynomial of the form $f(t)=t^4+at^3+bt^2+aqt+q^2$ in $\Z[x]$, called Weil polynomial. 
Previous work by the first author and coauthors showed that abelian surfaces defined over $\Fq$ which do not contain absolutely irreducible curves of arithmetic genus up to $2$ are either not isogenous to a principally polarised abelian surface or are isogenous to Weil restrictions of some elliptic curves defined over $\mathbb{F}_{q^2}$; see \cite[Prop.~4.2 and 4.3]{MYFE1}.  
We denote the set of Weil polynomials corresponding to the former isogeny classes by $\PirrA$ while the latter classes splits into $\{ (t^2-2)^2 , (t^2-3)^2 \} \sqcup\PirrB$; see Definition \ref{df:partition} for the precise definition of these classes. 
As the notation suggests, the Weil polynomials of the aforementioned isogeny classes are all irreducible except for $(t^2-2)^2$ and $(t^2-3)^2$.
These results are expanded in Main Theorem~\ref{mainthm0}.

\begin{maintheorem}\label{mainthm0}
    Let $f(t)$ be a Weil polynomial corresponding to an isogeny class $\cC$ of simple abelian surfaces over $\Fq$.
    The following statement holds:
    \begin{enumerate}[(i)]
        \item \label{mainthm0:abs_irr_g_leq2} No $A$ in $\cC$ contains an absolutely irreducible curve of geometric genus $\leq 2$ if and only if $f(t)$ is in $\PirrA \sqcup \{ (t^2-2)^2 , (t^2-3)^2 \} \sqcup\PirrB$.
        \item \label{mainthm0:Pnpp} No $A$ in $\cC$ contains an $\Fq$-irreducible curve of arithmetic genus $\leq 2$ if and only if  $f(t)$ is in $\PirrA$.
        \item \label{mainthm0:PWres} No $A$ in $\cC$ contains an absolutely irreducible curve with geometric genus $\leq 2$ 
        but there exists a $B$ in $\cC$ which contains an $\Fq$-irreducible curve with arithmetic genus $2$  if and only if $f(t)$ is in $\{ (t^2-2)^2 , (t^2-3)^2 \} \sqcup\PirrB$.
    \end{enumerate}
\end{maintheorem}
An expanded version of Part~\ref{mainthm0:abs_irr_g_leq2} is in Theorem~\ref{thm:no_g1g2}, where we also give a characterisation of the Weil polynomials in terms of their coefficients.
In Remark \ref{rmk:gap} we show that restricting the statement of Part~\ref{mainthm0:abs_irr_g_leq2}  to absolutely irreducible curves of arithmetic genus up to $2$ does not provide an equivalence. Parts~\ref{mainthm0:Pnpp} and \ref{mainthm0:PWres} are proven as Corollary~\ref{cor:mainthm0}.

The rest of the paper aims at extending Main Theorem~\ref{mainthm0} to curves of genus $\leq 3$.
The main tool for this purpose will be our second main result, which can be found later in the text as Theorem~\ref{thm:pol_deg_4_iff_pa3}.
\begin{maintheorem}\label{mainthm1}
    Let $A$ be a simple abelian surface defined over $\Fq$. Then, the following are equivalent:
    \begin{enumerate}[(i)]
        \item $A$ has a polarisation of degree $4$,
        \item $A$ contains an $\Fq$-irreducible curve of arithmetic genus $3$.
    \end{enumerate}
\end{maintheorem}

Firstly, we treat the isogeny classes $\cC$ and $\cC'$ defined by the reducible Weil polynomials $(t^2-2)^2$ and $(t^2-3)^2$, respectively.
As we know by Main Theorem~\ref{mainthm0}.\ref{mainthm0:PWres}, both $\cC$ and $\cC'$ contain at least one abelian surface with an $\Fq$-irreducible -- but not absolutely irreducible -- curve of arithmetic genus $2$ lying on it.
So our focus is on absolutely irreducible curves of arithmetic genus $3$.
In Proposition~\ref{prop:q2q3}, we show that no abelian surface in $\cC$ contains an absolutely irreducible curve of geometric or arithmetic genus $3$, while there exists an abelian surface in $\cC'$ containing an absolutely irreducible smooth curve of genus $3$.

Secondly, we consider the irreducible Weil polynomials $f(t) \in \PirrA\sqcup\PirrB$.
When the corresponding isogeny class $\cC$ is ordinary, the third author designed an algorithm in \cite{Mar21} to compute the isomorphism classes of abelian surface in $\cC$ admitting a polarisation of degree $4$, and hence containing an $\Fq$-irreducible curve of arithmetic genus $3$.
See Section~\ref{s:examples} for an overview and examples.
By applying Main Theorem~\ref{mainthm1} to our isogeny classes, we easily obtain the following statements about curves of arithmetic genus $\leq 3$ (and not just $=3$).
See Corollary~\ref{cor:consequence} for the proof.
\begin{cor_intro}\label{cor_intro:consequences}
    Let $f(t)$ be the Weil polynomial of a simple isogeny class $\cC$ of abelian surfaces over $\Fq$.
    \begin{enumerate}[(i)]
        \item \label{mainthm1:consequence:1} 
            Assume that $f(t)\in\PirrA\sqcup\PirrB$ and let $A$ be an abelian surface in $\cC$.
            If $A$ contains an absolutely irreducible curve of arithmetic genus $\leq 3$ then $A$ admits a polarisation of degree $4$.
        \item \label{mainthm1:consequence:2}
            No abelian surface in $\cC$ contains an $\Fq$-irreducible curve of arithmetic genus $\leq 3$ if and only if $f(t)$ is in $\PirrA$ and no abelian surface in $\cC$ admits a polarisation of degree~$4$.
    \end{enumerate}      
\end{cor_intro}

Finally, in Main Theorem~\ref{mainthm2} below, we give a complete classification of the isogeny classes determined by $f(t)\in \PirrA\sqcup\PirrB$ that contain an abelian surface with a polarisation of degree $4$.
The proof of this classification builds on Howe's seminal work \cite{Howe95,Howe96} on kernels of polarisations of abelian varieties over finite fields.
An expanded version of Main Theorem~\ref{mainthm2} appears later in the paper as Theorem~\ref{thm:kers_size_4}.
\begin{maintheorem}\label{mainthm2}
    Assume that $f(t)\in\PirrA\sqcup\PirrB$ and let $\cC$ be the corresponding isogeny class. 
    Set $K=\Q[t]/(f(t))$, which is a CM field with totally real subfield $K^+$. 
    Consider the statement:
    \begin{equation}\label{mainthm:pol}
    \text{There is no } A \text{ in } \cC \text{ admitting a polarisation of degree 4.} \tag{$\bigstar$}
    \end{equation}
    Then, the following statements hold.
    \begin{enumerate}[(i)]
        \item \label{mainthm:ord}
            Assume $\cC$ is ordinary. Then \eqref{mainthm:pol} holds if and only if there is no $A$ in $\cC$ with maximal $\Fq$-endomorphism ring admitting a polarisation of degree $4$.
        \item \label{mainthm:kers_size_4:A}
            Assume that $f(t)\in\PirrA$.
            Then \eqref{mainthm:pol} holds if and only if $2$ is inert in $K^+$.
        \item \label{mainthm:kers_size_4:B}
            Assume that $f(t)\in \PirrB$, so that $f(t)=t^4+bt^2+q^2$. 
            Then \eqref{mainthm:pol} is equivalent to 
            \begin{itemize}
            \item $b=1-2q$ and $q$ is odd, if $\cC$ is ordinary,
            \item $q$ is even, if $\cC$ is non-ordinary.
            \end{itemize}
    \end{enumerate}
\end{maintheorem}

\subsection*{Organisation of the paper.}
In Section \ref{sec:genus2}, we prove Main Theorem~\ref{mainthm0}, expanding on \cite{MYFE1} and characterising the isogeny classes of abelian surfaces containing no absolutely irreducible curves of geometric genus smaller than or equal to $2$.
In Section \ref{sec:g3onsurf}, we start the characterisation of abelian surfaces containing no curves of genus $3$ among those containing no curves of genus up to $2$. 
In particular, in Theorem~\ref{thm:pol_deg_4_iff_pa3}, we prove the key equivalence between containing a curve of arithmetic genus $3$ and having a polarisation of degree $4$ stated above as Main Theorem~\ref{mainthm1}.
We derive also some consequences stated above in the Corollary.
In Section \ref{sec:fact_of_2} we collect technical results on the factorisation of $2$ in the extension $K/\Q$, that we shall use throughout the paper. 
Section \ref{sec:kernels} is devoted to results on kernels of polarisations of abelian varieties, based on the work of Howe \cite{Howe95,Howe96}. 
In Section \ref{sec:pols4}, we prove an expanded version of Main Theorem~\ref{mainthm2}, namely Theorem~\ref{thm:kers_size_4}, in which we give necessary and sufficient conditions on an isogeny class to not contain surfaces admitting a polarisation of degree $4$. 
Section \ref{s:examples} describes the algorithm proposed by the third author in \cite{Mar21} to compute the isomorphism classes of abelian surface in an isogeny class admitting a polarisation of degree $4$, and provides examples for our Theorem \ref{thm:kers_size_4}. 
Finally, in Section \ref{ses:curveside}, we describe absolutely irreducible smooth curves of genus $3$ lying on abelian surfaces containing no curves of genus less than $2$. In particular, we give bounds for their number of rational points, showing that such genus $3$ curves are far from being maximal.

\section{Abelian surfaces containing no curves of genus \texorpdfstring{$\leq 2$}{<= 2}}\label{sec:genus2}
Let $A$ be a $g$-dimensional abelian variety defined over a finite field $\Fq$ of characteristic $p$.
The characteristic polynomial $f(t)$ of its Frobenius endomorphism acting on the $\ell$-Tate module (for any prime $\ell\neq p$) is a monic polynomial of degree $2g$ with integer coefficients.
All the complex roots of $f(t)$ have absolute value $\sqrt{q}$.
We call such a polynomial a Weil polynomial.
Honda and Tate showed in \cite{tate1966,honda1968,Tate1971} that $f(t)$ completely determines the isogeny class $\cC$ of $A$.
Recall that an abelian variety $A$ over $\Fq$ of dimension $g$ is called ordinary if the coefficient of $t^g$ in $f(t)$ is coprime with $q$.
In particular, being ordinary is a property of the isogeny class $\cC$ of $A$.
We will write that $\cC$ and $f(t)$ are ordinary if $A$ is so. 

In the rest of the paper, whenever we talk about a morphism, we always mean an $\Fq$-morphism. In particular, we will say simple for $\Fq$-simple, isogeny for $\Fq$-isogeny, etc.

By a \emph{curve} we mean a possibly singular one-dimensional projective variety defined over $\Fq$. 
We will only consider $\Fq$-irreducible curves in this paper.
In some cases, we will need our curves to be absolutely irreducible, that is, irreducible over the algebraic closure $\overline\F_q$ of $\Fq$. 
We say that a curve $C$ defined over $\Fq$ lies on an abelian surface $A$ or that $A$ contains $C$, if $C$ is a closed subvariety of $A$, that is, there is an $\Fq$-embedding of $C$ in $A$. A \emph{divisor} of $A$ defined over $\Fq$, or a \emph{rational divisor} over $\Fq$, is a formal sum, with integer coefficients, of $\Fq$-irreducible curves defined over $\F_q$ and lying on $A$\footnote{A divisor is often called rational over $\Fq$ when it is invariant under the action of $\mathrm{Gal}(\overline\F_q/\Fq)$. This implies in particular that the divisor can be written as a formal sum of $\Fq$-irreducible curves defined over $\F_q$ (see \cite[\S 3.1]{Haloui17}). In the present paper we shall only use this characterization, hence we take it as a definition.}. It is called \emph{effective} if all its coefficients are non-negative and \emph{ample} if a multiple of it is \emph{very ample}, that is, defines an embedding of the surface in a projective space.

The arithmetic genus $p_a(C)$ of an $\Fq$-irreducible curve $C$ is defined as $1-\chi(C)$, $\chi(C)$ being the Euler-Poincaré characteristic of $C$  \cite[Ch.~IV, Sec.~2]{Serre12}.
Consider the absolutely irreducible components $C_i$ of $C$, and their normalisations $\tilde{C_i}$. We define the geometric genus $g(C)$ of $C$ as $g(C) = p_a(\tilde{C_1})+\ldots+p_a(\tilde{C_n})$. 
We have that $g(C) \leq p_a(C)$ with an equality if and only if $C$ is smooth.
If $C$ is an $\F_q$-irreducible curve lying on an abelian surface $A$ then, since the class of the canonical divisor of $A$ is trivial, we have that $C^2=2p_a(C)-2$ by the adjunction formula \cite[Ch.~IV, Sec.~2, Prop.~5]{Serre12}.
\smallskip
 
The goal of this section is to prove some characterisations and properties of Weil polynomials of abelian surfaces which do not contain absolutely irreducible curves of geometric genus $0$, $1$ or $2$. We start with the following useful lemma.

\begin{lemma}\label{lem:ggg}
    Let $f\colon A\to B$ be an isogeny of abelian varieties defined over $\Fq$. 
    Let $C$ be an $\Fq$-irreducible curve of geometric genus $g$ defined over $\Fq$, lying on $A$. 
    Then, $f(C)$ is an $\Fq$-irreducible curve of geometric genus $\leq g$ on $B$.
    Moreover, if $C$ is absolutely irreducible then $f(C)$ is absolutely irreducible as well.
\end{lemma}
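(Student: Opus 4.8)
The plan is to exploit the fact that an isogeny $f\colon A\to B$ is a finite surjective morphism, so it restricts to a finite surjective morphism from $C$ onto its scheme-theoretic image $f(C)$. First I would check that $f(C)$ is a curve, i.e.\ that it is one-dimensional: since $f$ has finite fibres, $\dim f(C)=\dim C=1$, and since $C$ is a projective variety and $f$ is proper, $f(C)$ is a closed subvariety of $B$; irreducibility of $f(C)$ follows from irreducibility of $C$ because the continuous image of an irreducible space is irreducible. This handles the claim that $f(C)$ is a curve on $B$ in the sense fixed in the paper.

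Next I would address the genus. The key point is that the induced morphism $C\to f(C)$ is dominant between curves, so it induces an inclusion of function fields $k(f(C))\hookrightarrow k(C)$, hence passes to the normalisations to give a finite surjective morphism $\widetilde C\to\widetilde{f(C)}$ of smooth projective curves. To see this has degree $1$ — i.e.\ is birational — I would use that $f$ is generically injective \emph{on $C$}: indeed $\ker f$ is a finite subgroup scheme of $A$, and for a generic point $x\in C$ the fibre $f^{-1}(f(x))\cap C = (x+\ker f)\cap C$ is finite, but more importantly, if $f$ identified two points of $C$ on a dense set, then $C$ would be stable under translation by some nontrivial element of $\ker f$, forcing $C$ to be a union of cosets of a positive-dimensional subvariety, contradicting $\dim C=1$ (a one-dimensional subgroup would make $C$ itself a translate of an elliptic curve, on which $f$ restricted is still an isogeny and hence has trivial generic fibre intersection after all, or one argues directly that the stabiliser of a curve in an abelian variety is finite). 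So $C\to f(C)$ is generically one-to-one, the function field extension is purely inseparable or trivial; in characteristic $0$ it is trivial, and in characteristic $p$ one must be slightly more careful, but since $f$ is a \emph{separable-or-not} isogeny the map on the curve is nonetheless birational onto its image because... — this is the subtle point, see below. Granting birationality, $\widetilde C\cong\widetilde{f(C)}$, so they have the same genus $g$, which is exactly the assertion.

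The main obstacle, and the step that needs the most care, is precisely ruling out inseparability of $C\to f(C)$ when $\operatorname{char} k=p$ and $f$ is an inseparable isogeny (e.g.\ Frobenius). A clean way around this: factor $f$ as a composition of a separable isogeny and powers of the relative Frobenius; for a separable isogeny the restriction to $C$ is generically \'etale onto its image once we know it is generically injective (which follows from the finite-stabiliser argument above), hence birational; for the Frobenius $F\colon A\to A^{(p)}$, the image of $C$ is $C^{(p)}$ and the induced map $\widetilde C\to\widetilde{C^{(p)}}$ is the Frobenius of $\widetilde C$, which preserves the genus since $\widetilde{C^{(p)}}=\widetilde C^{(p)}$ and a curve and its Frobenius twist are isomorphic as abstract curves (only the $k$-structure differs). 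Composing, the geometric genus is preserved at each stage. Finally, for the absolute-irreducibility addendum: base-change everything to $\overline{\F}_q$; since $f$ is an isogeny defined over $k$, its base change $f_{\overline{k}}$ is still an isogeny, and $f(C)_{\overline{k}}=f_{\overline{k}}(C_{\overline{k}})$; if $C$ is absolutely irreducible then $C_{\overline{k}}$ is irreducible, and by the first paragraph applied over $\overline{k}$ its image $f_{\overline{k}}(C_{\overline{k}})$ is irreducible, i.e.\ $f(C)$ is absolutely irreducible.
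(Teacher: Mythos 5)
You correctly zero in on the crux---generic injectivity of $f|_C$---which the paper's own one-line proof (``$f(C)$ is birationally equivalent to $C$'') asserts without justification. But the argument you give for it has a genuine gap. You claim that if $f$ identified points of $C$ generically, then $C$ would be stable under translation by a nontrivial $t\in\ker f$, and that this would force $C$ to be ``a union of cosets of a positive-dimensional subvariety,'' contradicting $\dim C=1$. That implication fails: the translation stabiliser $\mathrm{Stab}(C)=\{a\in A:C+a=C\}$ is a closed subgroup, but it can be a nontrivial \emph{finite} group, in which case $C$ is a union of finitely many translates under a finite group, entirely compatible with $\dim C=1$. Your parenthetical acknowledges that the stabiliser is finite once $C$ is not a translate of an elliptic curve, but finite is not trivial, and (for $f$ separable) the degree of $C\to f(C)$ is exactly $|\mathrm{Stab}(C)\cap\ker f|$, which need not be $1$.

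The gap is substantive: the lemma as stated is in fact false without an extra hypothesis. Take $A'=\Jac(D)$ for $D$ a genus-$2$ curve embedded as a theta divisor, a nonzero $2$-torsion class in $(A')^\vee$, and let $q\colon A\to A'$ be the corresponding connected degree-$2$ \'etale isogeny with $\ker q=\{0,t\}$. Since $(A')^\vee\to J(D)$ is injective (Lefschetz, $D$ ample), the restricted $2$-torsion line bundle on $D$ is nontrivial, so $C:=q^{-1}(D)$ is an irreducible smooth genus-$3$ curve on $A$ with $C+t=C$; then $q(C)=D$ has geometric genus $2\neq 3$. A correct statement requires, e.g., $\mathrm{Stab}(C)\cap\ker f=\{0\}$. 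Fortunately the paper's uses survive: in Theorem~\ref{thm:no_g1g2} the lemma is applied to a genus-$2$ curve, which has no fixed-point-free automorphism by Riemann--Hurwitz, hence trivial stabiliser; and in Remark~\ref{rms:genus_2_invariant} only the assertion that the geometric genus of $f(C)$ is \emph{at most} that of $C$ is needed, which holds unconditionally by the Riemann--Hurwitz inequality. Your handling of the inseparable case via Frobenius factorisation is sound; it is precisely your attempt to make the paper's implicit birationality claim precise that exposes this inaccuracy in the stated lemma.
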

\begin{proof}
    Set $D=f(C)$ and denote by $f_C \colon C\to D$ the restriction of $f$ to $C$.
    Since $f$ is an isogeny, then $D$ is an $\Fq$-irreducible curve.
    By the universal property of the normalisation $\tilde D\to D$, the composition of the normalisation $\tilde C\to C$ with $f_C\colon C\to D$ factors as the composition of a morphism $\tilde f_C\colon\tilde C\to \tilde D$ with $\tilde D\to D$. 
    % By the discussion on page 291 of Liu's book,
    The morphism $\tilde f_C$ factors as $\tilde C \overset{i}{\to} Y \overset{s}{\to} \tilde D$ with $Y$ a normal scheme, $i$ a purely inseparable morphism and $s$ a separable morphism. 
    %by Prop 4.21 of Liu's book 
    Then, $g(\tilde C)=g(Y)$.
    Finally, 
    % Theorem 4.16 (Riemann-Hurwitz) of Liu's book 
    the Riemann-Hirwitz Theorem applied to $s:Y\to \tilde D$ gives us $g(\tilde D)\leq g(Y)$.
    Combining, we obtain $g(C)=g(\tilde C)=g(Y) \geq g(\tilde D) = g(D)$.
    
    For the final statement, assume that $C$ is absolutely irreducible and let $\bar C$ and $\bar f$ be the extension of $C$ and $f$, respectively, to an algebraic closure $\bar\Fq$ of $\Fq$.
    Then $\bar C$ is irreducible, and since $\bar f$ is continuous, we get that $\bar f(\bar C)$ is irreducible as well.
    Since $\bar f(\bar C)$ is the extension of $D$ to $\bar\Fq$, we conclude that $D$ is absolutely irreducible.
\end{proof}
The statement and proof of Lemma~\ref{lem:ggg} carry over verbatim if we replace $\Fq$ by an arbitrary perfect field.

The following results relate the existence of a principal polarisation on an abelian surface $A$ and of curves of genus $2$ on $A$.
\begin{lemma}\label{lem:Barth1.1}
    Let $C$ and $D$ be two $\Fq$-irreducible curves lying on an abelian surface $A$ such that $C.D=0$. Then, $C$ and $D$ are both elliptic curves lying on $A$.
\end{lemma}
\begin{proof}
    This is part of \cite[Lemma~1.1]{Barth} whose proof adapts to finite fields without any change.
\end{proof}

\begin{proposition}\label{prop:pp_iff_pa2}
    Let $A$ be a simple abelian surface over $\Fq$.
    Then the following statements are equivalent:
    \begin{enumerate}[(i)]
        \item\label{prop:pp_iff_pa2:pp} $A$ has a principal polarisation;
        \item\label{prop:pp_iff_pa2:pa2} $A$ contains an $\Fq$-irreducible curve of arithmetic genus $2$.
    \end{enumerate}
\end{proposition}
\begin{proof}
    Assume that $A$ has a principal polarisation.
    Since $\Fq$ is finite, there exists an ample invertible sheaf $\mathcal{L}\in\mathrm{Pic}(A)$ defined over $\Fq$ such that the principal polarisation is given by $\lambda_\mathcal{L}$ \cite[Remark 13.2]{CS86}.
    By \cite[p.~150]{M_AV70}, if we write $\mathcal{L}=\mathcal{L}(D)$ for some effective rational divisor $D$ then 
    \begin{equation}\label{deg_pol_self_intersection}
       \deg \lambda_\mathcal{L}= \chi(D)^2=(D^2/2)^2.
    \end{equation}
    Since $\deg \lambda_\mathcal{L}=1$, it follows that 
    $D^2=2$.
    Hence, if $D=E+F$ were reducible then, from the formula 
    \begin{equation}\label{eq:div_square} 
        D^2=E^2+F^2+2\cdot E.F,
    \end{equation}
    we would get that $D$ is the sum of two elliptic curves.
    Since $A$ is simple, it does not contain any elliptic curve.
    Hence, $D$ is an $\Fq$-irreducible curve of arithmetic genus $2$.

    Conversely, assume that $A$ contains an $\Fq$-irreducible curve $C$ of arithmetic genus $2$.
    By the adjunction formula, we get $C^2=2$.
    Lemma \ref{lem:Barth1.1} implies that $C$ intersects any other $\Fq$-irreducible curve on $A$ positively. Hence, the Nakai--Moishezon criterion \cite[Sec.~5, Thm.~1.10]{Hart} says that the divisor defined by $C$ is ample.
    So, the induced isogeny $A \to A^\vee$ is a polarisation.
    Therefore, $C$ defines a principal polarisation on $A$ by Equation~\eqref{deg_pol_self_intersection}.
\end{proof}

Part of the statement of the next theorem is proved in \cite{MYFE1}. However, as we will discuss in Remark \ref{rmk:gap}, there were some gaps that we fill in.
\begin{theorem}\label{thm:no_g1g2}
    Let $A$ be an abelian surface defined over $\Fq$ with
    Weil polynomial 
    \[ f(t) = t^4 +a t^3 +bt^2+qat+q^2. \]
    Then, the following statements are equivalent:
    \begin{enumerate}[label = (\roman*), ref=(\roman*)]
    \item\label{nocurves} $A$ does not contain absolutely irreducible curves of geometric genus $0, 1$ or $2$;
    \item\label{simple_nojac}   $A$ is simple and not isogenous to the Jacobian of an absolutely irreducible smooth genus $2$ curve;
    \item\label{nopp_wr} exactly one of the following statements holds:
        \begin{enumerate}[label = (\alph*), ref=\theenumi{}.(\alph*)]
            \item\label{not_isog_PP} $A$ is not isogenous to a principally polarised abelian surface, which is equivalent to have $a^2-b =q$, $b<0$ and all prime divisors of $b$ are congruent to $1 \bmod 3$;
            \item \label{weil_restr} $A$ is isogenous to a Weil restriction of an elliptic curve defined over the quadratic extension of $\Fq$ (hence $a=0$) and one of the following conditions holds:
            \begin{itemize}
                \item $b = 1-2q$;
                \item $p>2$ and $b=2-2q$;
                % \item $p \equiv 11 \bmod 12$ or $p=3$, $q$ is a square and $b=-q$;
                \item $p \equiv 11 \bmod 12$, $q$ is a square and $b=-q$;
                \item $p=3$, $q$ is a square and $b=-q$;
                \item $p=2$, $q$ is nonsquare and $b=-q$;
                % \item $q=2$ or $q=3$ and $b=-2q$.
                \item $q=2$ and $b=-4$;
                \item $q=3$ and $b=-6$.
            \end{itemize}
        \end{enumerate}
    \end{enumerate}
\end{theorem}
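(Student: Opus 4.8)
The plan is to split the statement into a \emph{geometric} equivalence, $(\ref{nocurves})\Leftrightarrow(\ref{simple_nojac})$, which I would prove by hand, and a \emph{bookkeeping} equivalence, $(\ref{simple_nojac})\Leftrightarrow(\ref{nopp_wr})$, which repackages the classification of principally polarisable abelian surfaces over finite fields into numerical conditions on $f(t)$. The geometric equivalence is the genuinely new input: it upgrades the arithmetic-genus statements of \cite{MYFE1} to geometric genus and turns them into an equivalence. The bookkeeping equivalence is close to what is already in \cite{MYFE1,HMNR08,HNR09}, but requires the gaps recorded in Remark~\ref{rmk:gap} to be repaired.

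To prove $(\ref{simple_nojac})\Rightarrow(\ref{nocurves})$, let $A$ be simple and not isogenous to a genus $2$ Jacobian. There is no absolutely irreducible curve of geometric genus $0$ on $A$, since any morphism from a genus-$0$ curve to an abelian variety is constant. Suppose then that $A$ contains an absolutely irreducible curve $C$ of geometric genus $g\in\{1,2\}$, with normalisation $\tilde C$, and consider the composite $\tilde C\to C\into A$, a morphism from a smooth geometrically connected curve of genus $g$. Since by Lang's theorem the torsor $\mathrm{Pic}^1_{\tilde C}$ under $\Jac(\tilde C)$ has an $\Fq$-rational point, $\tilde C$ embeds in $\Jac(\tilde C)$ over $\Fq$, and the universal property of the Jacobian makes this composite factor, up to translation, through a homomorphism $\varphi\colon\Jac(\tilde C)\to A$ defined over $\Fq$. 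Its image $B=\varphi(\Jac(\tilde C))$ contains a translate of $C$ (hence $\dim B\ge 1$) and is a quotient of $\Jac(\tilde C)$ (hence $\dim B\le g$). Since $A$ is simple of dimension $2$, $B\in\{0,A\}$; the only possibility left by $1\le\dim B\le g\le 2$ is $g=2$ and $B=A$, whence $\varphi$ is an isogeny and $A$ is isogenous to $\Jac(\tilde C)$, the Jacobian of the absolutely irreducible smooth genus $2$ curve $\tilde C$ — a contradiction.

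For $(\ref{nocurves})\Rightarrow(\ref{simple_nojac})$ I argue by contraposition. If $A$ is not simple, it contains a one-dimensional abelian subvariety over $\Fq$, that is, an elliptic curve, which is an absolutely irreducible curve of geometric genus $1$ on $A$. If instead $A$ is isogenous to $\Jac(C)$ for an absolutely irreducible smooth genus $2$ curve $C$, then $C$ embeds in $\Jac(C)$ over $\Fq$ (trivialising the torsor $\mathrm{Pic}^1_C$ by Lang's theorem again), and the image of this copy of $C$ under an isogeny $\Jac(C)\to A$ is, by Lemma~\ref{lem:ggg}, an absolutely irreducible curve of geometric genus $2$ on $A$. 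In either case $(\ref{nocurves})$ fails.

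For $(\ref{simple_nojac})\Leftrightarrow(\ref{nopp_wr})$, recall Weil's theorem \cite[Thm.~1.3]{HNR09}: every principally polarised abelian surface over $\Fq$ is a product of polarised elliptic curves, the Jacobian of a smooth genus $2$ curve, or the Weil restriction of an elliptic curve over $\F_{q^2}$; all three are in particular principally polarisable. Assume $A$ is simple and not isogenous to a genus $2$ Jacobian. Then either no abelian surface in the isogeny class of $A$ is principally polarisable — which by \cite{HMNR08} is equivalent to the numerical condition in $(\ref{not_isog_PP})$ — or some surface in the class is, and it is neither a product (as $A$ is simple) nor a Jacobian (by hypothesis), hence a Weil restriction, so $(\ref{weil_restr})$ holds; conversely each of $(\ref{not_isog_PP})$ and $(\ref{weil_restr})$ implies $(\ref{simple_nojac})$ because products, genus $2$ Jacobians and Weil restrictions of elliptic curves are all principally polarisable, and the two cases exclude one another for the same reason. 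It then remains to translate $(\ref{weil_restr})$ into numerics: the Weil polynomial of $\mathrm{Res}_{\F_{q^2}/\Fq}(E)$ is $P_E(t^2)$, where $P_E$ is the Weil polynomial of $E/\F_{q^2}$, so $a=0$ and $b=-\Tr(\mathrm{Frob}_E)$ (and, as the leading hypothesis of $(\ref{weil_restr})$ records, such a surface is never simple after base change to $\F_{q^2}$); running over all elliptic curves $E/\F_{q^2}$ and discarding those for which $\mathrm{Res}_{\F_{q^2}/\Fq}(E)$ fails to be $\Fq$-simple or is isogenous to a genus $2$ Jacobian yields the list of conditions on $b$, with $(t^2-2)^2$ and $(t^2-3)^2$ appearing as the two reducible exceptions. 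I expect this last step to be the main obstacle: one must sort the Weil restrictions according to the residue characteristic (hence the conditions on $p\bmod 12$), according to whether $q$ is a square, and according to the two exceptional base fields $q\in\{2,3\}$, and then settle in each subcase both $\Fq$-simplicity and non-isogeny to a genus $2$ Jacobian — precisely where the gaps in \cite{MYFE1} lie and have to be filled. The geometric equivalence $(\ref{nocurves})\Leftrightarrow(\ref{simple_nojac})$, by contrast, is short, but it is the conceptual heart of the improvement, since it is what lets one phrase everything in terms of geometric rather than arithmetic genus.
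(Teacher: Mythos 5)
Your proof is correct and follows the paper's approach: the geometric equivalence $(\ref{nocurves})\Leftrightarrow(\ref{simple_nojac})$ is proved by passing to the normalisation and factoring through its Jacobian (you treat $g=1$ and $g=2$ uniformly, where the paper invokes \cite[Prop.~1]{Haloui17} for the $g=1$ case), and the bookkeeping equivalence $(\ref{simple_nojac})\Leftrightarrow(\ref{nopp_wr})$ via Weil's classification theorem together with the numerical criteria of \cite{HMNR08}, \cite{HNR09} and \cite[Prop.~4.3]{MYFE1}. One small correction to your framing: the gap in \cite{MYFE1} flagged in Remark~\ref{rmk:gap} is not in the Weil-restriction coefficient list (which \cite[Prop.~4.3]{MYFE1} gets right) but in the claimed implication from absence of arithmetic-genus-$\le 2$ curves to simplicity and non-isogeny to a Jacobian, and it is precisely your geometric-genus argument that repairs it.
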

\begin{proof}
    We start by proving \ref{nocurves}$\Rightarrow$\ref{simple_nojac}. By assumption, $A$ does not contain an elliptic curve, hence it is simple. Assume that $A$ is isogenous to the Jacobian of a smooth absolutely irreducible genus $2$ curve $C$.
    Since $C$ is canonically embedded into its Jacobian, Lemma~\ref{lem:ggg} would imply that $A$ contains an absolutely irreducible curve of geometric genus $\leq 2$, in contradiction with \ref{nocurves}.

    Now we show that \ref{simple_nojac}$\Rightarrow$\ref{nocurves}.
    First note that $A$ cannot contain an absolutely irreducible curve $D$ of geometric genus $0$.
    Indeed, the normalisation $\tilde D$ of $D$ is birationally equivalent to $\mathbb{P}^1$ and, by \cite[Cor.~3.8]{CS86}, the only rational maps from $\mathbb{P}^1$ to $A$ are the constant maps.
    The abelian variety $A$ cannot contain an absolutely irreducible curve of geometric genus $1$ as well.
    Indeed, the normalisation of such a curve would be an elliptic curve (see for example \cite[Prop.~1]{Haloui17}) and hence an isogeny factor of $A$. 
    % by Milne's notes - Cor 3.6, the rational map E-->A is the composition of a homomorphism and a translation; hence E is an isogeny factor of A.
    Similarly, $A$ cannot contain an absolutely irreducible curve $D$ of geometric genus $2$ because the induced map $\tilde D\to A$ would give a morphism $\Jac(\tilde D) \to A$, which is an isogeny since $A$ is simple.

    We now focus on \ref{simple_nojac}$\Rightarrow$\ref{nopp_wr}.
    We distinguish two cases.
    Assume first that $A$ is not isogenous to a principally polarised abelian surface.
    This corresponds to \ref{not_isog_PP}. 
    The equivalence in terms of the coefficients of the Weil polynomial follows from \cite[Thm.~1]{HMNR08}.
    Assume now that $A$ is isogenous to a principally polarised abelian surface.
    By a classification theorem due to Weil (see for instance \cite[Thm.~1.3]{HNR09}), a principally polarised abelian surface defined over $\Fq$ is exactly one of the following: a product of two elliptic curves defined over $\Fq$ with the product polarisation; the Jacobian of an absolutely irreducible genus $2$ curve defined over $\Fq$ with the canonical polarisation; the Weil restriction of an elliptic curve defined over the quadratic extension of $\Fq$ with the induced polarisation. 
    Since we are assuming \ref{simple_nojac}, we exclude the first two cases.
    So, $A$ is the Weil restriction of an elliptic curve defined over the quadratic extension of $\Fq$.
    Finally, the characterisation of the coefficients of the Weil polynomial follows from \cite[Prop.~4.3]{MYFE1} and the beginning of its proof.
    Hence, we are in case \ref{weil_restr}.

    We conclude by showing that \ref{nopp_wr}$\Rightarrow$\ref{simple_nojac}.
    For $A$ satisfying \ref{not_isog_PP} the implication is clear.
    If $A$ satisfies \ref{weil_restr} the result follows from \cite[Thm.~1.2-(2), Table~1.2]{HNR09}.
\end{proof}

\begin{remark}\label{rmk:gap}
    In \cite[Lemma~4.1]{MYFE1}, it is stated that for an abelian surface $A$ defined over $\Fq$ we have that the following statements are equivalent:
    \begin{enumerate}[(I)]
        \item\label{item_a} $A$ is simple and not isogenous to a Jacobian surface, that is, the Jacobian of an absolutely irreducible smooth genus $2$ curve;
        \item\label{item_b} $A$ does not contain absolutely irreducible curves of arithmetic genus $0$, $1$ or $2$;
        \item\label{item_c} $A$ does not contain absolutely irreducible smooth curves of genus $0$, $1$ or $2$.
    \end{enumerate}
    Observe that \ref{item_a}$=$\ref{simple_nojac} and that \ref{nocurves}$\Rightarrow$\ref{item_b}.
    Hence, the implication \ref{simple_nojac}$\Rightarrow$\ref{nocurves} of Theorem \ref{thm:no_g1g2} is a stronger statement than the implication \ref{item_a}$\Rightarrow$\ref{item_b}.
    However, while our reverse implication \ref{nocurves}$\Rightarrow$\ref{simple_nojac} holds true, the  implication \ref{item_b}$\Rightarrow$\ref{item_a} claimed in \cite[Lemma~4.1]{MYFE1} is false. 
    Indeed, consider a simple isogeny class containing a Jacobian surface and non-principally polarisable abelian surfaces. 
    Then, any 
    % isomorphism class of 
    abelian surface admitting no principal polarisation inside such an isogeny class gives an example of an abelian surface isogenous to a Jacobian and containing no $\Fq$-irreducible curves of arithmetic genus $2$ by Proposition~\ref{prop:pp_iff_pa2} above.
    % Indeed, using the same argument in the proof of Theorem~\ref{thm:pol_deg_4_iff_pa3} below, such a curve would define a polarisation of degree $1$, that is, a principal polarisation of the surface. 
    An example of such a class is given in Example \ref{ex:controesempio}.

    Finally, let us remark that the implication \ref{item_c}$\Rightarrow$\ref{item_b} of \cite[Lemma~4.1]{MYFE1} still holds true, but a new proof is necessary, that we offer here.
    Let us suppose that $A$ contains an absolutely irreducible curve of arithmetic genus $p_a=1$.
    Then the geometric genus is necessarily $1$ as well, since $g$ cannot be $0$ and $g\leq p_a$, thus the curve is smooth, a contradiction.
    Suppose now that  $A$ contains an absolutely irreducible curve of arithmetic genus $p_a=2$.
    If the geometric genus is $1$ then we know by \cite[Prop.~1]{Haloui17} that the curve is smooth and has a structure of an elliptic curve, which leads to a contradiction.
    If the geometric genus is $2$ then the curve is smooth of genus $2$, leading again to a contradiction.  
\end{remark}

\begin{remark}\label{rms:genus_2_invariant}
    We highlight that there is a crucial difference between considering the arithmetic genus and the geometric genus in our characterisation. On the one hand, Lemma \ref{lem:ggg} implies that if an abelian surface $A$ doesn't contain absolutely irreducible curves of geometric genus $\leq 3$ then every abelian variety isogenous to $A$ doesn't contain absolutely irreducible curves of geometric genus $\leq 3$. 
    On the other hand, this is not the case for absolutely irreducible curves of arithmetic genus $\leq 2$ as we have seen in Remark~\ref{rmk:gap}, or for absolutely irreducible curves of arithmetic genus $\leq 3$ as we will see using Theorem~\ref{thm:pol_deg_4_iff_pa3}.
\end{remark}

\begin{remark}\label{rmk:good_to_know_Weil_res} 
  Note that one can distinguish isogeny classes of abelian surfaces admitting no principal polarisation and simple isogeny classes of Weil restrictions by their splitting behavior.    From \cite[p.~122]{HMNR08}, we know that an abelian surface $A$ defined over $\Fq$ which is in a not principally polarisable isogeny class splits over the cubic extension of the base field, that is $A\otimes_{\Fq}\F_{q^3}$ is not simple.
    If $A$ is a simple abelian surface over $\F_q$ then $A$ is isogenous to a Weil restriction of an elliptic curve defined over $\F_{q^2}$ if and only if $A\otimes_{\Fq}\F_{q^2}$ is not simple; see \cite[Lemma~4]{HMNR08}.
\end{remark}

The rest of the section is devoted to prove various results about the Weil polynomials from Theorem \ref{thm:no_g1g2}.

\begin{lemma}\label{lemma:f_irred_almost}
    Let $f(t)$ be a Weil polynomial of an abelian surface satisfying one of the equivalent conditions of Theorem \ref{thm:no_g1g2}.
    Then either $f(t) = (t^2-q)^2$ with $q\in \{2,3\}$, or $f(t)$ is irreducible over $\Q[x]$.
\end{lemma}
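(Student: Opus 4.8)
The plan is to use that $A$ is $\Fq$-simple (part~\eqref{simple_nojac} of Theorem~\ref{thm:no_g1g2}) together with the explicit constraints on the coefficients $a,b$ from part~\eqref{nopp_wr}. Since $A$ is simple, a standard consequence of Honda--Tate theory is that its Weil polynomial is a power of an irreducible polynomial, $f(t)=g(t)^e$, where $g\in\Z[t]$ is monic irreducible (the minimal polynomial of the Frobenius Weil number) and $e\deg g=\deg f=4$; moreover every complex root of $f$, hence of $g$, has absolute value $\sqrt q$. Thus $(e,\deg g)\in\{(1,4),(2,2),(4,1)\}$, and since $(1,4)$ is precisely the alternative that $f$ is irreducible, I only need to show that $(4,1)$ cannot occur and that $(2,2)$ is possible only when $f(t)=(t^2-q)^2$ with $q\in\{2,3\}$.

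For $(4,1)$ one would have $g(t)=t-\pi$ with $\pi$ real of absolute value $\sqrt q$, so $q$ is a perfect square and $f(t)=(t\mp\sqrt q)^4$; expanding gives $a=\mp 4\sqrt q\neq 0$ and $b=6q>0$, which contradicts part~\eqref{nopp_wr}, as case~\eqref{not_isog_PP} forces $b<0$ and case~\eqref{weil_restr} forces $a=0$. For $(2,2)$ the irreducible degree-$2$ factor $g$ must be either $t^2-ct+q$ with $c\in\Z$ (conjugate roots) or $t^2-q$ with $q$ a non-square (real roots $\pm\sqrt q$). In the first subcase $f=g^2$ has $a=-2c$ and $b=c^2+2q>0$; this rules out case~\eqref{not_isog_PP} ($b<0$), and in case~\eqref{weil_restr} ($a=0$) it forces $c=0$ and $b=2q$, which is not among the admissible values of $b$ there (all of which are negative since $q\ge 2$) — so this subcase is impossible. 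In the second subcase $f(t)=(t^2-q)^2$, so $a=0$ and $b=-2q$: case~\eqref{not_isog_PP} is excluded since it would require $a^2-b=q$, i.e., $2q=q$; and in case~\eqref{weil_restr} a direct inspection of the list shows that for $q\ge 2$ one has $-2q\notin\{1-2q,\,2-2q,\,-q\}$, so the only possibilities left are $-2q=-4$ or $-2q=-6$, that is $q\in\{2,3\}$ (both non-squares, as required for $t^2-q$ to be irreducible). This yields the two cases of the statement.

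I do not expect a real obstacle: the single piece of theory used is the prime-power shape of the Weil polynomial of a simple abelian variety, and everything else is a finite check. The only points requiring care are the enumeration of the degree-$1$ and degree-$2$ irreducible factors of a Weil polynomial, and the comparison against the finitely many admissible pairs $(a,b)$ in part~\eqref{nopp_wr} of Theorem~\ref{thm:no_g1g2}; in particular, one sidesteps the dimension formula $2\dim A=[\End^{0}(A):\Q(\pi)]^{1/2}[\Q(\pi):\Q]$, since the coefficient conditions already exclude $\deg g=1$ on their own.
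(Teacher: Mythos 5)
Your proof is correct and takes essentially the same approach as the paper: reduce via simplicity to a short list of possible shapes for $f$, then compare the resulting coefficients $(a,b)$ against the constraints of Theorem~\ref{thm:no_g1g2}.\eqref{nopp_wr}. The only cosmetic difference is that the paper implicitly discards the $(t-c)^4$ case via the Honda--Tate dimension formula (the associated simple variety has dimension $1$), while you rule it out directly by noting that its coefficients $a=\mp 4\sqrt q$ and $b=6q$ violate both branches of condition \eqref{nopp_wr}.
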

\begin{proof}
    Since $f(t)$ is the Weil polynomial of a simple abelian surface then either it is irreducible or it is the square of a quadratic polynomial.
    Assume that $f(t)$ is reducible.
    Then either $f(t)=(t^2-q)^2$ or $f(t)=(t^2-\beta t+q)^2$ for some $\beta\in\Z$.
    % A priori we could also have $(t+\sqrt{q})^4$ and $(t-\sqrt{q})^4$ but they are non-simple.
    If we are the in the former case, then by comparing with Theorem~\ref{thm:no_g1g2}, we see that $q\in\{2,3\}$, and we are done.
    So assume that we are in the latter case.
    Then $a=-2\beta$ and $b=2q+\beta^2$.
    Observe that $b>0$ which implies that $f(t)$ does not satisfy condition \ref{not_isog_PP}.
    So we must be in case \ref{weil_restr}.
    Hence, $a=0$ which implies that $\beta=0$ and $b=2q$, giving a contradiction with the constraints on $b$ from \ref{weil_restr}.
    Therefore, $f(t)$ is irreducible.
\end{proof}

In view of Lemma~\ref{lemma:f_irred_almost}, the set of Weil polynomials described in Theorem~\ref{thm:no_g1g2} can be partitioned as follows.

\begin{definition}\label{df:partition}
    Let $\PirrA$ (resp.~$\PirrB$) be the set of Weil polynomials described in Theorem~\ref{thm:no_g1g2}.\ref{not_isog_PP} (resp.~Theorem~\ref{thm:no_g1g2}.\ref{weil_restr}) which are irreducible over $\Q[x]$.
    The set of Weil polynomials described in Theorem~\ref{thm:no_g1g2} is partitioned as follows:
    \[ \PirrA\sqcup\PirrB \sqcup  \{ (t^2-2)^2 , (t^2-3)^2 \}.\]
\end{definition}

After having introduced this notation, we are ready to complete the proof of Main Theorem~\ref{mainthm0}.
\begin{corollary}\label{cor:mainthm0}
    Let $f(t)$ be a Weil polynomial defining a simple isogeny class $\cC$.
    \begin{enumerate}[(i)]
        \item \label{cor:mainthm0:Pnpp} $f(t)$ is in $\PirrA$ if and only if no $A$ in $\cC$ contains an $\Fq$-irreducible curve with arithmetic genus $\leq 2$.
        \item \label{cor:mainthm0:PWres} $f(t)$ is in $\{ (t^2-2)^2 , (t^2-3)^2 \} \sqcup\PirrB$ if and only if no $A$ in $\cC$ contains an absolutely irreducible curve with geometric genus $\leq 2$ and there exists a $B$ in $\cC$ which contains an $\Fq$-irreducible curve with arithmetic genus $2$.
    \end{enumerate}
\end{corollary}
\begin{proof}
    Since $\cC$ is simple, no $A$ in $\cC$ can contain an $\Fq$-irreducible curve of arithmetic genus $1$.
    Indeed, any such curve would have geometric genus $1$, that is, it would be an elliptic curve which is implies that $A$ is isogenous to the product of two elliptic curves.
    Hence, Part~\ref{cor:mainthm0:Pnpp} follows immediately from Proposition~\ref{prop:pp_iff_pa2}.

    Now, we prove Part~\ref{cor:mainthm0:PWres}. 
    Assume first that $f(t)$ is in $\{ (t^2-2)^2 , (t^2-3)^2 \} \sqcup\PirrB$ and let $A$ be an abelian surface in $\cC$.
    Then, $A$ does not contain absolutely irreducible curves of geometric genus $\leq 2$ by Theorem~\ref{thm:no_g1g2}.
    Moreover, there exists an abelian surface in $\cC$ which is principally polarised and hence contains an $\Fq$-irreducible curve $C$ of arithmetic genus $2$ by Proposition~\ref{prop:pp_iff_pa2}.
    Note that this curve $C$ has geometric genus $\leq 2$, so it is not absolutely irreducible because of the assumptions on $A$.
    
    For the converse implication, we first observe that $f(t)$ is in $\{ (t^2-2)^2 , (t^2-3)^2 \} \sqcup\PirrB \sqcup\PirrA$ by Theorem~\ref{thm:no_g1g2}. 
    By Proposition~\ref{prop:pp_iff_pa2}, we have that $B$ is principally polarised and thus $f(t) \not\in \PirrA$.
\end{proof}

Recall that if $A$ is an abelian variety over $\Fq$ whose Weil polynomial $f(t)$ has complex roots $\alpha_1\ldots,\alpha_{2g}$, then the Weil polynomial of the extension of scalars $A_{\F_{q^n}}=A \otimes_{\Fq} \F_{q^n}$ has complex roots $\alpha_1^n,\ldots,\alpha_{2g}^n$.
This observation, combined with the next result, gives us an effective way to test whether an irreducible Weil polynomial belongs to $\PirrB$.

\begin{lemma}\label{lemma:PirrB_char}
    Let $A$ be an abelian surface over $\Fq$ with Weil polynomial $f(t)=t^4+at^3+bt^2+qat+q^2$.
    Then $f(t) \in \PirrB$ if and only if $f(t)$ is irreducible, the Weil polynomial
    of $A_{\F_{q^2}}$ is not irreducible, and $b$ is as in Theorem~\ref{thm:no_g1g2}.\ref{weil_restr}.
\end{lemma}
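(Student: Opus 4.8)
The plan is to prove Lemma~\ref{lemma:PirrB_char} by unwinding the definitions of the sets introduced in Definition~\ref{df:partition} and translating the geometric condition ``isogenous to a Weil restriction'' into the arithmetic conditions already collected in Theorem~\ref{thm:no_g1g2}.\eqref{weil_restr}. Recall from Definition~\ref{df:partition} that $\PirrB$ consists precisely of those Weil polynomials described in Theorem~\ref{thm:no_g1g2}.\eqref{weil_restr} that are irreducible over $\Q[x]$. So $f(t)\in\PirrB$ means: $f(t)$ is irreducible, $A$ is isogenous to a Weil restriction of an elliptic curve over $\F_{q^2}$ (equivalently, by \cite[Lemma~4]{HMNR08}, $A_{\F_{q^2}}$ is not simple), and the coefficient $b$ lies in the explicit list of Theorem~\ref{thm:no_g1g2}.\eqref{weil_restr}. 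The claim of the lemma is that this is equivalent to: $f(t)$ irreducible, the Weil polynomial of $A_{\F_{q^2}}$ is \emph{not irreducible}, and $b$ as in that list. The only point that is not purely a matter of rephrasing is the equivalence, under the standing hypotheses, between ``$A_{\F_{q^2}}$ is not simple'' and ``the Weil polynomial of $A_{\F_{q^2}}$ is not irreducible''.

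First I would dispose of the easy direction. If $f(t)\in\PirrB$, then $f(t)$ is irreducible and $b$ is as in Theorem~\ref{thm:no_g1g2}.\eqref{weil_restr} by definition, and $A_{\F_{q^2}}$ is not simple by \cite[Lemma~4]{HMNR08}; since a non-simple abelian variety has reducible Weil polynomial (the Weil polynomial of a product is the product of the Weil polynomials of the factors), the Weil polynomial of $A_{\F_{q^2}}$ is not irreducible. Conversely, suppose $f(t)$ is irreducible, the Weil polynomial $g(t)$ of $A_{\F_{q^2}}$ is not irreducible, and $b$ is as in the list. Since $f$ is irreducible and $b$ matches the list, $A$ satisfies the equivalent conditions of Theorem~\ref{thm:no_g1g2} (it lands in case \eqref{weil_restr} rather than \eqref{not_isog_PP}, because the values of $b$ in \eqref{weil_restr} are incompatible with the condition ``$a^2-b=q$, $b<0$'' of \eqref{not_isog_PP} --- here one uses $a=0$, forced by \eqref{weil_restr}, so $a^2-b=-b$, and one checks case by case that $-b\neq q$; alternatively the two lists in Theorem~\ref{thm:no_g1g2}.\eqref{nopp_wr} are declared to be mutually exclusive by the word ``exactly''). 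So by Theorem~\ref{thm:no_g1g2}, $A$ is isogenous to a Weil restriction of an elliptic curve over $\F_{q^2}$, hence $A_{\F_{q^2}}$ is not simple, hence $f(t)\in\PirrB$ by definition.

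The one genuine obstacle --- and I expect it to be minor --- is making sure there is no gap between ``$A_{\F_{q^2}}$ has reducible Weil polynomial'' and ``$A_{\F_{q^2}}$ is not simple''. In general a simple abelian variety can have reducible Weil polynomial: the Weil polynomial of a simple abelian variety is a power $h(t)^e$ of an irreducible polynomial $h(t)$. So a priori $A_{\F_{q^2}}$ could be simple with Weil polynomial $h(t)^2$, $\deg h=2$, i.e. $g(t)=(t^2-\beta t + q^2)^2$ for some $\beta\in\Z$. The fix is to rule this out using the explicit constraints. Using $a=0$ (forced in case \eqref{weil_restr}), the roots of $f$ are $\pm\alpha,\pm\bar\alpha$ with $\alpha\bar\alpha=q$, and the roots of $g$ are their squares $\alpha^2,\bar\alpha^2$ each with multiplicity $2$. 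If $g=(t^2-\beta t+q^2)^2$ were the Weil polynomial of a simple surface over $\F_{q^2}$, then by the Honda--Tate classification of such surfaces (a supersingular case), $q^2$ would be a square — which it is — and moreover $\beta=\alpha^2+\bar\alpha^2 = (\alpha+\bar\alpha)^2 - 2q = -2q$ (since $\alpha+\bar\alpha=0$ as $a=0$), giving $g=(t^2+2q\,t+q^2)^2=(t+q)^4$, corresponding to $A_{\F_{q^2}}$ isogenous to the fourth power of an elliptic curve — in particular \emph{not} simple. More directly: when $a=0$, one has $g(t)=t^4+(2b-2q^2? )\cdots$; I would simply compute $g(t)$ explicitly from $f(t)=t^4+bt^2+q^2$ (its roots squared) and observe that for each of the allowed values of $b$ in the list, $g(t)$ visibly factors into lower-degree Weil polynomials over $\Z$, and is never the square of an \emph{irreducible} quadratic that is the Weil polynomial of a simple surface. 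Since in all these cases reducibility of $g$ therefore forces $A_{\F_{q^2}}$ non-simple, the equivalence follows, completing the proof.
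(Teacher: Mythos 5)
Your strategy matches the paper's one-line proof (``It follows from Theorem~\ref{thm:no_g1g2}.\eqref{weil_restr}''), and you are right to flag the step that this citation leaves implicit: the hypothesis gives that the Weil polynomial $g(t)$ of $A_{\F_{q^2}}$ is \emph{reducible}, but what one must actually conclude is that $A_{\F_{q^2}}$ is \emph{not simple}, and a simple abelian surface can have a reducible Weil polynomial of the form $h(t)^2$. Your closing sketch --- compute $g(t)$ and check, for each allowed $b$, that the quadratic factor is an elliptic Weil polynomial over $\F_{q^2}$, so that $A_{\F_{q^2}}\sim E^2$ --- is the right way to close this gap. Concretely, once one knows $a=0$, one finds $g(t)=(t^2+bt+q^2)^2$, and $t^2+bt+q^2=t^2-\beta t+q^2$ is the characteristic polynomial of an elliptic curve over $\F_{q^2}$ for every $b$ in the list compatible with $f$ irreducible: ordinary ($\gcd(\beta,p)=1$) when $b\in\{1-2q,2-2q\}$, and supersingular ($\beta=\pm\sqrt{q^2}$ with the required congruences on $p$) when $b=-q$; the cases $q=2,b=-4$ and $q=3,b=-6$ force $f=(t^2-q)^2$ reducible, so they do not arise.

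Two things in the write-up, however, are not correct as stated. First, the converse direction opens circularly: you assert that ``since $f$ is irreducible and $b$ matches the list, $A$ satisfies the equivalent conditions of Theorem~\ref{thm:no_g1g2}'' and appeal to ``$a=0$, forced by \eqref{weil_restr}.'' But landing in case~\eqref{weil_restr} is precisely the conclusion you are after, and $a=0$ is not among the hypotheses of the lemma; you must derive it. This is easy: if $f$ is irreducible with roots $\alpha_i$ and $g(t)=\prod_i(t-\alpha_i^2)$ is reducible, then $[\Q(\alpha_1^2):\Q]\leq 2$, so $[\Q(\alpha_1):\Q(\alpha_1^2)]=2$ and $-\alpha_1$ is a $\Q$-conjugate of $\alpha_1$, hence a root of $f$; irreducibility then gives $f(-t)=f(t)$, that is, $a=0$. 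Second, your explicit computation is wrong: from $a=0$ one only learns that the four roots $\pm\alpha,\pm\overline\alpha$ sum to zero, which is vacuous, \emph{not} that $\alpha+\overline\alpha=0$. Consequently $\alpha^2+\overline\alpha^2=-b$ (the negative of the $t^2$-coefficient of $f$), not $-2q$, and the quadratic factor of $g$ is $t^2+bt+q^2$, not $(t+q)^2$; the placeholder ``$g(t)=t^4+(2b-2q^2?)\cdots$'' should simply read $g(t)=(t^2+bt+q^2)^2$. With these two repairs your argument becomes correct and supplies the detail the paper omits.
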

\begin{proof}
    It follows from Theorem \ref{thm:no_g1g2}.\ref{weil_restr} and Remark~\ref{rmk:good_to_know_Weil_res}.
\end{proof}

For the rest of the section we focus on Weil polynomials belonging to $\PirrA\sqcup\PirrB$.

\begin{lemma}\label{lem:Pirrordinary}
    Let $A$ be an abelian surface over $\Fq$ with Weil polynomial $f(t)=t^4+at^3+bt^2+qat+q^2$ in $\PirrA\sqcup\PirrB$. Then $A$ is either ordinary or supersingular. 
    If $f(t)\in\PirrA$ then $A$ is ordinary if and only if $a\neq 0$. If $f(t)\in\PirrB$, then $A$ is ordinary if and only if $b = 1-2q$ or, $b=2-2q$ and $p>2$.
\end{lemma}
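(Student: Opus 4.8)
The statement has three parts: first, that $A$ is either ordinary or supersingular; second, the ordinarity criterion for $f(t)\in\PirrA$; third, the ordinarity criterion for $f(t)\in\PirrB$. The plan is to work directly with the Newton polygon of $f(t)$, equivalently with the $p$-adic valuations of the coefficients $a$ and $b$, since for an abelian surface the possible Newton polygons are exactly the ordinary one (slopes $0,0,1,1$), the supersingular one (slopes $1/2,1/2,1/2,1/2$), and the "mixed" one with $p$-rank $1$ (slopes $0,1/2,1/2,1$). Being ordinary means $p\nmid b$; being supersingular means all roots have slope $1/2$; the mixed case is $p\mid b$ but $p\nmid a$, which happens precisely when $v_p(b)\geq 1$ and $v_p(a)=0$.

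First I would dispose of the mixed case. Recall from Theorem~\ref{thm:no_g1g2} that if $f(t)\in\PirrB$ then $a=0$, so there is nothing to rule out a priori; and if $f(t)\in\PirrA$ then $a^2-b=q$, so $b=a^2-q$. In the $\PirrA$ case, if $p\mid b$ then $p\mid a^2-q$; but $\PirrA$ also requires all prime divisors of $b$ to be congruent to $1\bmod 3$, and in particular $b<0$, so if $a=0$ then $b=-q$, whose prime divisors are those of $p=q$ or... — more cleanly, in the mixed case one would have $p\nmid a$, hence $a\neq 0$, hence (by Lemma~\ref{lem:Pirrordinary}'s own statement, or rather the fact I am proving) I should instead argue: if $a\neq 0$ and $p\mid b$, then $f(t)$ reduces mod $p$ to $t^4+at^3 = t^3(t+a)$, which has a simple root $t=-a\neq 0$ and a triple root $t=0$; a simple nonzero root contributes slope $0$ and a corresponding slope-$1$ root by the functional equation, but the triple root at $0$ would force the remaining Newton slopes to be... — this is where the constraint $a^2-b=q$ (in $\PirrA$) or $a=0$ (in $\PirrB$) must intervene to prevent $p$-rank exactly $1$. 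Concretely, $p$-rank $1$ for a surface is impossible when the isogeny class is simple and the Newton polygon must be symmetric with integer break points; I would instead cite the standard fact that a simple abelian surface over $\Fq$ has $p$-rank $0$ or $2$, hence is supersingular or ordinary (this follows e.g. because $p$-rank $1$ forces an elliptic isogeny factor). This is the cleanest route and it immediately gives the first assertion, using that all $f(t)\in\PirrA\sqcup\PirrB$ are irreducible, hence simple.

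Next, for $f(t)\in\PirrA$: ordinary $\iff p\nmid b$. Since $b=a^2-q$, we have $p\mid b\iff p\mid a^2\iff p\mid a$. So ordinary $\iff p\nmid a$. It remains to upgrade $p\nmid a$ to $a\neq 0$: if $a=0$ then trivially $p\mid a$ so $A$ is supersingular; conversely if $a\neq 0$ I must check $p\nmid a$. Here I would use the supersingularity constraints: a supersingular abelian surface has all Frobenius eigenvalues of the form $\sqrt{q}\cdot\zeta$ with $\zeta$ a root of unity, which forces $a$ to lie in a short explicit list (e.g. $a\in\{0,\pm 1,\pm 2, \ldots\}$ bounded, together with congruence conditions on $q$), and one checks these are incompatible with the $\PirrA$ constraints $a^2-b=q$, $b<0$, prime divisors of $b$ all $\equiv 1\bmod 3$ unless $a=0$. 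Alternatively, and more robustly, the non-$p$-rank-$1$ argument already shows $A$ is supersingular iff $p\mid b$ iff $p\mid a$; then I invoke that a supersingular surface with $a\neq 0$ would have $p\mid a$ and $p^2\mid b=a^2-q$, forcing $p\mid q$, fine, but then $v_p$-analysis of the Newton polygon of $t^4+at^3+bt^2+qat+q^2$ with $v_p(a)\geq 1$, $v_p(q)=\text{even}$, shows the polygon is supersingular only if $v_p(a)\geq v_p(q)/2$, and combining with the arithmetic of $\PirrA$ pins down $a=0$. I expect this case-chase to be short once the mixed case is excluded.

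Finally, for $f(t)\in\PirrB$, so $f(t)=t^4+bt^2+q^2$ and $b$ is one of the seven values in Theorem~\ref{thm:no_g1g2}.\eqref{weil_restr}. Here ordinary $\iff p\nmid b$, and I would simply run through the seven cases: for $b=1-2q$ we get $b\equiv 1\pmod p$, never divisible by $p$, so ordinary; for $b=2-2q$ we get $b\equiv 2\pmod p$, so $p\nmid b\iff p\neq 2\iff p>2$, matching the claim (and when $p=2$, $b=2-2q$ is even, supersingular); for the four cases with $b=-q$ we have $p\mid b$, so non-ordinary (supersingular), consistent with the claim since none of those four is "$b=1-2q$" or "$b=2-2q$ with $p>2$"; similarly $q=2,b=-4$ and $q=3,b=-6$ both have $p\mid b$, non-ordinary. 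So ordinary occurs exactly in the two claimed sub-cases. The only subtlety is confirming that in every non-ordinary case the surface is genuinely supersingular rather than $p$-rank $1$ — but that is precisely the first assertion of the lemma, already established.

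\medskip

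The main obstacle I anticipate is the first assertion: cleanly ruling out $p$-rank exactly $1$. The slick argument is "a simple abelian surface has $p$-rank $\neq 1$", which is standard but I should either cite it or reprove it (if $p$-rank is $1$, the local-local part of the $p$-divisible group has height $2$ and the étale-multiplicative part splits off a $1$-dimensional abelian variety up to isogeny, i.e. an elliptic curve, contradicting simplicity — this needs the Dieudonné-module / Manin classification, or can be seen from the Newton polygon having an integral break point at $(1,1)$ which, for a simple isogeny class, is impossible because it would make $f(t)$ factor $p$-adically in a way incompatible with irreducibility over $\Q$... actually this last needs care). Given that irreducibility of $f(t)$ over $\Q$ is already guaranteed by Lemma~\ref{lemma:f_irred_almost}, the cleanest statement to invoke is: the Newton polygon of a $p$-adically irreducible factor has a single slope; since $f(t)$ is $\Q$-irreducible, over $\Q_p$ it is a product of irreducible factors of equal degree with Galois-conjugate slopes, and the symmetry $\alpha\mapsto q/\alpha$ of Weil numbers then forces the slope multiset to be $\{0,0,1,1\}$, $\{1/2,1/2,1/2,1/2\}$, or $\{0,1,0,1\}=\{0,0,1,1\}$ again — in any case $p$-rank $0$ or $2$. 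I would spend a sentence or two making this precise and otherwise keep the proof to the coefficient case-analysis above.
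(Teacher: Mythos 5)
Your plan runs into a genuine obstacle at the very first step, and you sensed it yourself: the ``standard fact'' that a simple abelian surface over $\Fq$ has $p$-rank $0$ or $2$ is \emph{false}. Simple abelian surfaces of $p$-rank $1$ exist in abundance (these are the ``almost-ordinary'' surfaces, precisely the ones referenced in this paper's citation of Oswal--Shankar). Your proposed justification---that the \'etale-multiplicative part of the $p$-divisible group ``splits off a $1$-dimensional abelian variety up to isogeny''---conflates a decomposition of the $p$-divisible group (which does split into \'etale, multiplicative and local-local pieces) with a decomposition of the abelian variety up to isogeny; no such descent exists. Your fallback Newton-polygon argument also fails to close the gap: even granting that $K/\Q$ is Galois (Proposition~\ref{prop:galois}), so that the $\Q_p$-irreducible factors of $f$ have equal degree $ef$, the case $ef=1$ (i.e.\ $p$ totally split in $K$) still permits the slope multiset $\{0,\tfrac12,\tfrac12,1\}$, so $p$-rank $1$ is not excluded by Galois-theoretic symmetry alone. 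What actually rules it out in the paper is specific to $\PirrA\sqcup\PirrB$: for $f\in\PirrA$ the isogeny class is not principally polarisable, while a $p$-rank-$1$ isogeny class of surfaces always is (this is the cited~\cite[Thm.~4.3]{MN02}); for $f\in\PirrB$, $A$ is a Weil restriction of an elliptic curve $E/\F_{q^2}$, so if $A$ is non-ordinary then $E$ is supersingular and hence $A$ is supersingular as well.

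There is a second, smaller gap in the $\PirrA$ direction. You correctly reduce ``ordinary'' to ``$p\nmid a$'' via $b=a^2-q$, but the lemma's claim is ``ordinary $\iff a\neq 0$'', and you leave the implication $a\neq 0 \Rightarrow p\nmid a$ as a sketched Newton-polygon case-chase that you do not carry through. The paper settles this by citing~\cite[Thm.~2]{HMNR08}: a non-principally-polarisable abelian surface with $a\neq 0$ cannot be supersingular, and with $p$-rank $1$ already excluded, ordinariness follows. Your treatment of the $\PirrB$ coefficient list is correct and matches the paper, but it depends on the first assertion, which is where the proof needs repair.
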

\begin{proof}
    Assume that $f(t)\in\PirrA$.  Note that $A$ cannot have $p$-rank $1$ since in that case it would be principally polarisable by \cite[Thm.~4.3]{MN02}\footnote{The authors of \cite{MN02} call the neither ordinary nor supersingular case ``mixed''.}. 
    If $a=0$ then we have $b=a^2-q=-q$, and therefore $A$ is supersingular. 
    Suppose $a\neq 0$. By \cite[Thm.~2]{HMNR08}, the abelian variety $A$ cannot be supersingular. Hence, $A$ is ordinary.  
    Now, assume that $f(t)\in\PirrB$. 
    Obviously if $b = 1-2q$, or $b=2-2q$ and $p>2$ then $A$ is ordinary. 
    By Theorem  \ref{thm:no_g1g2}.\ref{weil_restr}, those are the only possible values for $b$ to have ordinarity.
    To conclude the proof, we need to show that if $A$ is non-ordinary then it is supersingular.
    Say that $A$ is the Weil restriction of an elliptic curve $E$ over $\F_{q^2}$. 
    If $A$ is non-ordinary, then $E$ is supersingular, and hence $A$ is also supersingular.
\end{proof}

\begin{proposition}\label{prop:galois}
    Let $f(t)=t^4+at^3+bt^2+qat+q^2$ be a Weil polynomial in $\PirrA\sqcup\PirrB$.
    Then, the number field $K=\Q[t]/f(t)$ is Galois.
\end{proposition}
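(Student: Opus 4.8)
The plan is to exploit the CM structure of $K$. Since $f(t)\in\PirrA\sqcup\PirrB$ it is irreducible over $\Q$, so $[K:\Q]=4$; moreover, being a Weil polynomial, its complex roots pair up as $\alpha\leftrightarrow q/\alpha$, so $K$ is a CM field with totally real quadratic subfield $K^+=\Q(\alpha+q/\alpha)$. Over $K^+$ we have the factorisation
\[
f(t)=(t^2-st+q)(t^2-s't+q),
\]
where $s,s'$ are the two roots of $h(x)=x^2+ax+(b-2q)$ (indeed $s+s'=-a$ and $ss'=b-2q$), and $h$ is the minimal polynomial of a generator of $K^+$. Fixing a root $\alpha$ of the first factor gives $K=\Q(\alpha)=K^+(\alpha)=K^+(\sqrt\delta)$ with $\delta:=s^2-4q\in K^+$. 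Because $f$ — hence $h$ — is irreducible, $h$ has no rational root and no root equal to $\pm2\sqrt q$, so under both real embeddings of $K^+$ one has $s^2<4q$; that is, $\delta$, and likewise its conjugate, is totally negative (in particular nonzero).

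Next I would invoke the standard criterion for $K=K^+(\sqrt\delta)$ to be normal over $\Q$. Writing $\sigma$ for the nontrivial automorphism of $K^+/\Q$, the roots of $f$ are $\frac{s\pm\sqrt\delta}{2}$ and $\frac{s'\pm\sqrt{\sigma\delta}}{2}$, so the splitting field of $f$ over $\Q$ is $K^+(\sqrt\delta,\sqrt{\sigma\delta})$, and $K/\Q$ is Galois if and only if $\sqrt{\sigma\delta}\in K$. Since $\sigma\delta$ is totally negative it is not a square in the totally real field $K^+$, and a short Kummer computation (write $\sqrt{\sigma\delta}=u+v\sqrt\delta$ with $u,v\in K^+$ and compare $K^+$-components of the square) then shows that $\sqrt{\sigma\delta}\in K$ if and only if $\delta\cdot\sigma\delta\in(K^+)^{\times2}$. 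As $\delta\sigma\delta=N_{K^+/\Q}(\delta)\in\Q$, this is the question of whether one specific rational number is a square.

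It remains to compute $\delta\sigma\delta$. Using $s+s'=-a$ and $ss'=b-2q$, a symmetric-function calculation gives
\[
\delta\,\sigma\delta=(s^2-4q)(s'^2-4q)=(b+2q)^2-4qa^2 .
\]
If $f(t)\in\PirrA$ then $a^2-b=q$, so $b+2q=a^2+q$ and the right-hand side equals $(a^2+q)^2-4qa^2=(a^2-q)^2=b^2$. If $f(t)\in\PirrB$ then $a=0$, so the right-hand side is simply $(b+2q)^2$. In either case $\delta\sigma\delta$ is the square of a nonzero rational integer, hence lies in $(K^+)^{\times2}$, and therefore $K/\Q$ is Galois.

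The main obstacle is really just careful bookkeeping in the reduction: verifying that $\delta$ and $\sigma\delta$ are totally negative (which eliminates the degenerate Kummer case and is where the irreducibility of $f$ enters) and getting the symmetric-function identity for $\delta\sigma\delta$ right; the rest is formal Galois theory. For $\PirrB$ there is an even shorter route that avoids the reduction altogether: when $a=0$ the polynomial $f(t)=t^4+bt^2+q^2$ visibly has $\pm\alpha$ and $\pm q/\alpha$ among its roots, and these four are pairwise distinct (a coincidence among them would force $b=\pm2q$, contradicting the irreducibility of $f$, respectively $b<0$), so they are all the roots of $f$ and all lie in $\Q(\alpha)=K$; thus $K$ is already the splitting field of $f$. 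One may therefore treat $\PirrB$ in this elementary way and use the norm computation only for $\PirrA$.
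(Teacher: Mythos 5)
Your proof is correct but takes a genuinely different route from the paper's. The paper handles the two families asymmetrically: for $f\in\PirrB$ (and the non-ordinary $\PirrA$ cases), where $a=0$, it observes directly that the map $t\mapsto -t$ pairs the roots, so all four roots lie in $\Q(\alpha)$; for ordinary $f\in\PirrA$ (which by Lemma~\ref{lem:Pirrordinary} is exactly $a\neq0$) it simply cites \cite[Lemma~12.1]{Howe95}, which asserts Galois-ness for ordinary non-principally-polarisable surfaces. You instead give a single unified, self-contained argument: factor $f$ over $K^+$ as $(t^2-st+q)(t^2-s't+q)$, write $K=K^+(\sqrt\delta)$ with $\delta=s^2-4q$ totally negative, reduce Galois-ness of $K/\Q$ by Kummer theory to the question of whether $\delta\sigma\delta=(b+2q)^2-4qa^2$ is a square in $K^+$, and then verify it equals $b^2$ on $\PirrA$ (using $a^2-b=q$) and $(b+2q)^2$ on $\PirrB$ (using $a=0$). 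This is more computation but buys you independence from Howe's lemma and makes explicit the role of the defining relations on $a,b$; the paper's version is shorter at the cost of an external reference. One small point of hygiene: your justification that $s^2<4q$ under both real embeddings is phrased a bit loosely (``$h$ has no rational root and no root equal to $\pm2\sqrt q$''); the clean reason is that $f$ is an irreducible Weil polynomial, so every root $\alpha$ is non-real of modulus $\sqrt q$, forcing the discriminant $s^2-4q$ of each quadratic factor to be negative in every embedding. Your closing shortcut for $\PirrB$ (roots come in $\pm$ pairs, so $K$ is already the splitting field) is precisely the paper's argument for the $a=0$ case.
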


\begin{proof}
    Let $\alpha, q/\alpha, \beta, q/\beta$ be the complex roots of $f(t)$.
    Then $-a=\alpha+q/\alpha+\beta+q/\beta$.
    If $a=0$ then either $\alpha=-\beta$ or $\alpha=-q/\beta$.
    % We have $(\alpha + q/\alpha) + (\beta + q/\beta) = 0$. Clearing out denominators, one gets $(\alpha+\beta)(\alpha\beta+q)=0$.
    In both cases $\beta\in\Q(\alpha)$, so the extension is normal and $K$ is Galois. 
    If $a\neq 0$ then $f(t)$ is in $\PirrA$ and $A$ is ordinary by Lemma \ref{lem:Pirrordinary}.
    Hence $K$ is Galois by \cite[Lemma 12.1]{Howe95}.
\end{proof}

\begin{lemma}\label{lemma:splitting_mod_2}
    Every polynomial $f(t) \in \PirrA\sqcup\PirrB$ is congruent to the product of two quadratic polynomials modulo $2$.
\end{lemma}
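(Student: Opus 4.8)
The plan is to reduce $f$ modulo $2$ and read off the resulting quartic in $\F_2[t]$ directly; since there are only finitely many monic quartics over $\F_2$ and in each relevant case one can simply write down a factorisation into two quadratics, this settles the statement. I would organise the computation by the parity of $q$.

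If $q$ is even, then $q^2 \equiv 0$ and $qa \equiv 0 \pmod 2$, so $\overline f(t) = t^4 + \overline a\, t^3 + \overline b\, t^2 = t^2\,(t^2 + \overline a\, t + \overline b)$ is already a product of two quadratics, and here membership in $\PirrA\sqcup\PirrB$ plays no role. The substantive case is $q$ odd, where $\overline{q^2}=1$, $\overline{qa}=\overline a$, and hence $\overline f(t) = t^4 + \overline a\, t^3 + \overline b\, t^2 + \overline a\, t + 1$. If $f \in \PirrB$ then $a = 0$ by Theorem~\ref{thm:no_g1g2}.\eqref{weil_restr}, so $\overline f(t) = t^4 + \overline b\, t^2 + 1 = (t^2 + \overline b\, t + 1)^2$, using that squaring is the identity on $\F_2$ (so the cross terms of the square vanish). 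If instead $f \in \PirrA$, the condition in Theorem~\ref{thm:no_g1g2}.\eqref{not_isog_PP} that every prime divisor of $b$ is congruent to $1 \bmod 3$ forces $2 \nmid b$, hence $\overline b = 1$; reducing $b = a^2 - q$ modulo $2$ and using $\overline a^2 = \overline a$ gives $1 = \overline a + 1$ in $\F_2$, so $\overline a = 0$ and $\overline f(t) = t^4 + t^2 + 1 = (t^2 + t + 1)^2$. In every case $\overline f$ is a product of two quadratics; recall also from Definition~\ref{df:partition} that the degenerate squares $(t^2-2)^2$ and $(t^2-3)^2$ lie outside $\PirrA\sqcup\PirrB$, so there is nothing more to check.

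The only place the hypotheses are used in an essential way — and the point I expect to require the most care — is the observation that the arithmetic side condition on $b$ in the definition of $\PirrA$ secretly forces $b$ to be odd; without it, $\overline f$ could be the irreducible quartic $t^4 + t^3 + t^2 + t + 1$ over $\F_2$ (which occurs exactly when $\overline a = \overline b = \overline q = 1$) and the statement would fail. The remaining work is routine bookkeeping: tracking the reductions of $q^2$ and $qa$ modulo $2$, using $a^2 \equiv a \pmod 2$, and remembering that $a = 0$ throughout $\PirrB$.
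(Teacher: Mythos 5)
Your proof is correct and takes essentially the same route as the paper: in both, the decisive observation is that the arithmetic constraint on $b$ in the definition of $\PirrA$ (all prime divisors $\equiv 1 \bmod 3$) forces $b$ to be odd, from which the parity of $a$ follows via $b = a^2 - q$, and the rest is direct computation modulo $2$. The only organizational difference is that you split by the parity of $q$ first (noting the $q$ even case is trivial since $t^2$ factors out unconditionally), whereas the paper splits by $\PirrA$ versus $\PirrB$ first; the substance is identical.
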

\begin{proof}
    If $f(t) \in \PirrB$ then it is an easy calculation.
    Assume now that $f(t) \in \PirrA$.
    As usual, write $f(t) = t^4+at^3+bt^2+aqt+q^2$.
    The statement follows from two remarks.
    Firstly, since all prime divisors of $b$ are $\equiv 1 \mod 3$, $b$ must be odd.
    Secondly, since $b=a^2-q$, if $q$ is even then we must have $a^2$ (and thus $a$) odd, while if $q$ is odd we must have $a^2$ (and thus $a$) even.
    Therefore, an easy calculation shows that if $q$ is even then $f(t)\equiv t^2(t^2+t+1) \mod 2$, and that if $q$ is odd then $f(t)\equiv (t^2+t+1)^2 \mod 2$.
\end{proof}

\section{Abelian surfaces containing genus \texorpdfstring{$3$}{3} curves}\label{sec:g3onsurf}
The key question of the paper is characterising abelian surfaces which do not contain genus $3$ curves.
After a technical lemma, we prove Main Theorem~\ref{mainthm1}, stated below as Theorem~\ref{thm:pol_deg_4_iff_pa3}.
\begin{lemma}\label{lem:div4}
    Let $A$ be a simple abelian surface defined over $\Fq$. Let $D$ be an effective rational divisor on $A$ such that $D^2=4$. Then, $D$ is an $\Fq$-irreducible curve of arithmetic genus $3$. More precisely, $D$ is one of the following:
    \begin{itemize}
    \item an absolutely irreducible smooth curve of genus $3$;
    \item an absolutely irreducible curve of geometric genus $2$ with a double point;
    \item an $\Fq$-irreducible curve of arithmetic genus $3$ which is not absolutely irreducible.
    \end{itemize}
\end{lemma}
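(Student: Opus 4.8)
The plan is to use the adjunction formula together with the simplicity of $A$ to control the structure of $D$.

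First I would observe that since $A$ is simple, it contains no elliptic curves, so every $\Fq$-irreducible curve on $A$ has geometric genus at least $2$. Write $D = \sum_i n_i C_i$ with the $C_i$ distinct $\Fq$-irreducible curves and $n_i \geq 1$. Each $C_i$ has arithmetic genus $p_a(C_i) \geq$ geometric genus $\geq 2$, so by adjunction $C_i^2 = 2p_a(C_i) - 2 \geq 2$. Moreover, on an abelian surface any two distinct effective curves $C_i, C_j$ satisfy $C_i \cdot C_j \geq 0$ (distinct irreducible curves have non-negative intersection), and in fact if $C_i \cdot C_j = 0$ for all $j \neq i$ one can derive a contradiction with $A$ being simple via the fact that numerically, on an abelian surface, an effective curve has strictly positive self-intersection and the intersection form is non-degenerate on the image of the Néron--Severi group — a curve with $C_i\cdot C_j=0$ for all other components would have to generate its own abelian subvariety. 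The cleanest route: expand $D^2 = \sum_i n_i^2 C_i^2 + \sum_{i\neq j} n_i n_j (C_i\cdot C_j) = 4$, and use that every term is $\geq 0$ with each $C_i^2 \geq 2$. This forces $D$ to be a single curve: either one component with $n_1 = 1$ and $C_1^2 = 4$, or one component with $n_1 = 2$ and $C_1^2 = 1$ — but $C_1^2$ is even by adjunction (it equals $2p_a - 2$), so $C_1^2 = 1$ is impossible, and likewise two distinct components would give $D^2 \geq 2 + 2 = 4$ only if both self-intersections are exactly $2$ and the cross term vanishes, which I need to rule out.

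The main obstacle is precisely ruling out the case $D = C_1 + C_2$ with $C_1, C_2$ distinct $\Fq$-irreducible curves, each of self-intersection $2$ (hence arithmetic genus $2$), with $C_1 \cdot C_2 = 0$. Here I would argue that $C_1 \cdot C_2 = 0$ on an abelian surface, combined with $C_1^2, C_2^2 > 0$, contradicts the Hodge index theorem: the intersection form on $\mathrm{NS}(A)\otimes\Q$ has signature $(1, \rho - 1)$, so it cannot contain a rank-$2$ positive-definite subspace, yet $C_1, C_2$ with $C_1^2 = C_2^2 = 2$ and $C_1\cdot C_2 = 0$ span exactly such a subspace (Gram determinant $4 > 0$, both diagonal entries positive). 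This kills that case. The same Hodge-index reasoning also rules out any configuration of two or more distinct components, so $D = n_1 C_1$; then $4 = n_1^2 C_1^2$ with $C_1^2$ a positive even integer forces $n_1 = 1$, $C_1^2 = 4$, and by adjunction $p_a(C_1) = 3$.

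Finally, once $D = C$ is a single $\Fq$-irreducible curve with $p_a(C) = 3$, I would classify it by its geometric genus $g$ and whether it is absolutely irreducible. We have $g \leq p_a = 3$, and $g \geq 2$ by simplicity of $A$ (a curve of geometric genus $\leq 1$ on $A$ yields, via its normalisation and Jacobian, an elliptic isogeny factor, contradicting simplicity — this is the argument already used in the proof of Theorem~\ref{thm:no_g1g2}). If $g = 3$ then $p_a = g$ forces $C$ smooth, hence an absolutely irreducible smooth genus $3$ curve (absolute irreducibility: a smooth curve that is geometrically reducible would be a disjoint union, contradicting connectedness of an irreducible projective variety, or one uses that $C$ embeds in $A$ and its normalisation maps to $A$). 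If $g = 2$, then $p_a - g = 1$ measures the total $\delta$-invariant of the singularities, so $C$ has exactly one node or cusp — an absolutely irreducible curve of geometric genus $2$ with a single double point. The remaining possibility is that $C$ is $\Fq$-irreducible but not absolutely irreducible. This gives exactly the three listed cases, completing the proof.
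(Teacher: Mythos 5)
Your proof is correct and reaches the lemma by a genuinely different route at the decisive step. The paper handles the reducible case by invoking Barth's Lemma~1.1 to get strict positivity $E\cdot F>0$ for two distinct $\Fq$-irreducible curves of arithmetic genus $2$, so that $D^2=E^2+F^2+2E\cdot F>4$; you instead observe that $C_1\cdot C_2=0$ with $C_1^2=C_2^2=2$ would make $C_1,C_2$ span a rank-two positive-definite subspace of $\mathrm{NS}(A)\otimes\Q$, contradicting the Hodge index theorem. These are equivalent in substance (Barth's positivity lemma is itself a Hodge-index consequence), but your version is self-contained and avoids the external citation. Your systematic expansion of $D=\sum n_i C_i$ with the parity of $C_i^2$ from adjunction also rules out the $n_1=2$, $C_1^2=1$ configuration cleanly, which the paper's terse ``$D=E+F$'' leaves implicit.

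One small slip in the closing classification: you argue that a smooth $\Fq$-irreducible curve must be absolutely irreducible because a geometrically reducible smooth curve ``would be a disjoint union, contradicting connectedness of an irreducible projective variety.'' But $\Fq$-irreducibility only gives connectedness over $\Fq$; base change to $\overline{\Fq}$ can perfectly well be disconnected, so this is not a contradiction as stated. The conclusion you want does hold here (two disjoint geometric components of self-intersection $2$ would again violate the Hodge index theorem over $\overline\Fq$), but the reason you give is wrong. This does not damage the lemma, since your third listed case already absorbs anything that fails to be absolutely irreducible.
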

\begin{proof}
    By the adjunction formula, for an $\Fq$-irreducible curve $D$ over $A$ we have $D^2=2p_a(D)-2$.
    If $D$ is an $\Fq$-irreducible curve then it necessarily has arithmetic genus $3$,
    and it must be one of the three possibilities listed in the statement.
    Hence, to conclude the proof it suffices to show that $D$ is an $\Fq$-irreducible curve.

    If $D=E+F$ were reducible, using Equation~\eqref{eq:div_square}
    % \begin{equation}\label{eq:div_square} 
    %     D^2=E^2+F^2+2\cdot E.F
    % \end{equation}
    and the assumption that $D^2=4$, we would get that it has at most $2$ irreducible components having arithmetic genus strictly lower than $3$.
    Since $A$ is simple, the components $E$ and $F$ of $D$ cannot be elliptic curves.
    Hence, they necessarily have arithmetic genus $2$.
    So, again by the adjunction formula, we get that $E^2=F^2=2$.
    Moreover, we have $E.F>0$ by Lemma \ref{lem:Barth1.1}.
    Combining these with Equation~\eqref{eq:div_square}, we get $D^2>4$ which is a contradiction.
    It follows that $D$ must be an $\F_q$-irreducible curve, thus concluding the proof.
 \end{proof}

\begin{theorem}\label{thm:pol_deg_4_iff_pa3}
    Let $A$ be a simple abelian surface defined over $\Fq$. Then, the following statements are equivalent:
    \begin{enumerate}[(i)]
        \item\label{g3_two} $A$ has a polarisation of degree $4$;
        \item\label{g3_three} $A$ contains an $\Fq$-irreducible curve of arithmetic genus $3$.
    \end{enumerate}
\end{theorem}
\begin{proof}
    We start by proving \ref{g3_two}$\Rightarrow$\ref{g3_three}.
    % Since $\Fq$ is finite, the existence of a polarisation guarantees the existence of an ample invertible sheaf $\mathcal{L}\in\mathrm{Pic}(A)$ such that the polarisation is given by $\lambda_\mathcal{L}$ \cite[Remark 13.2]{CS86}. 
    % From \cite[p.~150]{M_AV70} we know that, writing $\mathcal{L}=\mathcal{L}(D)$, we have $\deg \lambda_\mathcal{L}= \chi(D)^2=((D^2)/2)^2$. 
    % It follows that there exists an effective divisor $D$ on $A$ such that $D^2=4$. 
    Following the reasoning in the proof of \cref{prop:pp_iff_pa2}, we see that the polarisation of degree $4$ is given by an effective rational divisor $D$ on $A$ with self-intersection $D^2=4$.
    By Lemma \ref{lem:div4} we conclude.

    To prove \ref{g3_three}$\Rightarrow$\ref{g3_two}, let $C$ be an $\Fq$-irreducible curve of arithmetic genus $3$ lying over $A$. 
    % Note that  by the adjunction formula $C^2=2\cdot 3-2= 4$. 
    % By Lemma \ref{lem:Barth1.1}, the curve $C$ intersects any other $\Fq$-irreducible curve on $A$ positively.
    % Hence, by the Nakai--Moishezon criterion \cite[Sec.~5, Thm.~1.10]{Hart}, $C$ is an ample divisor. 
    % Therefore, $C$ defines a polarisation of degree $4$ on $A$.
    As in the proof of Proposition~\ref{prop:pp_iff_pa2}, the adjunction formula, Lemma \ref{lem:Barth1.1} and the Nakai--Moishezon criterion imply that $C$ is an ample divisor, which in turn defines a polarisation of degree $4$ by Equation~\eqref{deg_pol_self_intersection}.
\end{proof}

\begin{remark}
    If $A$ does not contain an absolutely irreducible curve of geometric genus $2$, then, from Lemma \ref{lem:div4}, we deduce that the genus $3$ $\Fq$-irreducible curve in the statement of Theorem \ref{thm:pol_deg_4_iff_pa3} is either an absolutely irreducible smooth curve of genus $3$ or an $\Fq$-irreducible curve of arithmetic genus $3$ which is not absolutely irreducible.
\end{remark}

From now on, our main focus will be to characterise the abelian surfaces with a polarisation of degree $4$ among those with Weil polynomial in $\PirrA\sqcup\PirrB \sqcup  \{ (t^2-2)^2 , (t^2-3)^2 \}$, which as we know by Theorem \ref{thm:no_g1g2} do not contain curves of geometric (hence arithmetic) genus up to $2$.

We start by analysing the two cases of $(t^2-2)^2 $ and $(t^2-3)^2$ in Proposition \ref{prop:q2q3}.
In the following sections, we will apply Theorem~\ref{thm:pol_deg_4_iff_pa3} to characterise which isogeny classes with Weil polynomial in $\PirrA\sqcup\PirrB$ contain an abelian surface containing a curve of genus $3$.

\begin{proposition}\label{prop:q2q3}
    Let $\cC$ (resp.~$\cC'$) be the isogeny class determined by $(t^2-2)^2$ (resp.~$(t^2-3)^2$).
    Then 
    \begin{itemize}
        \item no abelian surface in $\cC$ contains an absolutely irreducible curve of geometric (and thus arithmetic) genus $3$.
        \item there exists an abelian surface in $\cC'$ containing the absolutely irreducible smooth curve of genus $3$ defined by $y^4+2x^3z+xz^3$.
    \end{itemize}
\end{proposition}
\begin{proof}
    If an abelian surface $A$ in $\cC$ contains an absolutely irreducible curve $C$ of geometric genus $3$, then $A$ is an isogeny factor of the Jacobian $\Jac(\tilde C)$ of the normalisation $\tilde C$ of $C$ by \cite[Prop.~2]{Haloui17}\footnote{The author requires the curve to have a rational point on the smooth curve to guarantee the existence of an embedding into its Jacobian. However, the existence of a divisor of degree $1$ is sufficient. A curve defined over a finite field always admits such a divisor.\label{footnote}}.
    % See for example Lectures on rational points on curves by Poonen, Exercise 3.5 and Section 5.
    Hence, the Weil polynomial of $\Jac(\tilde C)$ must be of the form $(t^2-2)^2 f_E(t)$, where $f_E(t)$ is the Weil polynomial of an elliptic curve over $\F_2$.

    Note that there are only $5$ distinct isogeny classes of elliptic curves over $\F_2$. 
    A search in the database of isogeny classes of abelian threefolds defined over $\F_2$ in the LMFDB \cite{lmfdb} returns the isogeny classes 
    \href{http://www.lmfdb.org/Variety/Abelian/Fq/3/2/ac_ac_i}{3.2.ac\_ac\_i}, 
    \href{http://www.lmfdb.org/Variety/Abelian/Fq/3/2/ab_ac_e}{3.2.ab\_ac\_ e}, 
    \href{http://www.lmfdb.org/Variety/Abelian/Fq/3/2/a_ac_a}{3.2.a\_ac\_a}, 
    \href{http://www.lmfdb.org/Variety/Abelian/Fq/3/2/b_ac_ae}{3.2.b\_ac\_ae} and 
    \href{http://www.lmfdb.org/Variety/Abelian/Fq/3/2/c_ac_ai}{3.2.c\_ac\_ai}.
    None of them contains a Jacobian, which implies that $A$ does not contain an absolutely irreducible curve of geometric genus $3$.
    Hence, $A$ does not contain an absolutely irreducible curve of arithmetic genus $3$, as well.

    We now apply a similar reasoning to $\cC'$.
    By searching in the LMFDB, we find $7$ isogeny classes of the form $(t^2-3)^2 f_{E'}(t)$ for some elliptic curve $E'/\F_3$.
    Moreover, $6$ of these $7$ isogeny classes do not contain a Jacobian.
    See 
    \href{http://www.lmfdb.org/Variety/Abelian/Fq/3/3/ad_ad_s}{3.3.ad\_ad\_s},
    \href{http://www.lmfdb.org/Variety/Abelian/Fq/3/3/ac_ad_m}{3.3.ac\_ad\_m},
    \href{http://www.lmfdb.org/Variety/Abelian/Fq/3/3/ab_ad_g}{3.3.ab\_ad\_g},
    \href{http://www.lmfdb.org/Variety/Abelian/Fq/3/3/b_ad_ag}{3.3.b\_ad\_ag},
    \href{http://www.lmfdb.org/Variety/Abelian/Fq/3/3/c_ad_am}{3.3.c\_ad\_am} and
    \href{http://www.lmfdb.org/Variety/Abelian/Fq/3/3/d_ad_as}{3.3.d\_ad\_as}.
    The remaining one has label
    \href{http://www.lmfdb.org/Variety/Abelian/Fq/3/3/a_ad_a}{3.3.a\_ad\_a} and
    Weil polynomial $(t^2-3)^2(t^2+3)$.
    It contains only the Jacobian surface $J$ of the curve $C:y^4+2x^3z+xz^3$.
    The degree $2$ 
    involution sending $(x:y:z)\mapsto (x:-y:z)$  defines a double cover $\pi:C\to F$ where $F$ is the elliptic curve given by $y^2z+2x^3+xz^2$.
    Hence, $C$ is bielliptic, and
    $J/\pi^* F \in \cC'$. 
    The branch points of $\pi$ are the images of the fixed points of the involution, that is, the points on $C$ with $y=0$.
    We get the points $(0:0:1)$, $(1:0:1)$ and $(2:0:1)$ together with the point at infinity $(1:0:0)$. 
    We now follow Barth's proof \cite[Prop. 1.8]{Barth} (the forward direction of the implication),
    where the author defines a map $\varphi: C \hookrightarrow J \to J/\pi^* F$ using any of the branch points of $\pi$.
    A careful analysis of the construction shows that, since the branch point is defined over $\F_3$, the same holds for $\varphi$.
    Barth's argument to show that $\varphi$ is an embedding carries over to finite fields without any modification.
    This concludes the proof.
   \end{proof}

Having this characterisation at hand, we can easily derive the Corollary in the introduction.
\begin{corollary}\label{cor:consequence}
    Let $f(t)$ be the Weil polynomial of a simple isogeny class $\cC$ of abelian surfaces over $\Fq$.
    \begin{enumerate}[(i)]
        \item \label{cor:consequence:1} 
            Assume that $f(t)\in\PirrA\sqcup\PirrB$ and let $A$ be an abelian surface in $\cC$.
            If $A$ contains an absolutely irreducible curve of arithmetic genus $\leq 3$ then $A$ admits a polarisation of degree $4$.
        \item \label{cor:consequence:2}
            No abelian surface in $\cC$ contains an $\Fq$-irreducible curve of arithmetic genus $\leq 3$ if and only if $f(t)$ is in $\PirrA$ and no abelian surface in $\cC$ admits a polarisations of degree~$4$.
    \end{enumerate}   
\end{corollary}
\begin{proof}
    We start by proving \ref{cor:consequence:1}.
    Let $C$ be an absolutely irreducible curve of arithmetic genus $\leq 3$ on $A$.
    By the assumption on $f(t)$, Theorem~\ref{thm:no_g1g2} implies that $p_a(C)=3$. 
    Therefore, $A$ admits a polarisation of degree $4$ by Theorem~\ref{thm:pol_deg_4_iff_pa3}.

    We now continue with \ref{cor:consequence:2}.
    Since $\cC$ is simple, no $A$ in $\cC$ contains an $\Fq$-irreducible curve of arithmetic or geometric genus $1$, because such a curve would be an elliptic curve and hence an isogeny factor of $\cC$.
    Then we conclude the proof by observing that Proposition~\ref{prop:pp_iff_pa2} and Theorem~\ref{thm:pol_deg_4_iff_pa3} imply that an abelian surface $A$ in $\cC$ contains an $\Fq$-irreducible curve of arithmetic genus $\leq 3$ if and only if $A$ admits a polarisation which is of degree $1$ or $4$.
\end{proof}

\section{Factorisation of \texorpdfstring{$2$}{2}}
\label{sec:fact_of_2}

To prove Main Theorem~\ref{mainthm2}, we first need some technical results concerning the factorisation of $2$ in the extension $K = \Q[t]/f(t)$ where $f(t)$ is a Weil polynomial in $\PirrA\sqcup\PirrB$.
Note that $K$ is a CM-field, that is, a totally imaginary quadratic extension of a totally real field $K^+$.
Concretely, if we denote by $\pi$ the class of $t$ in $K$, then $K^+=\Q(\pi+q/\pi)$.
Moreover, if we write 
\[ f(t) = t^4+at^3+bt^2+aqt+q^2 \]
then the minimal polynomial of $\pi+q/\pi$ is 
\[ f^+(t) = t^2 + at + (b-2q). \]
We will denote by $\cO$ (resp.~$\cO^+$) the ring of integers of $K$ (resp.~$K^+$).

In this section, we study the factorisation of $2$ in $K^+$ and $K$. 
We shall use these technical results in the following sections to study when an isogeny class contains an abelian surface admitting a polarisation of degree $4$.

\begin{lemma}\label{lem:ram_ram}
    Assume that $f(t) \in \PirrB$. 
    Then $2$ ramifies in $K^+$.
\end{lemma}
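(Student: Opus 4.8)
The plan is to reduce the statement to a congruence condition on the squarefree part of a single explicit integer, and then to verify that condition for each admissible value of $b$ coming from the list in Theorem~\ref{thm:no_g1g2}.\eqref{weil_restr}.

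Since $f(t)\in\PirrB$ forces $a=0$, the minimal polynomial of $\pi+q/\pi$ is $f^+(t)=t^2+(b-2q)$, so $K^+=\Q(\sqrt{2q-b})$. Writing $2q-b=u^2 v$ with $v$ squarefree and recalling the classical fact that $2$ ramifies in $\Q(\sqrt v)$ precisely when $v\not\equiv 1\pmod 4$ (i.e.\ $v\equiv 2$ or $3\pmod 4$), the lemma becomes the assertion that the squarefree part of $2q-b$ is $\equiv 2$ or $3\pmod 4$ for every $b$ occurring in $\PirrB$. Note that the bullets $q=2,\ b=-4$ and $q=3,\ b=-6$ of Theorem~\ref{thm:no_g1g2}.\eqref{weil_restr} yield the reducible polynomials $(t^2-2)^2$ and $(t^2-3)^2$, which are excluded from $\PirrB$, so they need not be considered.

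Next I would run through the remaining cases. If $b=1-2q$, then $2q-b=4q-1$ is odd with $4q-1\equiv 3\pmod 4$, so its squarefree part is $\equiv 3\pmod 4$. If $b=2-2q$ — in which case $p$, and hence $q=p^n$, is odd — then $2q-b=2(2q-1)$ with $2q-1$ odd, so $2$ occurs to the first power in the squarefree part, which is therefore $\equiv 2\pmod 4$. The remaining cases all have $b=-q$, hence $2q-b=3q$, and here I would use crucially that $q$ is a power of the single prime $p$: if $p=2$ and $q$ is a nonsquare then $q=2^n$ with $n$ odd, so $3q=6\cdot(2^{(n-1)/2})^2$ has squarefree part $6\equiv 2\pmod 4$; if instead $p=3$ (with $q$ a square) or $p\equiv 11\pmod{12}$, then $q$ is a square prime power, so $3$ divides $3q$ to an odd power while every other prime divides it to an even power, and the squarefree part of $3q$ equals $3\equiv 3\pmod 4$. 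In every case the squarefree part of $2q-b$ is $\not\equiv 1\pmod 4$, so $2$ ramifies in $K^+$.

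I do not expect a genuine obstacle: once the identification $K^+=\Q(\sqrt{2q-b})$ is in place, the proof is a short residue-and-valuation computation. The only point requiring care is the bookkeeping against the list in Theorem~\ref{thm:no_g1g2}.\eqref{weil_restr} — discarding the two reducible exceptions and remembering that $q$ is a prime power, which is exactly what forces the squarefree part of $3q$ to be so constrained.
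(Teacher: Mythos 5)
Your proof is correct and follows essentially the same route as the paper's: identify $K^+=\Q(\sqrt{2q-b})$ from $a=0$, invoke the standard criterion for $2$ to ramify in a real quadratic field in terms of the squarefree part of $2q-b$, and then run through the cases from Theorem~\ref{thm:no_g1g2}.\eqref{weil_restr} (discarding the two reducible bullets). The only difference is cosmetic bookkeeping in the $b=-q$ sub-cases, where you spell out the prime-power parity argument a bit more explicitly than the paper does.
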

\begin{proof}
    Since $f(t)\in \PirrB$ then $a=0$.
    Hence, $\Delta_{f^+} = 4(2q-b)$.
    Let $d$ be a positive squarefree integer such that $2q-b = c^2 d$ with $c\in \Z$.
    Note that if $c$ is odd then $2q-b \equiv d \bmod 4$.     Recall that $2$ ramifies in $K^+=\Q(\sqrt{d})$ if and only if $d=1$ or $d\equiv 2,3\bmod 4$.
    We now analyse all the possible cases, which are described in Theorem~\ref{thm:no_g1g2}.\ref{weil_restr}.
    \begin{itemize}
        \item If $b = 1-2q$ then $2q-b = 4q-1$. So $c$ is odd. Hence $d \equiv 3 \bmod 4$.
        \item If $b = 2-2q$ then $2q-b = 4q-2$. So $c$ is odd. Hence $d \equiv 2 \bmod 4$.
        \item If $b=-q$ and $q$ is a square then $d = 3$.
        \item If $b=-q$, $p=2$ and $q$ is not a square then $d=6 \equiv 2\bmod 4$.
    \end{itemize}
\end{proof}

\begin{lemma}\label{lem:ram_tot_ram}
    Assume that $f(t) \in \PirrB$.
    Let $\frm$ be the unique maximal ideal of $\cO^+$ above $2$ (cf.~Lemma~\ref{lem:ram_ram}).
    Then the following are equivalent:
    \begin{enumerate}[(i)]
        \item \label{lem:ram_tot_ram:KKp_ram} $K/K^+$ is ramified.
        \item \label{lem:ram_tot_ram:q_ram} $\frm$ is the unique maximal ideal of $\cO^+$ that ramifies in $K$.
        \item \label{lem:ram_tot_ram:2_tot_ram} $2$ is totally ramified in $K$.
        \item \label{lem:ram_tot_ram:b} $b=2-2q$ with $q$ odd.
    \end{enumerate}
    In particular, if any of the above equivalent conditions holds then $f(t)$ is ordinary.
\end{lemma}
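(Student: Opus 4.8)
The plan is to prove that conditions (i), (ii) and (iii) are all equivalent to the single assertion that $\frm$ ramifies in $K$, and then to determine precisely when the latter holds. First I would make the quadratic extension $K/K^+$ explicit. Since $f(t)\in\PirrB$ we have $a=0$, so $\pi$ is a root of $t^2-(\pi+q/\pi)t+q$ over $K^+$, whose discriminant over $K^+$ equals $(\pi+q/\pi)^2-4q=(2q-b)-4q=-(b+2q)$; hence $K=K^+\bigl(\sqrt{-(b+2q)}\bigr)$. Recalling that $f(t)\in\PirrB$ means $f(t)$ is one of the irreducible Weil polynomials in Theorem~\ref{thm:no_g1g2}.\eqref{weil_restr}, i.e.\ $b\in\{1-2q,\ 2-2q,\ -q\}$ (in the subcase $b=-q$, one has $q$ a square, or $q$ twice a square if $p=2$), a short case check shows that $-(b+2q)$ is always $-1$ or $-2$ times a rational square. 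Thus $K=K^+(\sqrt{-1})$ or $K=K^+(\sqrt{-2})$, so $\mathfrak{d}_{K/K^+}$ divides a power of $2$, and hence $\frm$ --- the only prime of $\cO^+$ above $2$ --- is the only finite prime of $\cO^+$ that can ramify in $K$. This immediately gives (i)$\Leftrightarrow$(ii). Moreover $2\cO^+=\frm^2$ by Lemma~\ref{lem:ram_ram}, so $2\cO_K=(\frm\cO_K)^2$: if $\frm$ ramifies in $K$ this is a fourth power and $2$ is totally ramified in $K$, whereas otherwise $2$ has ramification index $2$ in $K$; this gives (iii).

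The substance of the proof is the equivalence with (iv). By Proposition~\ref{prop:galois}, or directly because $K=K^+\cdot\Q(\sqrt{-(b+2q)})$ is the compositum of a real and an imaginary quadratic field, $K$ is biquadratic over $\Q$, with three quadratic subfields $K^+=\Q(\sqrt{2q-b})$, $\Q(\sqrt{-(b+2q)})$ and $\Q(\pi^2)=\Q(\sqrt{b^2-4q^2})$. I would then invoke the standard inertia-group fact that $2$ is totally ramified in a $(\Z/2\Z)^2$-extension of $\Q$ if and only if it ramifies in each of the three quadratic subfields. Ramification of $2$ in $K^+$ is Lemma~\ref{lem:ram_ram}, so it remains to test the other two subfields in each case. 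For $b=1-2q$ one of them is $\Q(\sqrt{1-4q})$, and $1-4q$ is odd and $\equiv 1\bmod 4$, so $2$ is unramified there; for $b=-q$ one of them is $\Q(\sqrt{-3})$, again with $2$ unramified; while for $b=2-2q$ (so that $q$ is odd) the two subfields are $\Q(\sqrt{1-2q})$ and $\Q(\sqrt{-2})$, and $2$ ramifies in both. Hence $2$ is totally ramified in $K$ precisely when $b=2-2q$ and $q$ is odd, which is (iv). Finally the ``in particular'' claim follows from Lemma~\ref{lem:Pirrordinary}: within $\PirrB$, the equality $b=2-2q$ forces $p>2$, hence $f(t)$ is ordinary.

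I expect the second paragraph to be the main obstacle: one has to identify the three quadratic subfields of $K$ correctly, recall and apply the biquadratic ramification criterion, and treat the subcases of $\PirrB$ with care --- in particular the branch $b=-q$, which depends on whether $q$ is a square or $p=2$, and the passage to squarefree parts (the relevant integers being odd, or odd up to an even square factor, is what fixes their class modulo $4$). The first paragraph, by contrast, is routine computation with relative differents and ramification indices.
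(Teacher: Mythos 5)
Your proof is correct, but it takes a genuinely different route from the paper's. The paper's argument is organised around the norm $N = N_{K^+/\Q}(\Delta_{K/K^+})$ of the relative discriminant: combining the relations $\Delta_f = [\cO:\Z[\pi]]^2\Delta_K$, $\Delta_{f^+}=[\cO^+:R^+]^2\Delta_{K^+}$ and $\Delta_K=\Delta_{K^+}^2 N$ yields the divisibility $N \mid (2q+b)^2$, which (after disposing of the non-ordinary case $b=-q$ by an explicit computation of $\Delta_{K^+}$ and $\Delta_K$) forces $b=2-2q$ whenever $N\ne 1$, and the remaining implication (iv)$\Rightarrow$(iii) is obtained by reducing $f(t)\bmod 2$ and invoking the Kummer--Dedekind theorem. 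You instead write the CM extension out explicitly as $K=K^+\bigl(\sqrt{-(b+2q)}\bigr)$ and observe that in every case this is $K^+(\sqrt{-1})$ or $K^+(\sqrt{-2})$, so that only $\frm$ can ramify in $K/K^+$; this, together with $2\cO^+=\frm^2$, immediately gives (i)$\Leftrightarrow$(ii)$\Leftrightarrow$(iii). You then decide total ramification of $2$ by checking ramification in each of the three quadratic subfields of the biquadratic field $K$, namely $K^+$, $\Q(\sqrt{-(b+2q)})$, and $\Q(\pi^2)=\Q(\sqrt{b^2-4q^2})$. What the paper's method buys is a single divisibility bound that isolates $b=2-2q$ with little case analysis; what yours buys is a uniform treatment of the ordinary and non-ordinary cases and a transparent explanation of why $b=2-2q$ is special ($-(b+2q)=-2$ is precisely the value for which $2$ ramifies in the imaginary quadratic subfield). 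The squarefree-part bookkeeping you flag as the delicate point is indeed where care is needed (one must pass to the squarefree parts of $1-4q$, $-3$, $1-2q$ and use that they are odd to pin down their class mod $4$), but you carry it through correctly; the biquadratic inertia criterion you invoke is also the correct one, and the ``in particular'' clause follows as you say from Lemma~\ref{lem:Pirrordinary}.
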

\begin{proof}
    Clearly, \ref{lem:ram_tot_ram:q_ram}$\Rightarrow$\ref{lem:ram_tot_ram:KKp_ram} and \ref{lem:ram_tot_ram:2_tot_ram}$\Rightarrow$\ref{lem:ram_tot_ram:KKp_ram}.
    Before proving the remaining implications, we pause to prove a general claim.
    Set $N= N_{K^+/\Q}(\Delta_{K/K^+})$.
    We have
    \begin{align*}
        \Delta_f & = 16q^2(4q^2-b^2)^2=[\cO:\Z[\pi]]^2\Delta_K = [\cO:R]^2q^2\Delta_K,\\
        \Delta_{f^+} & = 4(2q-b) = [\cO^+:R^+]^2\Delta_{K^+}, \text{ and}\\
        \Delta_K & = \Delta_{K^+}^2 N.
    \end{align*}
    Combining these relations, we obtain
    \[ N \cdot \left( \frac{[\cO:R]}{[\cO^+:R^+]^2} \right)^2 = (2q+b)^2. \]
    Note that $R=R^+[\pi]$ (resp.~$\cO^+[\pi]$) is locally free of rank $2$ over $R^+$ (resp.~$\cO^+$).
    Hence $[\cO^+:R^+]^2 = [\cO^+[\pi]:R]$ divides $[\cO:R]$.
    Therefore, 
    \begin{equation}\label{eq:Ndiv} 
        N\text{ divides }(2q+b)^2.
    \end{equation}

    Now, we show that \ref{lem:ram_tot_ram:KKp_ram}$\Rightarrow$\ref{lem:ram_tot_ram:b} and \ref{lem:ram_tot_ram:KKp_ram}$\Rightarrow$\ref{lem:ram_tot_ram:q_ram}.
    So we assume that $N\neq 1$.

    Assume first that $f$ is non-ordinary.
    Then $f(t)=t^4-qt^2+q^2$ by Theorem~\ref{thm:no_g1g2} and Lemma~\ref{lem:Pirrordinary}.
    Note that $\zeta_3 \mapsto -\pi^2/q$ gives an embedding of $\Q(\zeta_3)$ in $K$ (cf.~\cite[Sec.~4]{HMNR08}).
    Moreover, as we have shown in the proof of Lemma~\ref{lem:ram_ram}, we have that $K^+=\Q(\sqrt{3})$ or $K^+=\Q(\sqrt{6})$.
    In the first case, we have $K=\Q(\zeta_3,\sqrt{3})$ and one computes that $\Delta_{K^+}=12$ and $\Delta_K=12^2$.
    In the second case, we have $K=\Q(\zeta_3,\sqrt{6})$ and one computes that $\Delta_{K^+}=24$ and $\Delta_K=24^2$.
    In both cases $K/K^+$ is unramified.

    Hence, $f(t)$ must be ordinary.
    Then, by Lemma~\ref{lem:Pirrordinary}, either $b=1-2q$ or, $b=2-2q$ and $q$ is odd.
    In the first case, Equation~\eqref{eq:Ndiv} implies that $N=1$, which cannot happen by assumption.
    So we must be in the second case, that is, \ref{lem:ram_tot_ram:b} holds.
    Equation~\eqref{eq:Ndiv} implies that if $b=2-2q$ (with $q$ odd) then $N \in \{1,2,4\}$, and in fact $N \in \{2,4\}$ since we are assuming that $K/K^+$ is ramified.
    Since $\frm$ is the unique maximal ideal of $\cO^+$ above $2$ by Lemma~\ref{lem:ram_ram}, we obtain that $\frm$ ramifies in $K$, that is, \ref{lem:ram_tot_ram:q_ram} holds.

    To conclude, we prove that \ref{lem:ram_tot_ram:b}$\Rightarrow$\ref{lem:ram_tot_ram:2_tot_ram}. 
    Since $b=2-2q$ with $q$ odd, we have that $f(t) \equiv (t+1)^4 \bmod 2$.
    By the Kummer-Dedekind theorem \cite[Thm.~8.2]{Stevenhagen08}, the order $\Z[\pi]$ in $K$ has a unique maximal ideal above $2$ which is regular since the remainder of the division of $f(t)$ by $t+1$ is $(q-1)^2+2 \equiv 2 \bmod 2^2$, because $q$ is odd.
    This implies that $\cO$ has a unique maximal ideal above $2$ with residue field $\F_2$, that is, $2$ is totally ramified in $K$.
\end{proof}

\begin{lemma}\label{lem:if_ram_then_B}
    Let $f(t)\in \PirrA\sqcup\PirrB$.
    If $K/K^+$ is ramified, or if $K/K^+$ is unramified and there exists a maximal ideal of $\cO^+$ that divides $(\pi-q/\pi)$ which stays inert in $K$, then $f\in \PirrB$.
    Moreover, in the latter case, $f(t)$ is non-ordinary.
\end{lemma}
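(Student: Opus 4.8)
The plan is to establish the first assertion by contraposition: I assume $f(t)\in\PirrA$ and show that then $K/K^+$ is unramified and every maximal ideal of $\cO^+$ dividing $(\pi-q/\pi)$ splits in $K$, in particular is not inert. Everything hinges on one structural fact, which I expect to be the main point of the proof: \emph{if $f(t)\in\PirrA$ then $K$ contains a primitive cube root of unity}, and hence $K=K^+(\sqrt{-3})$. To prove this I would use that $f$ is irreducible (Definition~\ref{df:partition}, Lemma~\ref{lemma:f_irred_almost}) and Galois (Proposition~\ref{prop:galois}); writing the roots of $f$ as $\pi,q/\pi,\pi',q/\pi'$ and putting $\pi^+=\pi+q/\pi$, a root of $f^+$, I compute $\pi^3+(q/\pi)^3=(\pi^+)^3-3q\pi^+$ and reduce it modulo $f^+$ to a linear polynomial in $\pi^+$ whose leading coefficient is $a^2-q-b$. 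The relation $a^2-b=q$ that defines $\PirrA$ makes this vanish, so $\pi^3+(q/\pi)^3=a(b-2q)\in\Z$, and the identical computation gives $\pi'^3+(q/\pi')^3=a(b-2q)$ as well. Since also $\pi^3\cdot(q/\pi)^3=q^3=\pi'^3\cdot(q/\pi')^3$, the unordered pairs $\{\pi^3,(q/\pi)^3\}$ and $\{\pi'^3,(q/\pi')^3\}$ coincide; hence $\pi^3=\pi'^3$ or $\pi^3=(q/\pi')^3$, so one of $\pi/\pi'$ or $\pi\pi'/q$ is a nontrivial cube root of unity, and it lies in $K$ because $K$ is the splitting field of $f$. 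As $K^+$ is totally real, $\Q(\zeta_3)\ne K^+$, and being distinct quadratic subfields of the quartic field $K$ they must generate it, so $K=K^+(\sqrt{-3})$.

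Granting this, the unramifiedness of $K/K^+$ follows by comparing two divisibility constraints on the relative discriminant $\mathfrak{d}_{K/K^+}$. First, since $K=K^+(\sqrt{-3})$ and the only rational prime ramifying in $\Q(\sqrt{-3})$ is $3$, the ideal $\mathfrak{d}_{K/K^+}$ is divisible only by primes of $\cO^+$ above $3$. Second, $\mathfrak{d}_{K/K^+}$ divides the discriminant of the order $\cO^+[\pi]$ over $\cO^+$, which is $\bigl((\pi+q/\pi)^2-4q\bigr)\cO^+=(\pi-q/\pi)^2\cO^+$; a short computation using $a^2-b=q$ shows this ideal has norm $b^2$ over $\Q$, so, since every prime factor of $b$ is $\equiv 1\pmod 3$, it is coprime to $3$. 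The two constraints force $\mathfrak{d}_{K/K^+}=\cO^+$.

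For the inertia statement, a maximal ideal $\frm$ of $\cO^+$ dividing $(\pi-q/\pi)$ divides its norm $N_{K/K^+}(\pi-q/\pi)=-(\pi-q/\pi)^2$, an element of $\cO^+$ of norm $b^2$ over $\Q$; hence $\frm$ lies above a rational prime $\ell\mid b$, so $\ell\equiv 1\pmod 3$, and in particular $\ell$ is odd and $\ell\ne 3$. Then $-3$ is a square modulo $\ell$, hence a square in $\Q_\ell$ and a fortiori in the completion $K^+_\frm$, so $x^2+3$ splits over $K^+_\frm$ and, because $K=K^+(\sqrt{-3})$, the ideal $\frm$ splits in $K/K^+$; in particular it is not inert. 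Together with the previous paragraph this proves the contrapositive, hence the first assertion.

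It remains to treat the ``moreover.'' Suppose $K/K^+$ is unramified and some maximal ideal $\frm$ of $\cO^+$ dividing $(\pi-q/\pi)$ is inert in $K$. By the part just proved $f(t)\in\PirrB$, so $a=0$. If $f$ were ordinary, then by Lemma~\ref{lem:Pirrordinary} we would have either $b=2-2q$ with $q$ odd---but then Lemma~\ref{lem:ram_tot_ram} makes $K/K^+$ ramified, contradicting our hypothesis---or $b=1-2q$, in which case $(\pi-q/\pi)^2=(\pi^+)^2-4q=(4q-1)-4q=-1$, so $\pi-q/\pi$ is a unit of $\cO$ and no maximal ideal divides it, again contradicting the existence of $\frm$. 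Therefore $f$ must be non-ordinary. The only step that is not routine ramification theory or direct manipulation of the explicit polynomials in $\PirrA\sqcup\PirrB$ is the appearance of $\zeta_3$ in the first paragraph, which is where I expect the real work to be.
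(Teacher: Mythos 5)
Your proof is correct, and it takes a genuinely different route from the paper for the main assertion. The paper disposes of the first claim in one line: by \cite[Thm.~1.1]{Howe96}, the hypotheses (ramification of $K/K^+$, or an inert prime dividing $\pi-q/\pi$) guarantee that the isogeny class contains a principally polarised abelian surface, which by Theorem~\ref{thm:no_g1g2} rules out $\PirrA$. You instead argue by contraposition and make everything self-contained: from $a^2-b=q$ you derive, by an explicit symmetric-function computation modulo $f^+$, that the unordered pairs $\{\pi^3,(q/\pi)^3\}$ and $\{\pi'^3,(q/\pi')^3\}$ coincide, whence $\zeta_3\in K$ and $K=K^+(\sqrt{-3})$. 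In the paper this fact is isolated as Lemma~\ref{lemma:MinK}, which is stated \emph{after} the present lemma and is itself proved by citing \cite[proof of Lemma~12.2]{Howe95} for the ordinary case; your computation gives a uniform elementary proof covering both the ordinary and supersingular cases at once, which is a genuine improvement at that spot. You then exploit $K=K^+(\sqrt{-3})$ twice: $\mathfrak{d}_{K/K^+}$ is supported only at primes above $3$ (translation of ramification along linearly disjoint extensions) while it divides $(\pi-q/\pi)^2\cO^+$, whose absolute norm $b^2$ is prime to $3$ because every prime factor of $b$ is $\equiv 1\bmod 3$; and any $\frm$ dividing $\pi-q/\pi$ lies over some $\ell\mid b$, so $\ell\equiv1\bmod 3$ makes $-3$ a square in $K^+_\frm$ and forces $\frm$ to split. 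All of this checks out (I verified $N_{K^+/\Q}\!\bigl((\pi-q/\pi)^2\bigr)=(b+2q)^2-4qa^2=b^2$ under $a^2-b=q$, and the nontriviality of the cube root of unity from the separability of $f$ and $\pi'\notin\{\pi,q/\pi\}$). The trade-off is length versus reliance on Howe's black-box theorem; your version is longer but makes the mechanism visible. The ``moreover'' part is essentially identical to the paper's: $f\in\PirrB$ forces $a=0$, ordinarity would give $b=2-2q$ (contradicting unramifiedness via Lemma~\ref{lem:ram_tot_ram}) or $b=1-2q$ (making $(\pi-q/\pi)^2=-1$ a unit, so no $\frm$ can divide it).
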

\begin{proof}
    If $K/K^+$ is ramified or if there exists a prime in $K^+$ that divides $(\pi-q/ \pi)$ and which stays inert in $K$, then, by \cite[Thm.~1.1]{Howe96}, there is a principally polarised abelian surface in the isogeny class associated to $f(t)$.
    Hence $f\in \PirrB$ by Theorem~\ref{thm:no_g1g2}.

    To show the last statement, assume that $K/K^+$ is unramified and that there exists a maximal ideal $\frn$ of $\cO^+$ dividing $(\pi-q/\pi)$ which stays inert in $\cO$.
    Write $f(t) = t^4 + bt^2 +q^2$, as usual.
    Since the minimal polynomial of $\pi+q/\pi$ is $f^+(t) = t^2 + b -2q$, we have that $(\pi+q/\pi)^2 + b -2q =0$, which combined with $(\pi-q/\pi)^2 = \pi^2 +(q/\pi)^2 -2q$ gives 
    \[ (\pi-q/\pi)^2 = -b - 2q. \]
    Assume by contradiction that $f(t)$ is ordinary.
    Then, by Lemma~\ref{lem:Pirrordinary}, either $b=1-2q$, or $b=2-2q$ with $q$ odd.
    If $b=1-2q$ then $(\pi-q/\pi)^2 = -1$ which implies that $1 \in \frn^2\cO$. 
    This is not possible since $\frn\cO$ is a maximal ideal of $\cO$ by assumption.
    If $b=2-2q$ with $q$ odd, then $2 \in \frn^2\cO$.
    By Lemma~\ref{lem:ram_ram}, it follows that $\frn$ is the unique maximal ideal of $\cO^+$ above $2$.
    By Lemma~\ref{lem:ram_tot_ram}, $\frn$ is ramified in $K$, leading to a contradiction also in this case.
    Therefore $f(t)$ is non-ordinary.
\end{proof}

\begin{lemma}\label{lemma:MinK}
    Assume that $f(t) \in \PirrA$.
    Then $K$ contains $M=\Q(\zeta_3)$.
\end{lemma}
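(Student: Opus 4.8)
The plan is to exhibit explicitly an element of $K$ whose square equals $-3$; since $\Q(\zeta_3)=\Q(\sqrt{-3})$, this is exactly what is needed, and everything will be built from $\pi$, the class of $t$ in $K$.

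The key initial observation is the shape of the real quadratic subfield. Writing $f(t)=t^4+at^3+bt^2+aqt+q^2$, we have $b=a^2-q$ because $f\in\PirrA$, and $K^+=\Q(\pi+q/\pi)=\Q[t]/f^+(t)$ with $f^+(t)=t^2+at+(b-2q)$. Substituting $b=a^2-q$ gives $\Delta_{f^+}=a^2-4(b-2q)=3(4q-a^2)$, so a factor $3$ is already present and $K^+=\Q\bigl(\sqrt{3(4q-a^2)}\bigr)$. Here $4q-a^2>0$: indeed $b<0$ forces $a^2<q$, and in any case $\Delta_{f^+}\neq 0$ since $f^+$ is irreducible (because $[K^+:\Q]=2$).

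Next I would introduce $\gamma:=\pi-q/\pi\in K$. Since $f$ is irreducible, $K=\Q(\pi)$ has degree $4$ over $\Q$, so $\pi^2\neq q$ and $\gamma\neq 0$. Putting $s=\pi+q/\pi\in K^+$, one gets $\gamma^2=s^2-4q$; letting $\theta=\gamma^2$ and $\theta'$ be its conjugate over $\Q$ inside $K^+$, the relations $s+s'=-a$ and $ss'=b-2q$ give, after a short computation,
\[ \theta+\theta'=-a^2-2q,\qquad \theta\theta'=N_{K^+/\Q}(\theta)=(a^2-q)^2 . \]
With $m:=q-a^2\ (=-b)$, the second identity says $m^2/\gamma^2=\theta'$, whence
\[ \left(\gamma-\frac{m}{\gamma}\right)^2=\theta+\theta'-2m=(-a^2-2q)-2(q-a^2)=a^2-4q . \]
Thus $\delta:=\gamma-m/\gamma\in K$ satisfies $\delta^2=a^2-4q$. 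Finally, $K^+$ contains an element $\varepsilon$ with $\varepsilon^2=3(4q-a^2)$ (concretely $\varepsilon=2(\pi+q/\pi)+a$), and then $\eta:=\varepsilon\delta\in K$ satisfies $\eta^2=3(4q-a^2)(a^2-4q)=-3(4q-a^2)^2$. Since $4q-a^2\neq0$, the element $\eta/(4q-a^2)\in K$ squares to $-3$, so $\sqrt{-3}\in K$ and $M=\Q(\zeta_3)\subseteq K$, as desired.

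These computations are routine; the only delicate points are that no denominator vanishes ($\gamma\neq0$ from irreducibility of $f$, and $4q-a^2\neq0$ from $b<0$), and these are easily dispatched. I expect the real content to be spotting which element of $K$ to write down in the first place. Structurally, the explanation is that $K/\Q$ is Galois (Proposition~\ref{prop:galois}) and the fact that $N_{K^+/\Q}(\gamma^2)=b^2$ is a perfect square forces $\Gal(K/\Q)$ to be a Klein four-group, so $K$ is biquadratic; this pins the third quadratic subfield (complementary to $K^+$) down to $\Q(\sqrt{a^2-4q})$, and together with the factor $3$ sitting inside $\Delta_{f^+}$ this makes the remaining subfield $\Q(\sqrt{-3})$. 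In the special case $a=0$, where $f=t^4-qt^2+q^2$, there is the shorter alternative that $-\pi^2/q$ is then a primitive cube root of unity (cf.\ the proof of Lemma~\ref{lem:ram_tot_ram}), but the computation above handles every $f\in\PirrA$ at once.
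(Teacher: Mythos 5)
Your proof is correct, and it takes a genuinely different route from the paper's. The paper splits into cases: when $f$ is ordinary it simply cites the proof of \cite[Lemma~12.2]{Howe95} to conclude $\Q(\zeta_3)\subseteq K$, and when $f$ is non-ordinary it notes $f(t)=t^4-qt^2+q^2$ and exhibits $\zeta_3\mapsto -\pi^2/q$ directly. Your argument instead gives a single, uniform, self-contained construction: using $b=a^2-q$ you show $\delta=\gamma-m/\gamma$ (with $\gamma=\pi-q/\pi$, $m=-b$) squares to $a^2-4q$ and $\varepsilon=2(\pi+q/\pi)+a$ squares to $3(4q-a^2)$, so $\eta/(4q-a^2)$ is an explicit square root of $-3$ in $K$. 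I verified the trace/norm identities $\theta+\theta'=-a^2-2q$ and $\theta\theta'=b^2$ (using $s^2=-as-(b-2q)$ and $b=a^2-q$), and the non-vanishing conditions you flag ($\gamma\neq0$ from irreducibility of $f$; $4q-a^2\neq0$ from $b<0$) are exactly the ones that matter. What your approach buys is independence from Howe's Lemma~12.2 and from the case split on ordinarity, at the cost of a slightly longer computation; the structural remark that $K$ is biquadratic over $\Q$ with $\Q(\sqrt{-3})$ as a subfield is the same conclusion the paper implicitly relies on, made explicit.
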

\begin{proof}
    If $f(t)$ is ordinary then \cite[proof of Lemma~12.2]{Howe95} shows that $K$ contains $M=\Q(\zeta_3)$. 
    If $f(t)$ is not ordinary, then \cite[Thm.~2]{HMNR08} tells us that 
    \[ f(t) = t^4 -qt^2 + q^2. \]
    Then $\zeta_3 \mapsto -\pi^2/q$ gives an embedding of $M$ in $K$ (cf.~\cite[Sec.~4]{HMNR08}).
\end{proof}

\begin{proposition}\label{prop:fact_2}
    Let $f(t) \in \PirrA\sqcup\PirrB$.
    The possible factorisations of $2$ in $K$ are as follows and all of them do occur.
        \begin{enumerate}[(i)]
        \item\label{2inPA} If $f(t) \in\PirrA$ then
        \begin{itemize}
        \item if $2$ is inert in $K^+$ then $2\cO = \frM_1\frM_2$ with $\cO/\frM_1 \simeq \cO/\frM_2 \simeq \F_4$ and $\frM_1 = \overline{\frM}_2$;
        \item if  $2$ is split in $K^+$ then $2\cO = \frM_1\frM_2$ with $\cO/\frM_1 \simeq \cO/\frM_2 \simeq \F_4$ and $\frM_1 = \overline{\frM}_1$ and $\frM_2 = \overline{\frM}_2$;
        \item if $2$ is ramified in $K^+$ then $2\cO = \frM^2$ with $\cO/\frM \simeq \F_4$ and $\frM = \overline{\frM}$;
        \end{itemize}
        \item\label{2inPB} If $f(t) \in\PirrB$ then there is a unique maximal ideal $\frm$ of $\cO^+$ above $2$ which can either ramify, split or stay inert in $K$.
    \end{enumerate}
\end{proposition}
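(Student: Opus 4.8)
The plan is to exploit the structure of $K$ as a CM field together with three facts established above: $K/\Q$ is Galois (Proposition~\ref{prop:galois}); for $f(t)\in\PirrA$ one has $\Q(\zeta_3)\subseteq K$ (Lemma~\ref{lemma:MinK}) and $K/K^+$ is unramified (the contrapositive of the first half of Lemma~\ref{lem:if_ram_then_B}); and for $f(t)\in\PirrB$ the prime $2$ ramifies in $K^+$ (Lemma~\ref{lem:ram_ram}).

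For part~\eqref{2inPA} I would first pin down the residue degrees. Since $K^+$ is totally real and $\Q(\zeta_3)$ is imaginary, $K$ has two distinct quadratic subfields, so $\Gal(K/\Q)$ has order $4$ with at least two subgroups of index $2$ and hence equals $(\Z/2\Z)^2$; in particular no prime of $\cO$ is inert over $\Q$. As $2$ is inert in $\Q(\zeta_3)$, every prime of $\cO$ above $2$ has residue degree divisible by $2$, hence exactly $2$, so all residue fields are $\F_4$ and $2\cO$ equals $\frM_1\frM_2$ or $\frM^2$. I would then read the factorisation off from the behaviour of $2$ in $K^+$: since $K/K^+$ is unramified, $2$ does not ramify in $K/K^+$, and the constraint $\cO/\frM\cong\F_4$ forces the unique prime $\frm$ over $2$ to split in $K/K^+$ when $2$ is inert in $K^+$ (where $\cO^+/\frm\cong\F_4$), and forces $\frm$, respectively each $\frm_i$, to stay inert in $K/K^+$ when $2$ ramifies, respectively splits, in $K^+$ (where the residue fields are $\F_2$). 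In these latter two cases the primes of $\cO$ above $2$ are extended from $\cO^+$, hence fixed by complex conjugation, giving $\frM=\overline{\frM}$ and $\frM_i=\overline{\frM}_i$. In the first case $2\cO=\frM_1\frM_2$, and I would show $\overline{\frM}_1=\frM_2$ by locating the decomposition field of $\frM_1$: it is a quadratic subfield of $K$ in which $2$ splits completely, so it is neither $K^+$ nor $\Q(\zeta_3)$ (in both of which $2$ is inert), hence it is the third quadratic subfield $\Q(\sqrt{-3d})$ with $K^+=\Q(\sqrt d)$; this field is imaginary, so complex conjugation does not lie in the decomposition group of $\frM_1$, whence $\overline{\frM}_1\neq\frM_1$.

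Part~\eqref{2inPB} is short: $2$ ramifies in the real quadratic field $K^+$ by Lemma~\ref{lem:ram_ram}, so $2\cO^+=\frm^2$ for a unique prime $\frm$, and in the quadratic extension $K/K^+$ this $\frm$ can only ramify, split, or stay inert; by Lemma~\ref{lem:ram_tot_ram} the ramified case occurs exactly when $b=2-2q$ with $q$ odd.

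To see that every case occurs I would exhibit explicit Weil polynomials and check membership in $\PirrA$ or $\PirrB$ using Theorem~\ref{thm:no_g1g2}, Definition~\ref{df:partition} and Lemma~\ref{lemma:PirrB_char}. For \eqref{2inPA}, taking $f(t)=t^4-19t^2+361$ over $\F_{19}$ gives $K^+=\Q(\sqrt{57})$ with $2$ split; $f(t)=t^4-13t^2+169$ over $\F_{13}$ gives $K^+=\Q(\sqrt{39})$ with $2$ ramified; $f(t)=t^4-7t^2+49$ over $\F_{7}$ gives $K^+=\Q(\sqrt{21})$ with $2$ inert. For \eqref{2inPB}, $f(t)=t^4-4t^2+9$ over $\F_3$ has $2$ totally ramified in $K$ (cf.\ Lemma~\ref{lem:ram_tot_ram}); $f(t)=t^4-9t^2+81$ over $\F_9$ has $K=\Q(\zeta_{12})$ with $\frm$ inert in $K$; and $f(t)=t^4-3t^2+4$ over $\F_2$ has $K=\Q(i,\sqrt{7})$ with $\frm$ split in $K$. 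I expect the main obstacle to be the bookkeeping of the complex-conjugation action in part~\eqref{2inPA}: counting residue degrees and matching with $K^+$ is routine, but correctly identifying the decomposition field and tracking which of the three quadratic subfields is the totally real one needs care; assembling a minimal set of realising examples is the other slightly delicate point.
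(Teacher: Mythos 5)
Your proof is correct, and it follows broadly the same structure as the paper's proof, with a couple of genuine variations worth noting. For part~\eqref{2inPA} the paper first uses Lemma~\ref{lemma:splitting_mod_2} (the mod-$2$ factorisation of $f$) to rule out $2$ being inert in $K$, whereas you deduce this directly from the observation that $K^+$ and $\Q(\zeta_3)$ are two distinct quadratic subfields of $K$, so $\Gal(K/\Q)\cong(\Z/2\Z)^2$ is non-cyclic and no prime can be inert; this is a cleaner, more structural route that dispenses with the mod-$2$ computation in this step. Both arguments then agree that every prime of $\cO$ above $2$ has residue field $\F_4$ (via $\Q(\zeta_3)\subseteq K$) and that $K/K^+$ is unramified, and proceed by the same case analysis on the splitting behaviour of $2$ in $K^+$. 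Your justification of $\frM_1=\overline{\frM}_2$ via the decomposition field being the imaginary quadratic subfield distinct from $K^+$ and $\Q(\zeta_3)$ is more elaborate than necessary --- when $\frm$ splits in $K/K^+$, the nontrivial element of $\Gal(K/K^+)$ must exchange the two primes above $\frm$, and that element is complex conjugation --- but it is correct. Part~\eqref{2inPB} is the same as the paper. Finally, for existence the paper refers to a computational search (Example~\ref{ex:table}), while you exhibit explicit Weil polynomials realising each case; your choices check out, noting that for $\PirrA$ with $a=0$ one has $K^+=\Q(\sqrt{3q})$, so $q=7,19,13$ give $2$ inert, split, ramified respectively, and the three $\PirrB$ polynomials you give realise the ramified ($b=2-2q$, $q$ odd, by Lemma~\ref{lem:ram_tot_ram}), inert, and split cases. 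This makes the existence claim self-contained, which is a small improvement in exposition.
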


\begin{proof}
    By Lemma~\ref{lemma:splitting_mod_2}, $2$ is not inert in $K$.
    We now show that $2$ cannot be totally split in $K$.
    If $f(t)\in \PirrB$ then $2$ ramifies in $K^+$ by Lemma~\ref{lem:ram_ram} and we are done.
    Assume then that $f(t)\in \PirrA$.
    Then, by Lemma~\ref{lemma:MinK}, $K$ contains $M=\Q(\zeta_3)$.
    Note that $2$ is inert in $M$, so $2$ is not totally split in~$K$.

    Since $K$ is Galois by Proposition~\ref{prop:galois}, then the possible factorisations are $2\cO = \frM^4$, $2\cO = \frM_1^2\frM_2^2$, $2\cO = \frM_1\frM_2$ and $2\cO = \frM^2$.

    We start by proving \ref{2inPA}.   
    So, assume that $f(t) \in \PirrA$.
    Then $M=\Q(\zeta_3)$ is a subfield of $K$ by Lemma~\ref{lemma:MinK}.
    Since $2$ is inert in $M$ then $2$ cannot be totally ramified in $K$.
    Also, $K/K^+$ is unramified by Lemma~\ref{lem:if_ram_then_B}. 
    If $2$ is inert in $K^+$ then we must have $2\cO = \frM_1\frM_2$, with $\frM_1 = \bar\frM_2$.
    If $2$ splits in $K^+$ then $2\cO= \frm_1\frm_2$, that is each maximal ideal of $O^+$ above $2$ stays inert in $K$.
    In particular, $\frM_1=\overline\frM_1$ and $\frM_2=\overline\frM_2$.
    If $2$ is ramified in $K^+$ then $2\cO = \frM^2$ must occur and $\frM$ is stable under conjugation.
    Part \ref{2inPB} is Lemma~\ref{lem:ram_ram}.
    A search on the Weil polynomials shows that all listed factorisation occur; see Example~\ref{ex:table}. 
\end{proof}

\begin{figure}[h]
    \begin{tikzpicture}
        %left
        \node (K) at (-5.5,3) {$K$};
        \node (K+) at (-5.5,0) {$K^+$};
        \node (Q) at (-5.5,-3) {$\Q$};
        \draw[-] (K) to (K+);
        \draw[-] (K+) to (Q);

        %centre
        \node (two) at (0,-3) {$2$};

        %split
        \node (split) at (0,0) {$\frm_1\frm_2$};
        \node (splitinert) at (0,3) {$\frm_1\frm_2$};
        \draw[-] (two) -- (split) node[midway,rectangle,draw=white, fill=white] {\textbf{npp}};
        \draw[-] (split) to (splitinert) ;

        %inert
        \node (inert) at (-3,0) {$\frm=2\cO^+$};
        \node (inertsplit) at (-3,3) {$\frM_1\frM_2$};
        \draw[-] (two) -- (inert) node[midway,left] {\textbf{npp}};
        \draw[-] (inert) to (inertsplit);

        %ram
        \node (ram) at (3,0) {$\frm^2$};
        \node (raminert) at (1.6,3) {$\frm^2$};
        \node (ramsplit) at (3,3) {$\frM_1^2\frM_2^2$};
        \node (ramram) at (4.4,3) {$\frM^4$};
        \draw[-] (two) -- (ram) node[midway,right] {\small \textbf{npp,Wres}};
        \draw[-] (ram) -- (ramsplit) node[midway,rectangle,draw=white, fill=white] {\small \textbf{Wres}};
        \draw[-] (ram) -- (ramram) node[midway,right] {\small \textbf{Wres}};
        \draw[-] (ram) -- (raminert) node[midway,left] {\small \textbf{npp,Wres}};
    \end{tikzpicture}
    \caption{Possible factorisations of the prime $2$ in $K/K^+/\Q$.}
\end{figure}
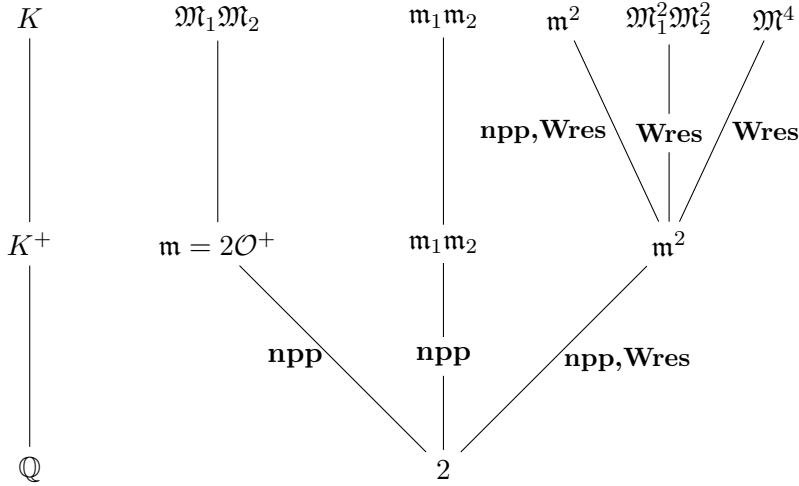

\section{Kernels of polarisations of abelian varieties}\label{sec:kernels}
In \cite{Howe95, Howe96}, Howe studied when an isogeny class of abelian varieties contains at least one variety with a polarisation with prescribed kernel by means of Grothendieck groups of finite modules over orders associated to the isogeny class.
In this section, everything is recalled from \cite{Howe96}, but Theorem~\ref{thm:deg_ord}, Lemmas~\ref{lemma:tensor_ZR} and \ref{lem:P_is_tensor_lin_fact} which are novel, to the best of our knowledge.

\smallskip
Fix an irreducible Weil polynomial $f(t)$ determining an isogeny class $\cC$ of abelian varieties over $\Fq$.
We stress that in this section we do not make assumptions on the dimension.
We set, as usual, $K=\Q[x]/f(t)=\Q(\pi)$ and $K^+=\Q(\pi+q/\pi)$.
Let $R=\Z[\pi,q/\pi]\subset K$ and $R^+=\Z[\pi+q/\pi]\subset K^+$.
In what follows, $S$ will denote a generic order in $K$, possibly satisfying additional hypothesis.
Denote by $\Mod_S$ the category of finite length $S$-modules.
Since $S$ is an order, it follows that an $S$-module $M$ has finite length if and only if $M$ is a finite set.
% Proof: One direction is clear. We now do the other one. Let $M$ have finite length and consider any composition series of $M$:
% \[ 0 \subset N_1 \subset N_2 \subset \ldots \subset N_n = M. \]
% Each quotient $Q_i=N_i/N_{i-1}$ is a simple $S$-module by assumption. Pick a non-zero element $x$ in $Q$ and consider the $S$-linear morphism $\varphi:S\to Q$ defined by $1\mapsto x$. Since $Q$ is simple and $\varphi$ is non-zero, the image of $\varphi$ must be $Q$ and the kernel must be a maximal ideal $\p_i$ of $S$.
% Hence $|M| = \prod_i |S/\p_i|$ which is a finite number since all ideals of $S$ containing a non-zero divisor have finite index in $S$. QED
We denote by $|M|$ the number of elements of $M$.

Let $G(\Mod_S)$ be the Grothendieck group of $\Mod_S$ which is defined as the quotient of the free abelian group on the isomorphism classes of objects in $\Mod_S$ by the subgroup generated by the expressions $M-M'-M''$ for all exact sequences $0\to M'\to M\to M''\to 0$ in $\Mod_S$.
For $M\in\Mod_S$, we denote its class in $G(\Mod_S)$ by $[M]_S$.
The association $M\mapsto |M|$ induces a group homomorphism $G(\Mod_S) \to \Q^*$.
Note that $G(\Mod_S)$ is a free abelian group on the simple objects of $\Mod_S$.
Every such simple object is $S$-linearly isomorphic to $S/\frN$ for a maximal $S$-ideal~$\frN$.
An element of $G(\Mod_S)$ is called \emph{effective} if it is a sum of positive multiples of simple objects.
Let $\alpha=a/b$ be an element of $K^\times$.
The \emph{principal element} generated by $\alpha$ is the element $\Pr_S(\alpha)=[S/a S]_S-[S/bS]_R$ of $G(\Mod_S)$.
Note that $\Pr_S$ induces a group homomorphism from $K^\times$ to $G(\Mod_S)$.

Assume now that the order $S$ satisfies $S=\overline{S}$.
For $M \in \Mod_S$, define $\widehat{M}=\Hom_\Z(M,\Q/\Z)$ where the $S$-module structure is defined by $(r\psi)(m)=\psi(\overline{r}m)$ for every $\psi\in \widehat{M}$, $r\in S$ and $m \in M$.
The association $M\mapsto \widehat{M}$ induces a duality on $\Mod_S$, which defines an involution $\overline \cdot$ on $G(\Mod_S)$ by setting $\overline{[M]}_S=[\widehat{M}]_S$.
An element $P$ of $G(\Mod_S)$ is \emph{symmetric} if $P=\overline P$.

\begin{lemma}\label{lemma:symmetric}
    Let $S$ be an order in $K$ such that $S=\overline S$.
    Let $\frM$ a maximal ideal of $S$ and $I \subseteq S$ be a fractional $S$-ideal.
    Then
    \[ \overline{[S/\frM]}_S=[S/\overline{\frM}]_S \] 
    and
    \[ \overline{[S/I]}_S = [S/\overline I]_S.  \] 
\end{lemma}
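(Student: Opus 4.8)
The claim is that the duality $M \mapsto \widehat{M}$ on $\Mod_S$ behaves on Grothendieck classes exactly as one expects: it sends $[S/\frM]_S$ to $[S/\overline{\frM}]_S$ for a maximal ideal $\frM$, and more generally $[S/I]_S$ to $[S/\overline{I}]_S$ for any fractional $S$-ideal $I$. Since $G(\Mod_S)$ is free on the simple objects $S/\frM$, the first statement is the heart of the matter and the second will follow by dévissage.

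The plan is as follows. First I would prove the first identity. The module $S/\frM$ is a simple $S$-module, hence a one-dimensional vector space over the residue field $k(\frM) = S/\frM$; as an abelian group it is finite of order $|S/\frM|$. Its Pontryagin dual $\widehat{S/\frM} = \Hom_\Z(S/\frM, \Q/\Z)$ is again a finite abelian group of the same order, and the twisted $S$-action $(r\psi)(m) = \psi(\overline{r}m)$ makes it an $S$-module on which $\frM$ acts by zero precisely when $\overline{\frM}$ annihilates $S/\frM$ in the untwisted sense — that is, $\widehat{S/\frM}$ is annihilated by $\overline{\frM}$. Since $\widehat{S/\frM}$ is nonzero and $\overline{\frM}$ is maximal (conjugation is a ring automorphism of $S$ because $S = \overline{S}$), $\widehat{S/\frM}$ is a nonzero $S/\overline{\frM}$-module of $\Z$-length equal to the length of $S/\frM$; but $\dim_{k(\overline{\frM})} = 1$ since $|S/\overline{\frM}| = |\overline{S/\frM}|=|S/\frM|$. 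Hence $\widehat{S/\frM} \cong S/\overline{\frM}$ as $S$-modules, giving $\overline{[S/\frM]}_S = [S/\overline{\frM}]_S$.

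For the second identity, I would argue by induction on $|S/I|$ (note $S/I$ is finite for $I$ a fractional ideal with $I \subseteq S$; the general fractional-ideal case reduces to this after multiplying by an integer, using that $\Pr_S$ is a homomorphism and $\overline{\cdot}$ is additive). If $I = S$ both sides are zero. Otherwise pick a maximal ideal $\frM \supseteq I$; then $\frM/I \hookrightarrow S/I$ and there is a short exact sequence $0 \to \frM/I \to S/I \to S/\frM \to 0$ in $\Mod_S$, so $[S/I]_S = [\frM/I]_S + [S/\frM]_S$. Applying the duality functor (which is exact, since $\Hom_\Z(-,\Q/\Z)$ is exact on finite abelian groups) and the induced additivity of $\overline{\cdot}$ on $G(\Mod_S)$, together with the first identity, reduces the claim to $\overline{[\frM/I]}_S = [\overline{\frM}/\overline{I}]_S$. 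Now $\frM/I$ is a fractional $S$-ideal quotient with $|S/\frM/I|$ strictly smaller — more precisely I would replace $\frM/I$ by an honest ideal: since $I \subseteq \frM \subseteq S$, write $[\frM/I]_S = [S/I]_S - [S/\frM]_S$ and instead induct directly on $[S:I]$ by comparing with $[S : \frM]< [S:I]$ and $[\frm : I]$ where $\frM = \frm$ embeds as a submodule; the cleanest route is: $[S/I]_S = [S/\frM]_S + [\frM/I]_S$, and $\frM/I \cong \frM \otimes_S (S/J)$-type considerations let one identify it, but more simply $\frM/I$ is itself a cyclic-in-the-limit finite $S$-module of smaller order than $S/I$, so induction applies after noting $\widehat{\frM/I} \cong (\widehat{S/I})/(\widehat{S/\frM})$-type complementary quotient, whose class is $\overline{[S/I]}_S - \overline{[S/\frM]}_S = [S/\overline{I}]_S - [S/\overline{\frM}]_S = [\overline{\frM}/\overline{I}]_S$ by the inductive hypothesis applied to the ideal $\overline{I}$ with its maximal overideal $\overline{\frM}$. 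Reassembling gives $\overline{[S/I]}_S = [S/\overline{I}]_S$.

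The main obstacle is the bookkeeping in the second part: one must be careful that "$\frM/I$" is handled as a class in $G(\Mod_S)$ via the exact sequence rather than as a principal quotient, and that the conjugation $\overline{\cdot}$ genuinely permutes the maximal ideals (this is where $S = \overline{S}$ is essential, so that $\overline{\cdot}$ is a ring automorphism and $\overline{\frM}$ is again maximal with $|S/\overline{\frM}| = |S/\frM|$). Everything else — exactness of Pontryagin duality on finite modules, freeness of $G(\Mod_S)$ on simple objects, and the computation that the twisted dual of $S/\frM$ is killed by $\overline{\frM}$ — is routine once the first identity is in place.
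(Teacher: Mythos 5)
Your first identity is handled essentially as the paper does: you show that $\overline{\frM}$ annihilates $\widehat{S/\frM}$ (using that conjugation is a ring automorphism of $S$, so $\overline{\frM}$ is again maximal), and then count to conclude $\widehat{S/\frM}\cong S/\overline{\frM}$. That part is fine.

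The second identity, however, contains a genuine circularity. You reduce, via the exact sequence $0\to \frM/I\to S/I\to S/\frM\to 0$, to showing $\overline{[\frM/I]}_S = [\overline{\frM}/\overline{I}]_S$, and you then assert that $\overline{[S/I]}_S - \overline{[S/\frM]}_S = [S/\overline{I}]_S - [S/\overline{\frM}]_S$ ``by the inductive hypothesis applied to the ideal $\overline{I}$.'' But $|S/\overline{I}| = |S/I|$, so the instance for $\overline{I}$ is not smaller than the instance for $I$ --- it is the conjugate of the very statement you are trying to prove. The induction does not close: you have assumed what you set out to show. The attempted identification of $\frM/I$ with an ideal quotient of strictly smaller size is also never actually carried out, so there is no base for a genuine induction.

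The paper sidesteps all of this. Since $G(\Mod_S)$ is free on the simple objects, one writes $[S/I]_S = \sum_i n_i [S/\frM_i]_S$, where the $\frM_i$ are the maximal ideals containing $I$ and $n_i = \mathrm{length}_{S_{\frM_i}}(S/I)_{\frM_i}$. Conjugation is a ring automorphism, so $\overline{I}$ has the same localization lengths at the conjugate primes, i.e.\ $[S/\overline{I}]_S = \sum_i n_i [S/\overline{\frM}_i]_S$. Applying $\overline{\cdot}$ term by term and invoking the first identity finishes the proof with no induction. Equivalently in your dévissage spirit: exactness of Pontryagin duality makes $\overline{\cdot}$ a well-defined additive involution on $G(\Mod_S)$, so it suffices to check the claim on the simple generators $S/\frM$, which is exactly your first part; the missing ingredient is the observation that $[S/I]_S$ and $[S/\overline{I}]_S$ have the \emph{same} multiplicities at conjugate maximal ideals, rather than an induction on $|S/I|$.
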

\begin{proof}
    Observe that $\overline{[S/\frM]}_S=[S/\overline{\frM}]_S$ if and only if there is an $S$-linear isomorphism
    \[ \widehat{S/\frM} \simeq S/\overline{\frM}. \]
    Let $\psi:S/\frM \to \Q/\Z$ be a non-zero additive homomorphism.
    Consider the $S$-linear morphism $f:S\to \widehat{S/\frM}$ defined by $f(1) = \psi$.
    Note that $\overline\frM \subseteq \ker(f)$.
    Since $\frM$ is maximal and $f$ is non-zero, we get that $\overline{\frM} = \ker(f)$.
    By Pontrjagin duality, 
    % Pontrjagin duality is a very special case of Matlis duality: one could refer to Ooishi, ``Matlis duality and the width of a module'', Theorem 1.7. But it wildly overkill, since we need only a statement about sizes and so it is enough to consider the $\Z$-module structure of the our quotients.  
    % Here is a proof of the statement: for any finite abelian group $A$, we have $|A|=|\Hom_\Z(A,\Q/\Z)|$. Writing $A$ as a product of cyclic abelian groups, we reduce to the case $A=\Z/N\Z$, which further reduces to the case $N=p^r$ for some prime $p$. So, it suffices to show that $\Q/\Z$ contains exactly $p^r$ elements of order $p^r$. It is not too hard to show that these are precisely $a/p^r+\Z$ for $0\leq a\leq p^r-1$.
    we have that 
    \[ |\widehat{S/\frM}| = |S/\overline\frM|. \]
    Hence the injective $S$-linear morphism $S/\overline\frM \into \widehat{S/\frM}$ induced by $f$ is an isomorphism.  

    For the second equality in the statement, write
    \[ [S/I]_S = \sum_{i=1}^n n_i[ S/\frM_i]_S, \]
    where the $\frM_i$'s are the maximal ideals of $S$ containing $I$, and $n_i$ is the length of $(S/I)_{\frM_i}$ as an $S_{\frM_i}$-module.
    Then 
    \[ \overline{[S/I]}_S = \sum_{i=1}^n n_i\overline{[S/\frM_i]}_R= \sum_{i=1}^n n_i[S/\overline{\frM}_i]_S= [S/\overline{I}]_S,\]
    where the central equality holds by the first part.
\end{proof}

Again, let $S$ be an order in $K$ such that $S=\overline S$.
Set $S^+=S \cap K^+$. 
By considering finite the $S$-modules as finite $S^+$-modules, we obtain the \emph{norm} homomorphism $N_{S/S^+}([M]_S)=[M]_{S^+}$ from $G(\Mod_S)$ to $G(\Mod_{S^+})$.
Define $Z(S)$ as the set of symmetric elements of 
\[ \ker\left( G(\Mod_S) \overset{N_{S/S^+}\otimes \frac{\Z}{2\Z}}{\longrightarrow} G(\Mod_{S^+})\otimes_\Z \frac{\Z}{2\Z} \right). \]
Let $B(S)$ be the subgroup $\{ P+\overline P : P \in G(\Mod_S) \}$ of $Z(S)$.
Finally, set
\[ \cB(S)=\frac{Z(S)}{(B(S)\cdot \Pr_S(K^\dagger))}, \]
where $K^\dagger$ is the groups of squares of totally positive elements of $K^+$.

Let $\KerC$ be the category whose objects are finite commutative group schemes that can be embedded as closed subgroup-schemes in some abelian variety in the isogeny class $\cC$.
Every object of $\KerC$ is of the form $\ker(\varphi)$ for some isogeny $\varphi:A\to B$, where $A$ and $B$ are elements in $\cC$.

\begin{definition}
    A finite group scheme $G$ is \emph{attainable} in $\cC$ if there is a variety in $\cC$ with a polarisation whose kernel is isomorphic to $G$.
\end{definition}

Let $G(\KerC)$ be the associated Grothendieck group, which is defined analogously to the one of an order.
For an element $G$ in $\KerC$ we denote its class in $G(\KerC)$ by $[G]_\cC$.
Moreover, we denote by $\widehat G$ its Cartier dual.

The category $\KerC$ splits into a product of four subcategories $\mathcal{K}_{rr}$, $\mathcal{K}_{rl}$, $\mathcal{K}_{lr}$ and $\mathcal{K}_{ll}$, whose objects are respectively reduced elements of $\KerC$ with reduced Cartier dual, reduced elements of $\KerC$ with local Cartier dual, local elements of $\KerC$ with reduced Cartier dual, and local elements of $\KerC$ with local Cartier dual.
For an isogeny $\varphi:A\to B$, the set of geometric points $G(\overline \Fq)$ of $G=\ker\varphi$ admits a natural $R$-module structure by identifying $\pi$ with the Frobenius of $A$.
% If $G$ is in $\mathcal{K}_{lr}$, then its Cartier dual $\widehat{G}$ is in $\mathcal{K}_{rl}$.
If $\mathcal{K}_{ll}$ is non-empty then it contains a unique simple element $\alpha_p$.
This occurs if and only if $\cC$ is non-ordinary.

A crucial point of \cite{Howe96} is that $G(\KerC)$ and $G(\Mod_R)$ are isomorphic, as we now recall.
Consider the association given by the following rules:
\begin{itemize}
    \item for $G$ in $\mathcal{K}_{rr}\times \mathcal{K}_{rl}$, set $\epsilon([G]_\cC) = [ G(\overline \Fq)  ]_R $,
    \item for $G$ in $\mathcal{K}_{lr}$, set $\epsilon([G]_\cC) = [ \widehat{\widehat{G}(\overline \Fq)}  ]_R $,
    \item set $\epsilon([\alpha_p]_\cC) = [ M ]_R $ where $M=\Z/p\Z$ with the $R$-module structure induced by letting $\pi$ and $q/\pi$ act as $0$.
\end{itemize}
By extending $\epsilon$ by linearity to $G(\KerC)$, we obtain a group homomorphism $\epsilon: G(\KerC)\to G(\Mod_R)$.

\begin{theorem}[{\cite[Thm.~3.5]{Howe96}}]\label{thm:howe_isom}
    The group homomorphism $\epsilon : G(\KerC) \to G(\Mod_R)$ is an isomorphism.
\end{theorem}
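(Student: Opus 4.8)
The strategy is to check that $\epsilon$ is a well-defined group homomorphism and then exhibit an explicit inverse, working subcategory by subcategory along the decomposition $\KerC=\mathcal K_{rr}\times\mathcal K_{rl}\times\mathcal K_{lr}\times\mathcal K_{ll}$. First I would observe that, since $G(\KerC)$ is free abelian on the classes of simple objects, it suffices to understand the simple objects of each $\mathcal K_{\bullet\bullet}$ and to see that $\epsilon$ carries them to a $\Z$-basis of $G(\Mod_R)$. The simple objects of $G(\Mod_R)$ are the classes $[R/\frN]_R$ for maximal ideals $\frN$ of $R$, and these split according to the behaviour of the residue characteristic $\ell$: for $\ell\neq p$ a simple module $R/\frN$ is ``étale'' on both the $\pi$ and the $q/\pi$ side, while for $\ell=p$ the maximal ideals of $R$ above $p$ sort into those containing $\pi$ but not $q/\pi$, those containing $q/\pi$ but not $\pi$, and (in the non-ordinary case) the single one containing both. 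This trichotomy at $p$ is precisely what matches the four-fold splitting of $\KerC$: $\mathcal K_{rr}$ and the prime-to-$p$ part together with the $\mathfrak l$-part-at-$p$ of $\mathcal K_{rl}$, the dual picture for $\mathcal K_{lr}$, and the exceptional local-local piece $\alpha_p$.

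Concretely, the key steps are: (1) For $G$ reduced (i.e. étale), the geometric points $G(\overline\Fq)$ form a finite $\Z[\pi]$-module on which $q/\pi$ acts through the Verschiebung, hence a finite $R$-module; show $G\mapsto [G(\overline\Fq)]_R$ is additive on short exact sequences and bijective on simple objects of $\mathcal K_{rr}\times\mathcal K_{rl}$ — this is essentially contravariant Dieudonné/Tate module theory plus the classical equivalence between finite étale group schemes over $\Fq$ and finite $\Gal(\overline\Fq/\Fq)$-modules, with Frobenius $\mapsto\pi$. (2) For $G$ in $\mathcal K_{lr}$ (local with étale dual), apply step (1) to the Cartier dual $\widehat G$, which is reduced, and then dualise; the extra hats in the definition of $\epsilon$ on $\mathcal K_{lr}$ are there exactly to make this composite land in $G(\Mod_R)$ compatibly, using $\overline\pi=q/\pi$. (3) For the local-local simple object $\alpha_p$ (which exists iff $\cC$ is non-ordinary), check directly that $\alpha_p$ has order $p$ and that both $\pi$ and $q/\pi$ act nilpotently, so the assignment $\epsilon([\alpha_p]_\cC)=[\Z/p\Z]_R$ with $\pi=q/\pi=0$ picks out the unique maximal ideal of $R$ containing both $\pi$ and $q/\pi$. (4) Assemble: since every maximal ideal of $R$ is hit exactly once across the three cases, $\epsilon$ sends a basis to a basis, hence is an isomorphism; construct the inverse by sending each $[R/\frN]_R$ to the class of the corresponding simple group scheme (an étale group scheme with the prescribed Galois action when $\frN\nmid p$, its dual construction, or $\alpha_p$).

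The main obstacle is the bookkeeping at $p$: one must verify that the combinatorics of maximal ideals of $R=\Z[\pi,q/\pi]$ lying over $p$ — and in particular which of them contain $\pi$, which contain $q/\pi$, and whether any contains both — matches exactly the list of simple objects in $\mathcal K_{rl}$, $\mathcal K_{lr}$ and $\mathcal K_{ll}$ given by Honda–Tate theory and the $p$-adic description of the isogeny class (e.g. via the Newton polygon / the slope decomposition of the $p$-divisible group). That this matching is clean relies on $R$ being the order $\Z[\pi,q/\pi]$ rather than the maximal order, so that the $\pi$-adic and $q/\pi$-adic filtrations are visible integrally. Once this dictionary is set up, additivity of $\epsilon$ on exact sequences is routine (exactness of geometric points on étale group schemes, exactness of Cartier duality, and the fact that both sides are Grothendieck groups where only the classes of simple objects matter), and the construction of the inverse is formal. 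I expect the proof to be essentially a careful reference to contravariant Dieudonné theory for $\mathcal K_{ll}$ and to the étale–Galois equivalence for the reduced pieces, with the genuinely new content being the identification of simple $R$-modules with simple objects of $\KerC$.
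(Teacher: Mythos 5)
The paper does not prove Theorem~\ref{thm:howe_isom}; it is quoted verbatim from \cite[Thm.~3.5]{Howe96} and the authors rely entirely on Howe's argument, so the relevant comparison is with Howe's original proof rather than with anything in this paper. Your plan reconstructs the architecture of that proof faithfully: the four-fold splitting of $\KerC$; geometric points with Frobenius acting as $\pi$ on the reduced pieces $\mathcal{K}_{rr}\times\mathcal{K}_{rl}$; Cartier duality followed by module duality on $\mathcal{K}_{lr}$ (which is exactly what the two hats in the definition of $\epsilon$ encode, via $\overline\pi=q/\pi$); the exceptional $\alpha_p$ matched to $\Z/p\Z$ with $\pi=q/\pi$ acting as $0$; reduction to simple objects because both Grothendieck groups are free abelian on their simples; and additivity coming from exactness of $G\mapsto G(\overline\Fq)$ on étale group schemes and from Cartier duality being an exact anti-equivalence.

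The gap is exactly where you flag it, but it is more than bookkeeping and your sketch asserts rather than proves the key matching. Two points need real arguments. First, the count at $p$: one must show that $(p,\pi,q/\pi)$ is the unique maximal ideal of $R$ containing both $\pi$ and $q/\pi$ when $\cC$ is non-ordinary, and that no such ideal exists when $\cC$ is ordinary; this rests on the fact that $R/(\pi,q/\pi)$ is a quotient of $\Z/q\Z$ together with the structure of $R\otimes\Z_p$ in the two cases. Second, and more substantively, surjectivity onto the simple $R$-modules: for every maximal ideal $\frN$ of $R$ you must exhibit a finite group scheme that is genuinely an object of $\KerC$ — not merely a finite group scheme over $\Fq$, but one that embeds as a closed subgroup scheme of an abelian variety in the isogeny class $\cC$ — whose image under $\epsilon$ is $[R/\frN]_R$. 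This is where Tate's theorems on Tate modules and Dieudonné modules of abelian varieties over finite fields enter (roughly: at each finite place one can choose a variety in $\cC$ whose local Tate or Dieudonné module is free of rank one over the completion of $R$, so every simple quotient of $R$ at that place is realised as a kernel of an isogeny within $\cC$). Your plan gestures at Honda--Tate and the slope decomposition but stops short of this step, and it is where most of Howe's technical work lives.
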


\begin{definition}
    A finite $R$-module $M$ is \emph{attainable} if $\epsilon([G]_\cC) = [M]_R$ for some attainable $G$ in $\cC$.
\end{definition}

By the very construction of $\epsilon$, we can deduce the following result.
\begin{theorem}\label{thm:deg_ord}
    Let $\varphi:A\to B$ be an isogeny between elements of $\cC$.
    Let $M$ be a finite $R$-module such that $\epsilon([\ker \varphi]_\cC) = [M]_R$.
    Then
    \[ \deg(\varphi) = |M|. \]
\end{theorem}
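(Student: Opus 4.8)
The plan is to observe that Theorem~\ref{thm:howe_isom} identifies two ``size'' homomorphisms. On the module side, the text already records that $\nu\colon G(\Mod_R)\to\Q^\times$, $[M]_R\mapsto|M|$, is a group homomorphism. On the kernel side, for an isogeny $\varphi\colon A\to B$ one has $\deg(\varphi)=|\ker(\varphi)|$, where $|G|$ denotes the rank (order) of the finite commutative group scheme $G$; since the rank is multiplicative along short exact sequences of finite group schemes, $G\mapsto|G|$ descends to a group homomorphism $\delta\colon G(\KerC)\to\Q^\times$. The theorem is then exactly the assertion that $\delta=\nu\circ\epsilon$, evaluated at the effective class $[\ker(\varphi)]_\cC$.

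Since $\delta$ and $\nu\circ\epsilon$ are both additive on $G(\KerC)$, and $\KerC$ splits as the product of the four subcategories $\mathcal{K}_{rr}$, $\mathcal{K}_{rl}$, $\mathcal{K}_{lr}$, $\mathcal{K}_{ll}$, it suffices to verify $\delta(x)=\nu(\epsilon(x))$ on classes coming from each factor. First I would treat $G$ in $\mathcal{K}_{rr}\times\mathcal{K}_{rl}$: here $G$ is étale, so $|G|=|G(\overline\Fq)|$, and $\epsilon([G]_\cC)=[G(\overline\Fq)]_R$ by definition, whence $\nu(\epsilon([G]_\cC))=|G(\overline\Fq)|=|G|$. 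Next, for $G$ in $\mathcal{K}_{lr}$ the Cartier dual $\widehat G$ is étale, so $|G|=|\widehat G|=|\widehat G(\overline\Fq)|$; since $\epsilon([G]_\cC)=[\widehat{\widehat G(\overline\Fq)}]_R$ and Pontrjagin duality of finite abelian groups preserves cardinality, we again get $\nu(\epsilon([G]_\cC))=|\widehat G(\overline\Fq)|=|G|$. Finally, for the simple local--local object $\alpha_p$ (which exists precisely when $\cC$ is non-ordinary) one has $|\alpha_p|=p$ while $\epsilon([\alpha_p]_\cC)=[\Z/p\Z]_R$ has cardinality $p$; additivity then covers all of $\mathcal{K}_{ll}$.

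Combining the cases, writing an arbitrary $G\in\KerC$ as a product over the four subcategories gives $|G|=\prod|G_\bullet|$, and the case analysis yields $\nu(\epsilon([G]_\cC))=\prod|G_\bullet|=|G|$. Taking $G=\ker(\varphi)$ and using the hypothesis $\epsilon([\ker\varphi]_\cC)=[M]_R$ gives
\[ \deg(\varphi)=|\ker(\varphi)|=\nu([M]_R)=|M|. \]
I expect the only point requiring care to be the bookkeeping for the factor $\mathcal{K}_{lr}$: one must invoke that Cartier duality preserves the rank of a finite group scheme, and that $\widehat{\widehat G(\overline\Fq)}$ has the same cardinality as $\widehat G(\overline\Fq)$ by finite Pontrjagin duality. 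Everything else is a direct consequence of the construction of $\epsilon$ and of the multiplicativity of ranks, so there is no genuine obstacle.
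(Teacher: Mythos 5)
Your proof is correct and follows essentially the same route as the paper's: decompose $\KerC$ into the four subcategories, observe that rank and cardinality are both multiplicative (hence descend to homomorphisms on the Grothendieck groups), and match them up factor by factor via the construction of $\epsilon$. If anything, you are slightly more explicit than the paper at the key step: the paper isolates the $\alpha_p$ contribution and then asserts ``by construction of $M'$, we have $\rk(G')=|M'|$'' for $G'\in\mathcal{K}_{rr}\times\mathcal{K}_{rl}\times\mathcal{K}_{lr}$, whereas you actually carry out the needed verification in the $\mathcal{K}_{rr}\times\mathcal{K}_{rl}$ (étale) and $\mathcal{K}_{lr}$ (Cartier-dual-étale plus finite Pontrjagin duality) cases separately.
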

\begin{proof}
    Set $G = \ker\varphi$, so that $\deg(\varphi)$ equals the rank $\rk(G)$ of $G$ as a group scheme.
    There exist a non-negative integer $n$ and a group scheme $G' \in \mathcal{K}_{rr}\times \mathcal{K}_{rl} \times \mathcal{K}_{lr}$ such that
    \[ [G]_\cC = [ \alpha_p^n \times G' ]_\cC = n[ \alpha_p ]_\cC \oplus [ G' ]_\cC. \]
    It follows that
    \begin{equation}\label{eq:rk}
        \rk(G) =\rk(\alpha_p^n)\cdot \rk(G') = p^n\rk(G').
    \end{equation}
    Let $M_p$ be $\Z/p \Z$ with the trivial $R$-module structure and $M'$ be a finite $R$-module such that $\epsilon([G']_\cC) = [M']_R$.
    By Theorem~\ref{thm:howe_isom}, we have
    \[ [M]_R = n[M_p]_R \oplus [M']_R. \]
    Hence
    \begin{equation}\label{eq:ord}
        |M| = |M_p|^n \cdot |M'| = p^n|M'|.
    \end{equation}
    Since, by construction of $M'$, we have that $\rk(G')=|M'|$, by combining Equations~\eqref{eq:rk} and~\eqref{eq:ord}  it follows that $\rk(G)=|M|$.
\end{proof}

Since $R$ is a locally free $R^+$-module, the association $M \mapsto M \otimes_{R^+} R$ induces a well defined homomorphism $t_{R/R^+}:G(\Mod_{R^+})\to G(\Mod_R)$.

\begin{lemma}\label{lemma:tensor_ZR}
    Let $S$ be an order in $K$ such that $S=\overline S$ and that $S$ is locally free of rank $2$ over $S^+=S \cap K^+$.
    Then $Z(S) = t_{S/S^+}(G(\Mod_{S^+}))$.
    Moreover, if $M \in \Mod_S$ is such that $[M]_S \in Z(S)$ then $[M]_S=t_{S/S^+}(P')$ for some effective $P' \in G(\Mod_{S^+})$.
\end{lemma}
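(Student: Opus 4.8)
The plan is to reduce the equality $Z(S)=t_{S/S^+}(G(\Mod_{S^+}))$ to a local computation at the maximal ideals of $S^+$, using the elementary classification of $2$-dimensional algebras over a field. First I would observe that $G(\Mod_S)$, the norm $N_{S/S^+}$, the duality involution $\overline{\,\cdot\,}$ and the transfer $t_{S/S^+}$ all respect the decomposition of a finite $S$-module according to its support over $S^+$: writing $G(\Mod_S)=\bigoplus_\frm G_\frm$, where $\frm$ runs over the maximal ideals of $S^+$ and $G_\frm=\bigoplus_{\frM\mid\frm}\Z[S/\frM]_S$, one checks that $N_{S/S^+}(G_\frm)\subseteq\Z[S^+/\frm]_{S^+}$, that $\overline{\,\cdot\,}$ permutes the maximal ideals $\frM$ over a fixed $\frm$ (because $\overline\frm=\frm$; cf.~Lemma~\ref{lemma:symmetric}), and that $t_{S/S^+}([S^+/\frm]_{S^+})=[S/\frm S]_S\in G_\frm$. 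Hence both $Z(S)$ and $t_{S/S^+}(G(\Mod_{S^+}))$ are direct sums of their $\frm$-parts, and it suffices to prove $Z(S)\cap G_\frm=\Z\cdot[S/\frm S]_S$ for each $\frm$.

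Next I would fix $\frm$ and set $k=S^+/\frm$. The hypothesis $S^+=S\cap K^+$ together with local freeness of rank $2$ forces $[K:K^+]=2$, so $\overline{\,\cdot\,}$ is the nontrivial element of $\Gal(K/K^+)$ and $\Tr_{K/K^+}=\mathrm{id}+\overline{\,\cdot\,}$; also $A:=S/\frm S$ is a $k$-algebra of dimension $2$, and since $1$ extends to an $S^+_\frm$-basis $\{1,e\}$ of $S_\frm$ (Nakayama), $A$ is generated over $k$ by the image of $e$, a root of the reduction mod $\frm$ of the quadratic $x^2-\Tr_{K/K^+}(e)\,x+N_{K/K^+}(e)$ satisfied by $e$ over $S^+_\frm$. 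The three possibilities are: (a) $A\cong k\times k$, with two maximal ideals $\frM_1,\frM_2$ of $S$ over $\frm$ of residue degree $1$; (b) $A\cong k[\epsilon]/(\epsilon^2)$, with one maximal ideal $\frM$ of residue degree $1$ and $[S/\frm S]_S=2[S/\frM]_S$; (c) $A$ a quadratic field extension of $k$, with one maximal ideal $\frM$ of residue degree $2$ and $[S/\frm S]_S=[S/\frM]_S$. The key point is that $\overline{\,\cdot\,}$ induces on $A$ the $k$-automorphism sending the image of $e$ to the \emph{other} root of that quadratic; so in case (a) it interchanges $\frM_1$ and $\frM_2$, while in cases (b) and (c) it fixes the unique $\frM$ over $\frm$.

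Now a short computation finishes each case. An element $\sum_{\frM\mid\frm}n_\frM[S/\frM]_S$ is symmetric iff $n_\frM=n_{\overline\frM}$ for all $\frM$, and lies in $\ker(N_{S/S^+}\otimes\Z/2\Z)$ iff $\sum_{\frM\mid\frm}n_\frM[S/\frM:k]\equiv 0\bmod 2$. In case (a) symmetry forces $n_{\frM_1}=n_{\frM_2}$ (so the kernel condition is automatic), giving $Z(S)\cap G_\frm=\Z\cdot([S/\frM_1]_S+[S/\frM_2]_S)=\Z\cdot[S/\frm S]_S$; in case (b) symmetry is automatic and the kernel condition says $n_\frM$ is even, giving $Z(S)\cap G_\frm=2\Z\cdot[S/\frM]_S=\Z\cdot[S/\frm S]_S$; in case (c) both conditions are automatic, giving $Z(S)\cap G_\frm=\Z\cdot[S/\frM]_S=\Z\cdot[S/\frm S]_S$. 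Summing over $\frm$ proves $Z(S)=t_{S/S^+}(G(\Mod_{S^+}))$. For the second assertion, let $M\in\Mod_S$ with $[M]_S\in Z(S)$ and write $[M]_S=\sum_\frM m_\frM[S/\frM]_S$ with $m_\frM\ge 0$; by the case analysis the tuple $(m_\frM)_{\frM\mid\frm}$ equals $(c_\frm,c_\frm)$, $2c_\frm$, or $c_\frm$ (cases (a), (b), (c)) with $c_\frm\ge 0$, and since $N_{S/S^+}\circ t_{S/S^+}$ is multiplication by $2$ on the free group $G(\Mod_{S^+})$ the map $t_{S/S^+}$ is injective, with $P'=\sum_\frm c_\frm[S^+/\frm]_{S^+}$ an effective preimage of $[M]_S$.

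The main obstacle I anticipate is the key point in the second paragraph: one has to exclude a maximal ideal $\frm$ of $S^+$ that splits in $S$ but whose two primes of $S$ are each fixed by $\overline{\,\cdot\,}$, for such an $\frm$ would make $Z(S)\cap G_\frm$ strictly larger than $\Z\cdot[S/\frm S]_S$ and the lemma would fail. Monogenicity of $A$ over $k$ (a consequence of local freeness) together with the explicit "conjugate root" description of the involution on $A$ is exactly what rules this out; the rest is bookkeeping with Grothendieck groups and the classification of quadratic algebras over a field.
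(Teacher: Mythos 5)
Your proof is correct, and it is organised differently from the paper's: you partition $G(\Mod_S)$ by the maximal ideals $\frm$ of $S^+$ and classify the two-dimensional $k$-algebra $S/\frm S$, whereas the paper partitions by the conjugation orbits of the maximal ideals of $S$ (the case $\frN\neq\overline\frN$, and the case $\frN=\overline\frN$ of residue degree $1$ or $2$) and then computes $t_{S/S^+}$ explicitly in each case. The two case analyses are dual to one another, so the core bookkeeping is equivalent. The genuine added value in your write-up is the ``key point'' you flag at the end: ruling out a maximal ideal $\frm$ of $S^+$ with two primes $\frM_1\neq\frM_2$ of $S$ above it, both of residue degree $1$ and both fixed by the involution. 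If that configuration occurred, $Z(S)\cap G_\frm$ would contain $a[S/\frM_1]_S+b[S/\frM_2]_S$ for any $a+b$ even, which is strictly larger than $\Z\cdot[S/\frm S]_S$, and the lemma would fail. The paper's proof handles the fixed, degree-$1$ primes by asserting $[S/\frn S]_S=2[S/\frN]_S$, which tacitly presumes that $\frN$ is the only prime above $\frn$; the justification is left implicit. Your monogenicity observation --- $S_\frm$ is free with basis $\{1,e\}$ over $S^+_\frm$ by Nakayama, so $S/\frm S=k[\bar e]$ and the involution sends $\bar e$ to $\overline{\Tr_{K/K^+}(e)}-\bar e$, the other root of the reduced quadratic --- is exactly what is needed: a split $S/\frm S\cong k\times k$ forces the two roots to be distinct, and the involution then necessarily swaps the two factors. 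So your argument is correct and closes a small gap in the published proof; otherwise the two proofs are essentially the same computation seen from opposite ends of the tower.
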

\begin{proof}
    Let $P'$ be an element of $G(\Mod_{S^+})$.
    Write 
    \[ P' = \sum_\frn n_{\frn} [S^+/\frn]_{S^+}, \]
    where $\frn$ runs over the maximal ideals of $S^+$.
    Then 
    \[ t_{S/S^+}(P') = \sum_\frn n_{\frn} [S^+/\frn \otimes_{S^+} S]_{S}. \]
    We have a natural $S$-linear isomorphism $S^+/\frn \otimes_{S^+} S \simeq S/\frn S$.
    So
    \[ N_{S/S^+}([S/\frn S]_{S}) = 2[S^+/\frn]_{S^+}\]
    because $S$ is locally free of rank $2$ over $S^+$.
    Hence $N_{S/S^+}([S/\frn S]_{S}) \in 2G(\Mod_{S^+})$.
    Since $S=\overline{S}$ and $\frn S=\overline{\frn S}$, by Lemma~\ref{lemma:symmetric}, we get that $[S^+/\frn \otimes_{S^+} S]_S = [S/\frn S]_S$ is symmetric.
    This concludes the proof that
    \[ t_{S/S^+}(G(\Mod_{S^+})) \subseteq Z(S).\]

    Now we prove the reverse inclusion.
    Let $P$ be a symmetric element of $G(\Mod_S)$.
    Write
    \[ P = \sum_\frN n_\frN [S/\frN]_S, \]
    where the sum is taken over all maximal ideals $\frN$ of $S$.
    Since $P$ is symmetric we must have $n_\frN = n_{\overline{\frN}}$ for every maximal ideal $\frN$ of $S$.
    Write
    \[ P = \sum_{\frN\neq \overline\frN} n_\frN \left( [S/\frN \oplus S/\overline{\frN}]_S\right) + \sum_{\frN = \overline\frN} n_\frN \left( [S/\frN]_S\right). \]
    Fix a maximal ideal $\frN$ of $S$ and set $\frn = \frN \cap S^+ = \overline{\frN} \cap S^+$.
    
    Assume first that $\frN \neq \overline{\frN}$.
    Then $\frn S \subseteq \frN \overline{\frN}$.
    Hence we have a surjective map 
    \[ S /\frn S \onto S/\frN\overline{\frN}\simeq S/\frN \times S/\overline\frN. \]
    Since $S/\frn S$ is a $2$-dimensional vector space over $S^+/\frn$, we get that $S/\frN\simeq S/\overline\frN\simeq S^+/\frn$.
    This means that $\frn S=\frN\overline{\frN}$.
    Therefore
    \[ [S/\frN \oplus S/\overline{\frN}]_S = [S/\frN\overline{\frN}]_S = [S/\frn S]_S = t_{S/S^+}([S^+/\frn]_{S^+}). \]
    
    Now consider the case $\frN = \overline{\frN}$.
    Observe that the residue field extension $S^+/\frn \to S/\frN$ is of degree $1$ or $2$.
    Let $\mathcal{M}_1$ (resp.~$\mathcal{M}_2$) be the set of maximal ideals $\frN$ of $S$ such that $\frN = \overline{\frN}$ and the degree is $1$ (resp.~$2$).
    If $\frN \in \mathcal{M}_2$ then $\frN = \frn S$, since $S/\frn S$ is has dimension $2$ over $S^+/\frn$.
    Hence
    \[ [S/\frN]_S = [S/\frn S]_S = t_{S/S^+}([S^+/\frn]_{S^+}).\]
    By the first part of the proof, we get that $P=\overline{P}\in Z(S)$ if and only if
    \[ \sum_{\frN \in \mathcal{M}_1} n_\frN \left( [S/\frN]_S\right) =
    P - \sum_{\frN\neq \overline\frN} n_\frN \left( [S/\frN \oplus S/\overline{\frN}]_S\right) - \sum_{\frN \in \mathcal{M}_2} n_\frN \left( [S/\frN]_S\right) \]
    is in $Z(S)$ as well.
    This happens precisely when $n_\frN N_{S/S^+}\left( [S/\frN]_S\right)$ is in $2G(\Mod_{S^+})$ for every $\frN \in \mathcal{M}_1$, that is, when each $n_\frN$ is even, since $S/\frN \simeq S^+/\frn$ is a simple $S^+$-module.
    Since, for $\frN \in \mathcal{M}_1$, we have $S$-linear isomorphisms
    \[ t_{S/S^+}([S^+/\frn]_{S^+}) \simeq S/\frn S \simeq (S/\frN)^2, \]
    we conclude the proof that $Z(S) \subseteq t_{S/S^+}(G(\Mod_{S^+}))$ by additivity of $t_{S/S^+}$.
\end{proof}

Recall that the order $R=\Z[\pi,q/\pi]$ is locally free of rank $2$ over $R^+=\Z[\pi +q/\pi]$.
\begin{lemma}\label{lem:P_is_tensor_lin_fact}
    Let $\ell$ be a rational prime.
    The following statements are equivalent.
    \begin{enumerate}[(i)]
        \item \label{lem:P_is_tensor_lin_fact:tensor} There exists a finite $R$-module $M$ of order $|M|=\ell^2$ such that the class of $M$ in $G(\Mod_{R})$ is of the form $t_{R/R^+}(P')$ for some effective $P'$ in $G(\Mod_{R^+})$.
        \item \label{lem:P_is_tensor_lin_fact:Z} There exists a finite $R$-module $M$ of order $|M|=\ell^2$ such that $[M]_R$ is in $Z(R)$. 
        \item \label{lem:P_is_tensor_lin_fact:lin} $f^+(t) \bmod \ell$ has a linear factor. 
    \end{enumerate}
    If any of the equivalent statements hold then $[M]_R = t_{R/R^+}([R^+/\frl]_{R^+})$ where $\frl$ is the unique maximal ideal of $R^+$ above $\ell$ corresponding to any linear factor of $f^+(t) \bmod \ell$.
\end{lemma}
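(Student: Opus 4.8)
The equivalence \ref{lem:P_is_tensor_lin_fact:tensor}$\Leftrightarrow$\ref{lem:P_is_tensor_lin_fact:Z} is essentially a restatement of Lemma~\ref{lemma:tensor_ZR} for the order $S=R$: indeed $R=\overline R$ since $\overline\pi=q/\pi$, and $R=R^+\oplus R^+\pi$ is free of rank $2$ over $R^+$ (hence locally free, with $R\cap K^+=R^+$) because $\pi$ satisfies $\pi^2-(\pi+q/\pi)\pi+q=0$ with $\pi\notin K^+$ (as $K/K^+$ has degree $2$). Thus $Z(R)=t_{R/R^+}(G(\Mod_{R^+}))$, which yields \ref{lem:P_is_tensor_lin_fact:tensor}$\Rightarrow$\ref{lem:P_is_tensor_lin_fact:Z}, while the ``moreover'' clause of Lemma~\ref{lemma:tensor_ZR} yields \ref{lem:P_is_tensor_lin_fact:Z}$\Rightarrow$\ref{lem:P_is_tensor_lin_fact:tensor} together with the effectivity of the witnessing $P'$. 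So the real content is the equivalence with \ref{lem:P_is_tensor_lin_fact:lin} and the closing formula for $[M]_R$. Two elementary facts will be used: \emph{(a)} for every ideal $I$ of $R^+$ we have $IR=I\oplus I\pi$ inside $R=R^+\oplus R^+\pi$, hence $R/IR\cong (R^+/I)^2$ and $|R/IR|=|R^+/I|^2$; \emph{(b)} since $R^+=\Z[\pi+q/\pi]\cong\Z[t]/(f^+(t))$, the Kummer--Dedekind theorem \cite[Thm.~8.2]{Stevenhagen08} identifies the maximal ideals of $R^+$ above $\ell$ with residue field $\F_\ell$ with the ideals $\frl=(\ell,\pi+q/\pi-c)$, one for each linear factor $t-c$ of $f^+(t)\bmod\ell$ (invertibility of $\frl$ in $R^+$ playing no role here).

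For \ref{lem:P_is_tensor_lin_fact:lin}$\Rightarrow$\ref{lem:P_is_tensor_lin_fact:tensor}: pick a linear factor $t-c$ of $f^+(t)\bmod\ell$, let $\frl$ be the associated maximal ideal of $R^+$ (so $R^+/\frl\cong\F_\ell$ by \emph{(b)}), and set $M=R/\frl R\cong R\otimes_{R^+}R^+/\frl$. By \emph{(a)}, $|M|=|R^+/\frl|^2=\ell^2$, and by the definition of $t_{R/R^+}$ we have $[M]_R=t_{R/R^+}([R^+/\frl]_{R^+})$ with $[R^+/\frl]_{R^+}$ simple, hence effective. This gives \ref{lem:P_is_tensor_lin_fact:tensor} and the closing formula at once.

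For \ref{lem:P_is_tensor_lin_fact:Z}$\Rightarrow$\ref{lem:P_is_tensor_lin_fact:lin}: assume $|M|=\ell^2$ and $[M]_R\in Z(R)$. By Lemma~\ref{lemma:tensor_ZR}, $[M]_R=t_{R/R^+}(P')$ with $P'=\sum_\frn n_\frn[R^+/\frn]_{R^+}$ effective ($n_\frn\geq 0$, the sum over maximal ideals $\frn$ of $R^+$). Since $t_{R/R^+}([R^+/\frn]_{R^+})=[R/\frn R]_R$, applying the order homomorphism $G(\Mod_R)\to\Q^\times$ and using \emph{(a)} gives
\[ \ell^2=|M|=\prod_\frn |R/\frn R|^{n_\frn}=\Bigl(\prod_\frn |R^+/\frn|^{n_\frn}\Bigr)^{2}, \]
so $\ell=\prod_\frn|R^+/\frn|^{n_\frn}$. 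As $\ell$ is prime and every $|R^+/\frn|$ is a prime power at least $2$, exactly one $\frn$ — call it $\frl$ — has $n_\frl\neq0$, and then $n_\frl=1$ and $|R^+/\frl|=\ell$. Hence $\frl$ lies above $\ell$ with residue field $\F_\ell$, so by \emph{(b)} $f^+(t)\bmod\ell$ has the corresponding linear factor, and $[M]_R=[R/\frl R]_R=t_{R/R^+}([R^+/\frl]_{R^+})$; when $f^+(t)\bmod\ell$ has only one linear factor (as in all cases relevant here, e.g.\ $\ell=2$ for $f\in\PirrA\sqcup\PirrB$), this $\frl$ is the unique maximal ideal of $R^+$ above $\ell$ with residue field $\F_\ell$.

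The only step requiring a little care is the bookkeeping in \emph{(b)}: $R^+$ need not be maximal, so one must check that a linear factor of $f^+(t)\bmod\ell$ produces a genuine maximal ideal $\frl\subset R^+$ with residue field $\F_\ell$, and conversely. This is immediate from $R^+/\ell R^+\cong\F_\ell[t]/(f^+(t)\bmod\ell)$, whose maximal ideals with residue field $\F_\ell$ correspond to the roots of $f^+(t)$ in $\F_\ell$; invertibility of $\frl$ is never needed. Everything else is a direct computation.
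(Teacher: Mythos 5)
Your proposal is correct and follows essentially the same route as the paper: reduce (i)$\Leftrightarrow$(ii) to Lemma~\ref{lemma:tensor_ZR}, then exploit that $R$ is (locally) free of rank~$2$ over $R^+$ to compute $|M|=\prod_i|R^+/\frn_i|^{2n_i}$, and invoke Kummer--Dedekind to translate between maximal ideals of $R^+$ with residue field $\F_\ell$ and linear factors of $f^+(t)\bmod\ell$. The only cosmetic difference is that you make explicit that $R=R^+\oplus R^+\pi$ is actually free of rank~$2$ (the paper simply cites local freeness), which is a harmless strengthening of the same bookkeeping step.
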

\begin{proof}
    The equivalence of \ref{lem:P_is_tensor_lin_fact:tensor} and \ref{lem:P_is_tensor_lin_fact:Z} is an immediate consequence of Lemma~\ref{lemma:tensor_ZR}.

    Let $P' \in G(\Mod_{R^+})$ be effective and non-zero.
    Then there are positive integers $n_1,\ldots,n_r$ and maximal $R^+$-ideals $\frn_1,\ldots,\frn_r$ such that 
    \[ P' = \sum_{i=1}^r n_i[R^+/\frn_i]. \]
    Hence
    \[ t_{R/R^+}(P') = \sum_{i=1}^r n_i[R^+/\frn_i \otimes_{R^+} R]. \]
    Since $R$ is locally free of rank $2$ over $R^+$, we get that
    \[ |R^+/\frn_i \otimes_{R^+} R| = |R^+/\frn_i|^2. \]
    If $M$ is a finite $R$-module whose image in $G(\Mod_{R})$ is $t_{R/R^+}(P')$ for $P'$ as above then
    \[ |M| = \prod_{i=1}^r |R^+/\frn_i|^{2n_i}. \]

    We now show that \ref{lem:P_is_tensor_lin_fact:tensor} implies \ref{lem:P_is_tensor_lin_fact:lin}.
    So, assume also that $M$ as above satisfies $|M|=\ell^2$ for a rational prime $\ell$.
    Then $r=1$, $n_1=1$ and $R^+/\frn_1 \simeq \F_\ell$.  
    This implies that $f^+(t) \bmod \ell$ has a linear factor (cf.~\cite[Thm.~8.2]{Stevenhagen08}).

    Finally, assume that \ref{lem:P_is_tensor_lin_fact:lin} holds.
    Let $\frl$ be the corresponding maximal ideal of $R^+$.
    Then we see that $R^+/\frl\otimes_{R^+}R$ is an $R$-module of order $\ell^2$, as in \ref{lem:P_is_tensor_lin_fact:tensor}.
\end{proof}

The following result completely describes $\cB(\cO)$ for the maximal order $\cO$ of $K$.
\begin{proposition}\label{prop:BO}
    If $K/K^+$ is ramified at a finite prime then $\cB(\cO)=0$.
    Otherwise, the Artin map induces an isomorphism $\cB(\cO)\simeq \Gal(K/K^+)$.
\end{proposition}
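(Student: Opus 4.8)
The plan is to reduce the computation of $\cB(\cO)$ to a statement about the narrow ideal class group $\mathrm{Cl}^+(K^+)$ of the totally real field $K^+$, and then to read off the answer from class field theory. First I would make $Z(\cO)$ explicit: by Lemma~\ref{lemma:tensor_ZR} applied with $S=\cO$ we have $Z(\cO)=t_{\cO/\cO^+}(G(\Mod_{\cO^+}))$, and $t:=t_{\cO/\cO^+}$ is injective since it sends the basis elements $[\cO^+/\frn]_{\cO^+}$ to classes supported on pairwise disjoint sets of maximal ideals of $\cO$. Hence $t$ identifies $Z(\cO)$ with the free abelian group $G(\Mod_{\cO^+})$ on the maximal ideals of $\cO^+$, that is, with the group $D_{K^+}$ of finite divisors of $K^+$.

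Next I would transport the denominator $B(\cO)\cdot\Pr_{\cO}(K^\dagger)$ through this identification. For a maximal ideal $\frn$ of $\cO^+$ one has $\frn\cO=\frN\overline{\frN}$ with $\frN\neq\overline{\frN}$, or $\frn\cO=\frN$, or $\frn\cO=\frN^2$, according as $\frn$ splits, stays inert, or ramifies in $K$; correspondingly $t([\cO^+/\frn]_{\cO^+})$ equals $[\cO/\frN]_{\cO}+[\cO/\overline{\frN}]_{\cO}$, or $[\cO/\frN]_{\cO}$, or $2[\cO/\frN]_{\cO}$. Combining this with the description of $B(\cO)$ via the generators $P+\overline P$ and with the identity $\Pr_{\cO}(\gamma)=t(\Pr_{\cO^+}(\gamma))$ for $\gamma\in(K^+)^\times$ (which also pins down $\Pr_{\cO}(K^\dagger)$), a short bookkeeping shows that $t^{-1}(B(\cO)\cdot\Pr_{\cO}(K^\dagger))$ is the subgroup of $D_{K^+}$ generated by the narrowly principal divisors, the maximal ideals of $\cO^+$ splitting or ramifying in $K$, and $2D_{K^+}$. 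Writing $\mathrm{Cl}^+(K^+)=D_{K^+}/P^+_{K^+}$ for the narrow class group, this yields a natural isomorphism
\[ \cB(\cO)\;\simeq\;\mathrm{Cl}^+(K^+) \, / \, \bigl(2\,\mathrm{Cl}^+(K^+)+C_{\mathrm{spl}}+C_{\mathrm{ram}}\bigr), \]
where $C_{\mathrm{spl}}$ (resp.\ $C_{\mathrm{ram}}$) denotes the subgroup of $\mathrm{Cl}^+(K^+)$ generated by the narrow classes of the finite primes of $K^+$ that split (resp.\ ramify) in $K$.

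Now I would invoke class field theory. If $K/K^+$ is unramified at every finite prime, then $K$ lies in the narrow Hilbert class field of $K^+$, so the Artin map yields a surjection $\mathrm{Cl}^+(K^+)\onto\Gal(K/K^+)$ with kernel $H$ of index $2$; in particular $2\,\mathrm{Cl}^+(K^+)\subseteq H$, while $C_{\mathrm{ram}}=0$ as there are no ramified finite primes. A finite prime splits in $K$ exactly when its narrow class lies in $H$, and by the Chebotarev density theorem every class of $H$ occurs, so $C_{\mathrm{spl}}=H$; hence $\cB(\cO)\simeq\mathrm{Cl}^+(K^+)/H\simeq\Gal(K/K^+)$, the isomorphism being induced by the Artin map. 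If instead $K/K^+$ is ramified at some finite prime, then the conductor $\mathfrak{f}$ of $K/K^+$ is not supported only at the archimedean places, so the Artin map $\mathrm{Cl}_{\mathfrak{f}}(K^+)\onto\Gal(K/K^+)$ does not factor through $\mathrm{Cl}^+(K^+)$; writing $H_{\mathfrak{f}}$ for its index-$2$ kernel, this non-factorisation forces $\ker(\mathrm{Cl}_{\mathfrak{f}}(K^+)\to\mathrm{Cl}^+(K^+))\not\subseteq H_{\mathfrak{f}}$, and therefore $H_{\mathfrak{f}}$ surjects onto $\mathrm{Cl}^+(K^+)$. Since, by Chebotarev, the $\mathfrak{f}$-ray classes of the primes split in $K$ fill up $H_{\mathfrak{f}}$, their narrow classes generate $\mathrm{Cl}^+(K^+)$, so $C_{\mathrm{spl}}=\mathrm{Cl}^+(K^+)$ and $\cB(\cO)=0$.

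The step I expect to be the crux is the ramified case: making precise that a single finite ramified prime forces the primes split in $K$ to generate the \emph{whole} narrow class group $\mathrm{Cl}^+(K^+)$. This rests on the comparison between the ray class group $\mathrm{Cl}_{\mathfrak{f}}(K^+)$ attached to the conductor $\mathfrak{f}$ of $K/K^+$ and $\mathrm{Cl}^+(K^+)$, via the observation — equivalent to the existence of a finite ramified prime — that the Artin map does not descend to $\mathrm{Cl}^+(K^+)$; the remaining ingredients, namely the Grothendieck-group bookkeeping of the second paragraph and the appeals to the Chebotarev density theorem, are routine.
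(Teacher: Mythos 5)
Your proposal reproves the statement from first principles, whereas the paper simply cites \cite[Prop.~6.2]{Howe96}, so the comparison is really with Howe's underlying argument. Your route — using Lemma~\ref{lemma:tensor_ZR} to identify $Z(\cO)$ with the divisor group of $K^+$ via the injective map $t$, transporting the denominator $B(\cO)\cdot\Pr_\cO(K^\dagger)$ across, and recognising the quotient as $\mathrm{Cl}^+(K^+)/(2\,\mathrm{Cl}^++C_{\mathrm{spl}}+C_{\mathrm{ram}})$ — is essentially the mechanism behind \cite[Prop.~10.1]{Howe95} and \cite[Prop.~6.2]{Howe96}, and your two class-field-theory computations (in the unramified case $C_{\mathrm{spl}}=H$ and $2\,\mathrm{Cl}^+\subseteq H$; in the ramified case the Artin map does not descend to $\mathrm{Cl}^+$, so $H_{\mathfrak f}$ surjects onto $\mathrm{Cl}^+$ and Chebotarev forces $C_{\mathrm{spl}}=\mathrm{Cl}^+$) are both correct. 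The paper's citation is shorter; your version has the advantage of making visible exactly where the dichotomy ``ramified at a finite prime / unramified at all finite primes'' enters, namely through whether the conductor of $K/K^+$ has a nontrivial finite part.

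One thing you should address explicitly. Your bookkeeping step asserts that $t^{-1}\bigl(\Pr_\cO(K^\dagger)\bigr)$ is the group of narrowly principal divisors of $K^+$. This is true if $K^\dagger$ is the group of totally positive elements of $K^+$ — which is what Howe uses and what makes the statement true. But the paper, as quoted, defines $K^\dagger$ to be the group of \emph{squares} of totally positive elements. Under that literal reading $\Pr_\cO(K^\dagger)=2\Pr_\cO\bigl(K^+_{\gg 0}\bigr)$, and since $\Pr_\cO\bigl(K^+_{\gg 0}\bigr)\subseteq Z(\cO)$ and $2Z(\cO)\subseteq B(\cO)$ (because $t(2[\cO^+/\frn])$ lies in $B(\cO)$ for every maximal ideal $\frn$ of $\cO^+$), the term $\Pr_\cO(K^\dagger)$ would be entirely absorbed into $B(\cO)$ and $\cB(\cO)$ would be the infinite free $\F_2$-vector space on the inert primes of $K^+$, contradicting the proposition. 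So you have tacitly (and correctly) corrected the definition of $K^\dagger$; you should flag this rather than leave the reader to reconcile your ``narrowly principal divisors'' with the paper's ``squares''.
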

\begin{proof}
    This is part of \cite[Prop.~6.2]{Howe96}.
\end{proof}
Define 
\[ H(R) = \frac{Z(R)}{B(R)}. \]
The group $H(R)$ is a vector space over $\F_2$ whose basis is given by the classes of the simple $R$-modules $R/\frn$ where $\frn$ is a maximal $R$-ideal such that $\frn = \overline \frn$ and the index $[R/\frn : R^+/(R^+\cap \frn)]$ of residue fields is even.
Such maximal ideals are called the \emph{generating primes} of $H(R)$.

The inclusion $i\colon R\to \cO$ induces a group homomorphism $i^*\colon\cB(\cO)\to\cB(R)$ by considering every finite $\cO$-module as an $R$-module.

Let $\psi\colon H(R) \to \cB(R)$ and $\chi\colon H(\cO) \to \cB(\cO)$ denote the canonical reductions.
Note that $i$ induces a norm map $G(\Mod_{\cO}) \to G(\Mod_{R})$ which in turn induces a norm $N\colon H(\cO) \to H(R)$.
\begin{proposition}\label{prop:ses_BR}
    We have an exact sequence
    \[ H(\cO) \overset{(N,-\chi)}{\longrightarrow} H(R) \oplus \cB(\cO) \overset{\psi \oplus i^*}{\longrightarrow} \cB(R) \to 0. \]
\end{proposition}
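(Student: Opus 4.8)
The plan is to establish the exact sequence by chasing the definitions of the groups $H(R)$, $H(\cO)$, $\cB(R)$, $\cB(\cO)$ and the maps between them, exploiting the structure of $Z(R)$ and $Z(\cO)$ provided by Lemma~\ref{lemma:tensor_ZR}. Since $R = \overline R$ and $\cO = \overline \cO$ are both locally free of rank $2$ over their respective totally real suborders, that lemma gives $Z(R) = t_{R/R^+}(G(\Mod_{R^+}))$ and $Z(\cO) = t_{\cO/\cO^+}(G(\Mod_{\cO^+}))$. I would first record that the map $(N,-\chi)$ is well-defined: the norm $G(\Mod_\cO)\to G(\Mod_R)$ sends $Z(\cO)$ into $Z(R)$ and $B(\cO)$ into $B(R)$, because $N_{\cO/\cO^+}$ is compatible with $N_{R/R^+}$ under the inclusions, and a symmetric element of $G(\Mod_\cO)$ has symmetric image in $G(\Mod_R)$ (the conjugation on $\cO$ restricts to that on $R$). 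Likewise $i^*$ is induced by the same norm map passed to the quotients defining $\cB$, and $\psi$, $\chi$ are the tautological surjections $Z/B \twoheadrightarrow Z/(B\cdot\Pr(K^\dagger))$.

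Next I would verify exactness at each of the three spots. \emph{Surjectivity at $\cB(R)$:} by definition $\cB(R) = Z(R)/(B(R)\cdot\Pr_R(K^\dagger))$, and $\psi$ already surjects onto $Z(R)/(B(R)\cdot\Pr_R(K^\dagger))$ since $H(R) = Z(R)/B(R)$ maps onto it; hence $\psi\oplus i^*$ is certainly surjective. \emph{Exactness at $H(R)\oplus\cB(\cO)$:} the composite $(\psi\oplus i^*)\circ(N,-\chi)$ sends $x\in H(\cO)$ to $\psi(Nx) - i^*(\chi x)$, which is zero precisely because the norm map $G(\Mod_\cO)\to G(\Mod_R)$ commutes with the two reductions to the $\cB$'s — this is essentially the compatibility of $i^*$ with $N$ and of the principal-element homomorphisms $\Pr_\cO$, $\Pr_R$ with the inclusion $R\hookrightarrow\cO$. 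For the reverse inclusion, suppose $(P,b)\in H(R)\oplus\cB(\cO)$ satisfies $\psi(P) = i^*(b)$. Lift $b$ to $\tilde b\in H(\cO)$ via the surjection $\chi$ (surjective by Proposition~\ref{prop:BO}, or directly since $\chi$ is a quotient map); then $\psi(P - N\tilde b) = 0$, so $P - N\tilde b$ lies in the kernel of $\psi$, which is $(B(R)\cdot\Pr_R(K^\dagger))/B(R)$, i.e.\ it is the class of $\Pr_R(u)$ for some $u\in K^\dagger$. Here I would use that $\Pr_R(u)$ for $u$ a totally positive square in $K^+$ is itself a norm from $\cO$ — indeed $u\in (K^+)^\times$ so $\Pr_R(u) = N_{?}$ of a principal element over $\cO^+$, or more simply $\Pr_R(u)\in Z(R)$ equals $t_{R/R^+}(\Pr_{R^+}(\sqrt u))$ when $u$ is a square, which lies in the image of $N$ — allowing us to adjust $\tilde b$ by an element of $H(\cO)$ so that $N\tilde b = P$ exactly in $H(R)$, and then $\chi(\tilde b) = b$ by construction, giving the desired preimage $(P,b) = (N,-\chi)(-\tilde b)$ up to sign bookkeeping, which I would track carefully.

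The main obstacle I expect is exactness at $H(R)\oplus\cB(\cO)$, specifically the step of showing that an element killed by $\psi\oplus i^*$ actually comes from $H(\cO)$: this requires knowing that the ``defect'' $\Pr_R(K^\dagger)$ modulo $B(R)$ is captured by norms from the maximal order, which in turn hinges on the fact that totally positive \emph{squares} in $K^+$ factor through $G(\Mod_{R^+})$ in a way compatible with $t_{\cO/\cO^+}$ and the already-understood structure of $\cB(\cO)$ from Proposition~\ref{prop:BO}. A clean way to organise this is to first treat the case $K/K^+$ ramified at a finite prime, where $\cB(\cO) = 0$ by Proposition~\ref{prop:BO} and the sequence degenerates to surjectivity of $\psi\colon H(R)\to\cB(R)$ together with $N\colon H(\cO)\to\ker\psi$, and then handle the unramified case using the Artin-map identification $\cB(\cO)\simeq\Gal(K/K^+)$. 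In both cases the verification reduces to the explicit description of generating primes of $H(R)$ and $H(\cO)$ recalled above, matched against the factorisation data for the relevant primes; I would lean on Howe's framework in \cite{Howe96} for the compatibility statements rather than re-deriving them.
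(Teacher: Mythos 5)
The paper's proof is a single-line citation to \cite[Prop.~6.4]{Howe96}; you instead attempt a self-contained diagram chase, so the two proofs take genuinely different routes. Your overall strategy — check that the square $\psi\circ N = i^*\circ\chi$ commutes, note $\psi$ is surjective, lift $b$ along $\chi$, and show the defect $P-N\tilde b\in\ker\psi$ can be absorbed by a further adjustment — is the right one, and it is the kind of argument Howe himself would run.

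However, the place you explicitly flag as ``the main obstacle'' is exactly where your argument goes wrong. First, the identity you propose, $\Pr_R(u)=t_{R/R^+}(\Pr_{R^+}(\sqrt u))$, is false: one computes $t_{R/R^+}(\Pr_{R^+}(v))=\Pr_R(v)$, not $\Pr_R(v^2)$, so the right-hand side equals $\Pr_R(\sqrt u)$, which differs from $\Pr_R(u)$ by a factor of~$2$. More seriously, $t_{R/R^+}$ (extension of scalars from $R^+$ to $R$) is not the map $N$ (restriction of scalars from $\cO$ to $R$); Lemma~\ref{lemma:tensor_ZR} tells you $t_{R/R^+}$ hits all of $Z(R)$, so ``lies in the image of $t_{R/R^+}$'' carries no information about being in the image of $N$. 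Second, even granting that $P-N\tilde b=N(c)$ for some $c\in H(\cO)$, adjusting $\tilde b$ to $\tilde b'=\tilde b+c$ gives $\chi(\tilde b')=b+\chi(c)$, and you never check $\chi(c)=0$; without that, the adjusted element no longer maps to $b$. Both gaps close simultaneously once you pick the right $c$: take $c$ to be the class of $\Pr_\cO(u)$ in $H(\cO)$ (for the $u\in K^\dagger$ realising the defect). Then $\chi(c)=0$ by definition of $\cB(\cO)$, and $N(c)$ is the class of $\Pr_R(u)$ in $H(R)$ because $[\cO/\alpha\cO]_R=[R/\alpha R]_R$ in $G(\Mod_R)$ (compare the two short exact sequences for $\cO/\alpha R$, using $\alpha\cO/\alpha R\simeq \cO/R$). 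With that replacement your chase is complete; as written, the key step is not.
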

\begin{proof}
    This is a special case of \cite[Prop.~6.4]{Howe96}.
\end{proof}

We have the following:
\begin{theorem}[{\cite[Thm.~1.3]{Howe96}}]\label{thm:attainable}
    There is an element $I_\cC$ of $\cB(R)$ such that the elements of $G(\Mod_R)$ that are attainable in $\cC$ are precisely the effective elements of $Z(R)$ that map to $I_\cC$ in $\cB(R)$.
    In particular, the isogeny class $\cC$ contains a principal polarisation if and only if $I_\cC=0$. 
\end{theorem}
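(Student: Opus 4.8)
The plan is to recall the structure of Howe's proof, since the statement is \cite[Thm.~1.3]{Howe96}. The starting point is the dictionary between polarisations and lattice data. As $\cC$ is cut out by the irreducible Weil polynomial $f(t)$, every $A$ in $\cC$ has $R=\Z[\pi,q/\pi]\subseteq\End(A)$, and by Deligne's equivalence in the ordinary case---and its extension by Howe in general---a variety $A$ in $\cC$ equipped with a polarisation $\lambda$ is encoded by a fractional $R$-ideal $T$ together with a Rosati-positive element $\beta\in K^{+}$, and $\ker\lambda$ is then an explicit finite $R$-module $M(T,\beta)$ (of the shape $\overline{T}^{\vee}/\beta T$ for a suitable conjugate-dual $\overline{T}^{\vee}$). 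The first step is therefore to translate ``$G$ is attainable in $\cC$'' into ``$\epsilon([G]_{\cC})=[M(T,\beta)]_{R}$ for some admissible pair $(T,\beta)$'', handling the purely local summand $\alpha_{p}$ in the non-ordinary case exactly as in the proof of Theorem~\ref{thm:deg_ord}.

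The second step is the easy inclusion: an attainable class is effective and lies in $Z(R)$. Effectivity is clear, as such a class is represented by a genuine finite $R$-module. Symmetry holds because the kernel of a polarisation carries the perfect alternating Weil form $e^{\lambda}$, so $\widehat{\ker\lambda}\cong\ker\lambda$; hence $[\ker\lambda]_{\cC}$ is fixed by the Cartier-duality involution on $G(\KerC)$, and since $\epsilon$ intertwines that involution with the involution on $G(\Mod_{R})$ (which one checks on the generators of $\mathcal{K}_{rr}$, $\mathcal{K}_{rl}$, $\mathcal{K}_{lr}$ and on $\alpha_{p}$, using that conjugation on $K$ swaps $\pi$ and $q/\pi$), the class $\epsilon([\ker\lambda]_{\cC})$ is symmetric. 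For the norm condition, note that $e^{\lambda}$ is $R^{+}$-bilinear because $R^{+}$ is pointwise fixed by the Rosati involution; thus $\ker\lambda$ is a finite $R^{+}$-module with a perfect alternating pairing, and localising at each maximal ideal of $R^{+}$ and inducting on length (peeling off rank-$2$ hyperbolic summands) shows this length is even, i.e.\ $N_{R/R^{+}}\bigl(\epsilon([\ker\lambda]_{\cC})\bigr)\in 2\,G(\Mod_{R^{+}})$. Hence $\epsilon([\ker\lambda]_{\cC})\in Z(R)$.

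The main step is the converse together with the claim that all attainable classes fill exactly one coset. Fix a polarised variety in $\cC$---one exists since abelian varieties are projective---with polarisation $\lambda_{0}$, and set $I_{\cC}$ to be the image of $\epsilon([\ker\lambda_{0}]_{\cC})$ in $\cB(R)=Z(R)/\bigl(B(R)\cdot\Pr_{R}(K^{\dagger})\bigr)$. First, any two admissible pairs yield classes differing by an element of $B(R)\cdot\Pr_{R}(K^{\dagger})$: replacing $T$ by $\alpha T$ (an isomorphic variety) changes $\beta$ into $\beta\,N_{K/K^{+}}(\alpha)^{-1}$ and alters the kernel class by a symmetrised principal element $P+\overline P$; moving to an isogenous variety in $\cC$ contributes a further such term; and rescaling $\beta$ by the square of a totally positive element of $K^{+}$ contributes precisely $\Pr_{R}(K^{\dagger})$. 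So every attainable class maps to $I_{\cC}$. Conversely, given an effective $P\in Z(R)$ mapping to $I_{\cC}$, use Theorem~\ref{thm:howe_isom} to write $P=\epsilon([G]_{\cC})$ with $G=\ker(\varphi\colon A\to B)$; it then remains to adjust $\varphi$ by an automorphism so that $B\cong A^{\vee}$ and $\varphi$ becomes positive and symmetric. The obstruction to doing so, modulo the freedom just described, lies in $\cB(R)$ and vanishes exactly when $P$ and $I_{\cC}$ coincide. I expect this to be the crux of the proof: it is a local--global argument in the style of class field theory for the CM extension $K/K^{+}$---the same input as Proposition~\ref{prop:BO}, which expresses $\cB(\cO)$ through $\Gal(K/K^{+})$---showing that the local conditions forcing symmetry and positivity of $\varphi$ can be met simultaneously precisely when the class in $\cB(R)$ is the correct one.

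Finally, the last assertion is immediate: a principal polarisation is a polarisation with trivial kernel, represented in $G(\Mod_{R})$ by $0$, which is effective, lies in $Z(R)$, and maps to $0$ in $\cB(R)$; hence $0$ is attainable---equivalently, $\cC$ contains a principally polarised variety---if and only if $I_{\cC}=0$.
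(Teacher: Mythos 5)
The paper does not offer a proof of this result: it is quoted verbatim as \cite[Thm.~1.3]{Howe96}, so there is no in-paper argument to compare against. Your sketch follows the broad architecture of Howe's proof---encoding polarised varieties by lattice data $(T,\beta)$, computing the kernel class, and showing that changes of representative perturb the class by $B(R)\cdot\Pr_R(K^{\dagger})$---and the "easy" inclusion (attainable $\Rightarrow$ effective and in $Z(R)$) together with the final equivalence for $I_\cC=0$ are argued correctly in outline, modulo verifications you defer: that $\epsilon$ intertwines Cartier duality on $G(\KerC)$ with the involution $\overline{(\cdot)}$ on $G(\Mod_R)$ (not immediate on $\mathcal{K}_{lr}$ and on $\alpha_p$), and that the $\mu$-valued Weil pairing on $\ker\lambda$, which satisfies $e^{\lambda}(\alpha x, y)=e^{\lambda}(x,\bar\alpha y)$ and is therefore sesquilinear rather than $R^+$-bilinear, really does descend to an honest alternating perfect form on the finite $R^+$-module underlying $\epsilon([\ker\lambda]_\cC)$.

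The genuine gap is the converse. You reduce ``every effective element of $Z(R)$ mapping to $I_\cC$ is attainable'' to adjusting an isogeny $\varphi\colon A\to B$ with $\epsilon([\ker\varphi]_\cC)=P$ into a polarisation, and then write that you ``expect this to be the crux'' and gesture at a local--global argument. That step is where all the work lies: one must produce a totally positive element of $K^{+}$ with prescribed local behaviour at every maximal ideal appearing in $P$, and this needs the class-field-theoretic input (realising prescribed local data and Artin symbols by a global totally positive element) that you name but do not supply. Without it the argument is a plan for a proof, not a proof.
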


\begin{proposition}\label{prop:IC_in_BO}
    The obstruction element $I_\cC$ lies in $i^*(\cB(\cO))$.
\end{proposition}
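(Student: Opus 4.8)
The plan is to realise the obstruction $I_\cC$ as the image under $i^*$ of a class carried by the maximal order $\cO$, by examining the kernel of a polarisation of an abelian variety in $\cC$ with maximal endomorphism ring.

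First I would recall that, since $f(t)$ is irreducible, for every $A \in \cC$ one has $\End^0_{\Fq}(A) = K$ and $\End_{\Fq}(A)$ is an order of $K$ containing $R$. The isogeny class $\cC$ contains an abelian variety $A_0$ with $\End_{\Fq}(A_0)=\cO$: this is classical (via Deligne modules) when $\cC$ is ordinary, and in general follows from Waterhouse's description of the orders arising as endomorphism rings, $\cO$ being maximal at every prime. Every abelian variety over a finite field admits a polarisation; fix one, $\lambda\colon A_0 \to \widehat{A_0}$, and set $G=\ker\lambda \in \KerC$. Because the Rosati involution attached to any polarisation preserves the endomorphism ring, $\cO=\End(A_0)$ acts on $A_0$ preserving $G$, so $G$ is naturally an $\cO$-module scheme (and so is $\widehat G$).

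Next, tracing $G$ through the definition of $\epsilon$ — the geometric points of the reduced parts, the $\widehat{\cdot}$ of the geometric points of the local–reduced part, and the étale–local contribution $\alpha_p$ — each of the $R$-modules involved is in fact an $\cO$-module, so $M:=\epsilon([G]_\cC)$ is the restriction of scalars to $R$ of a finite $\cO$-module $M_\cO$; equivalently, the norm map $G(\Mod_\cO)\to G(\Mod_R)$ sends $[M_\cO]_\cO$ to $[M]_R$. I would then check that $[M_\cO]_\cO \in Z(\cO)$: it is symmetric because a polarisation is a symmetric isogeny, so $G$ is canonically Cartier self-dual compatibly with the $\cO$-action and the complex conjugation on $\cO$, giving $\overline{[M_\cO]_\cO}=[M_\cO]_\cO$; and $N_{\cO/\cO^+}([M_\cO]_\cO)$ lies in $2\,G(\Mod_{\cO^+})$ by the $\cO^+$-analogue of Howe's observation that the kernel of a polarisation is metabolic over the totally real subring, proved by the identical local computation (this is where the square $\deg\lambda=\chi(\mathcal{L})^2$ and the alternating polarisation pairing enter). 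Hence $[M_\cO]_\cO$ determines a class $J \in \cB(\cO)$.

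Finally, by the definition of $i^*\colon\cB(\cO)\to\cB(R)$ as the map induced by restriction of scalars, $i^*(J)$ is the class of $[M]_R$ in $\cB(R)$. But $G=\ker\lambda$ is attainable in $\cC$, so $M=\epsilon([G]_\cC)$ is an attainable module, and by Theorem~\ref{thm:attainable} its class in $\cB(R)$ is exactly $I_\cC$. Therefore $I_\cC=i^*(J)\in i^*(\cB(\cO))$. I expect the main obstacle to be the existence of $A_0$ with maximal endomorphism ring in the non-ordinary (necessarily supersingular) isogeny classes in $\PirrA\sqcup\PirrB$; should one wish to sidestep it, the same conclusion follows formally from the compatibility of Howe's obstruction elements for the orders $R\subseteq\cO$ under the maps induced by this inclusion, itself a consequence of the naturality of the isomorphism $\epsilon$ of Theorem~\ref{thm:howe_isom} together with Theorem~\ref{thm:attainable}. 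The bookkeeping through the four subcategories $\mathcal{K}_{rr},\mathcal{K}_{rl},\mathcal{K}_{lr},\mathcal{K}_{ll}$ in the third paragraph is routine but is the place where care is needed.
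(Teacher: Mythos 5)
Your approach — realising $I_\cC$ via the kernel of a polarisation on an abelian variety $A_0$ in $\cC$ with $\End(A_0)=\cO$, showing the corresponding $R$-module $M$ is the restriction of a finite $\cO$-module $M_\cO$ whose class lands in $Z(\cO)$, and concluding via Theorem~\ref{thm:attainable} — is the right idea and is, in spirit, what lies behind \cite[Prop.~7.1]{Howe96}; the paper itself simply cites that result. Two points need more care, however. First, the existence of $A_0$ with $\End(A_0)=\cO$, which you correctly single out as the crux: for ordinary $\cC$ this is standard via Deligne modules, but in the non-ordinary case it is a genuine input; the appeal to ``Waterhouse's description of the orders arising as endomorphism rings'' is only solidly established for $g=1$ and has to be replaced by the precise statement in the generality needed. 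Second, verifying $[M_\cO]_\cO\in Z(\cO)$ is not automatic from knowing $[M]_R\in Z(R)$: one must rerun Howe's argument with $(\cO,\cO^+)$ in place of $(R,R^+)$, using that the symmetry of $\lambda$ induces an $\cO$-conjugate-linear self-duality of $M_\cO$ and that the commutator pairing on $\ker\lambda$ is $\cO^+$-balanced, so its metabolic structure is $\cO^+$-stable. You indicate this, and it does go through, but it is not immediate.

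The proposed ``formal sidestep'' of the existence question, on the other hand, does not hold up. The exact sequence of Proposition~\ref{prop:ses_BR} presents $\cB(R)$ as a quotient of $H(R)\oplus\cB(\cO)$, but by itself it places no constraint forcing the specific element $I_\cC$ to come from the $\cB(\cO)$ factor. And there is no $\cO$-level obstruction element to compare with unless one has already produced a polarised abelian variety with endomorphism ring $\cO$ — which is exactly the existence you were trying to avoid. Naturality of $\epsilon$ gives functoriality of the Grothendieck-group constructions, not the class in $\cB(\cO)$ itself. So the existence of $A_0$ cannot be dispensed with; it is the heart of the argument, and a complete proof must either establish it or cite a precise reference where it is established.
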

\begin{proof}
    This is part of {\cite[Prop.~7.1]{Howe96}}.
\end{proof}

\section{Polarisations of degree \texorpdfstring{$4$}{4} on abelian surfaces}\label{sec:pols4}
In Section \ref{sec:g3onsurf}, we showed that an abelian surface with Weil polynomial $f\in\PirrA\sqcup\PirrB$ contains an $\Fq$-irreducible curve of arithmetic genus $3$ if and only if it admits a polarisation with kernel of order $4$. 
We now apply the results developed in Section~\ref{sec:kernels} to characterise isogeny classes defined by a polynomial in $\PirrA\sqcup\PirrB$ not containing abelian surfaces with an $\Fq$-irreducible curve of genus $3$ lying on.

In this section, we assume that $f(t) \in \PirrA\sqcup\PirrB$.
As before, we denote the corresponding isogeny class by $\cC$ and set $K=\Q[x]/f(t)=\Q(\pi)$ and $K^+=\Q(\pi+q/\pi)$.
We fix also $R=\Z[\pi,q/\pi] \subset K$ and $R^+=\Z[\pi+q/\pi] \subset K$.

\begin{proposition}\label{prop:A_general}
    Let $f(t)\in\PirrA$.
    Then $2$ is inert in $K^+$ if and only if there is no attainable $R$-module of order $4$.
\end{proposition}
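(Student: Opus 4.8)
The plan is to use the machinery from Section~\ref{sec:kernels}, specialised to the case $f(t)\in\PirrA$. By Theorem~\ref{thm:deg_ord}, an attainable $R$-module of order $4$ corresponds precisely to a variety in $\cC$ with a polarisation of degree $4$, and by Theorem~\ref{thm:attainable} the attainable elements of $G(\Mod_R)$ are the effective elements of $Z(R)$ mapping to the obstruction $I_\cC\in\cB(R)$. So I would first reduce the existence of an attainable module of order $4$ to the existence of an effective class $[M]_R\in Z(R)$ with $|M|=4$ that maps to $I_\cC$; since $2$ is not inert in $K$ (Lemma~\ref{lemma:splitting_mod_2}) every $R$-module of $2$-power order has order a perfect square or involves the $\alpha_p$ factor, but as $\PirrA$ forces ordinarity or supersingularity (Lemma~\ref{lem:Pirrordinary}) and in the supersingular case $f(t)=t^4-qt^2+q^2$ with $q\in\{2,3\}$ — wait, no: $\PirrA$ supersingular has $b=-q$ but $q$ need not be $2$ or $3$ — in any case I need to handle the $\alpha_p$ contribution. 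Actually since $|M|=4=2^2$ and $p$ could be $2$, I should separate the case $p=2$ (where $\alpha_p$ has order $2$) carefully.

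Here is the cleaner route. By Lemma~\ref{lem:P_is_tensor_lin_fact}, there exists a finite $R$-module $M$ of order $4$ with $[M]_R\in Z(R)$ if and only if $f^+(t)\bmod 2$ has a linear factor, i.e.\ if and only if $2$ is \emph{not} inert in $R^+$, equivalently (after checking $R^+=\cO^+$ at $2$, or arguing via $f^+$ directly) not inert in $K^+$. This already gives one direction: if $2$ is inert in $K^+$, then $Z(R)$ contains no class of order $4$, hence a fortiori no attainable module of order $4$. For the converse, I must show that when $2$ is not inert in $K^+$, the class $[M]_R=t_{R/R^+}([R^+/\frl]_{R^+})$ produced by Lemma~\ref{lem:P_is_tensor_lin_fact} is not merely in $Z(R)$ but is actually \emph{attainable}, i.e.\ maps to $I_\cC$ in $\cB(R)$. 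The key input is that $K/K^+$ is unramified at all finite primes when $f\in\PirrA$ (this follows from Lemma~\ref{lem:if_ram_then_B}: if $K/K^+$ were ramified, $f$ would be in $\PirrB$), combined with Proposition~\ref{prop:fact_2}\ref{2inPA}: in the split-in-$K^+$ case each prime of $K^+$ above $2$ is inert in $K$, and in the inert-in-$K^+$ case the prime of $K^+$ above $2$ splits in $K$ with conjugate factors. In either case $\frl\cO=\frl$ is stable or its image behaves well, and I expect $[M]_R$ to lie in $i^*(\cB(\cO))$-compatible position; since $I_\cC\in i^*(\cB(\cO))$ by Proposition~\ref{prop:IC_in_BO} and $\cB(\cO)\simeq\Gal(K/K^+)$ has order $2$ by Proposition~\ref{prop:BO} (as $K/K^+$ is unramified at finite primes), I would compute the image of $[M]_R$ in $\cB(R)$ against $I_\cC$ using the exact sequence of Proposition~\ref{prop:ses_BR} and the fact that $[M]_R=t_{R/R^+}(\cdot)$ is a norm from $\cO$, hence lands in the image of $N\colon H(\cO)\to H(R)$, which by the exact sequence maps to $0$ in $\cB(R)$ only if... — I need $I_\cC=0$ or $I_\cC\in\psi(H(R))$ and then adjust by an effective representative.

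The subtle point — and the main obstacle — is that a single class in $Z(R)$ mapping to $I_\cC$ need not have an \emph{effective} representative of order exactly $4$; Theorem~\ref{thm:attainable} requires effectivity. So the real work is: given that $2$ is not inert in $K^+$, exhibit an effective element of $Z(R)$ of order $4$ mapping to $I_\cC$. I would argue as follows. The class $[M]_R=t_{R/R^+}([R^+/\frl]_{R^+})$ is effective of order $4$ and lies in $Z(R)$. Its image $\psi([M]_R)\in\cB(R)$ I claim equals $I_\cC$: indeed $[M]_R$ is a norm from an $\cO$-module (namely $\cO/\frl\cO$ or a direct sum of the two conjugate primes above $2$ in $\cO$), so its image lies in $N$-image, and by Proposition~\ref{prop:ses_BR} together with Proposition~\ref{prop:IC_in_BO} and the fact that $I_\cC$ itself comes from $\cB(\cO)$, one matches them. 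If the images differ, one twists $[M]_R$ by an element of $B(R)\cdot\Pr_R(K^\dagger)$ — which changes the representative but not effectivity if done by adding $P+\overline P$ for suitable $P$ — to land exactly on $I_\cC$ while keeping order $4$; here I would use that the generating primes of $H(R)$ at $2$ are controlled by Proposition~\ref{prop:fact_2}, so the $\F_2$-vector space $H(R)$ is small and the adjustment is explicit. I expect the argument to split into the three cases of Proposition~\ref{prop:fact_2}\ref{2inPA} (with the ramified-in-$K^+$ subcase actually occurring, and there $\cB(\cO)=0$ so $I_\cC=0$ and any effective class in $Z(R)$ of order $4$ works, which exists by Lemma~\ref{lem:P_is_tensor_lin_fact}), and the split-in-$K^+$ subcase being the one requiring the most care because there are two distinct primes of $K^+$ above $2$, each contributing a linear factor and hence an order-$4$ module, and I must check at least one of them gives an attainable class.
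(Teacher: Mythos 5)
Your plan has the right skeleton and is, at a high level, the same route as the paper's (Lemma~\ref{lem:P_is_tensor_lin_fact} to pin down the unique order-$4$ candidate in $Z(R)$, the observation that $K/K^+$ is unramified for $f\in\PirrA$ so $\cB(\cO)\simeq\Gal(K/K^+)$ has order $2$, and Propositions~\ref{prop:IC_in_BO} and~\ref{prop:ses_BR} to compare the image of the candidate in $\cB(R)$ with $I_\cC$). But two steps in the middle are wrong or missing. First, the shortcut \enquote{$f^+\bmod 2$ has a linear factor if and only if $2$ is not inert in $K^+$} does not hold and is the crux of the hard direction. For $f\in\PirrA$ with $q$ odd one always has $f^+\equiv (t+1)^2\bmod 2$, yet $2$ can perfectly well be inert in $K^+$ because $R^+=\Z[\pi+q/\pi]$ need not be maximal at $2$; your parenthetical \enquote{after checking $R^+=\cO^+$ at $2$} flags exactly the thing that fails. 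The paper's proof is structured to handle precisely this situation: when $f^+$ factors mod $2$, there is a candidate $[R/\frl R]_R\in Z(R)$ of order $4$, and one must then \emph{compute} whether it maps to $I_\cC$ or not. It is in this computation, via Proposition~\ref{prop:fact_2} and the exact sequence of Proposition~\ref{prop:ses_BR}, that the inert-versus-not-inert dichotomy in $K^+$ actually enters (inert in $K^+$ means $2\cO^+$ splits in $K$, so the Artin symbols of the primes of $\cO$ above $2$ are trivial and the image in $\cB(\cO)$ cannot hit the nonzero element $\alpha$). You gesture at this but never carry it out.

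Second, in your final paragraph you assert that in the subcase where $2$ ramifies in $K^+$ one has $\cB(\cO)=0$ and $I_\cC=0$. This conflates the rational prime $2$ ramifying in $K^+/\Q$ with the extension $K/K^+$ being ramified; Proposition~\ref{prop:BO} concerns the latter, and by Lemma~\ref{lem:if_ram_then_B} the extension $K/K^+$ is unramified for every $f\in\PirrA$, so $\cB(\cO)\simeq\Z/2\Z\neq 0$ in all three subcases of Proposition~\ref{prop:fact_2}.\ref{2inPA}. Moreover, $I_\cC=0$ is impossible here: by Theorem~\ref{thm:attainable} that would mean $\cC$ contains a principally polarised variety, contradicting the very definition of $\PirrA$. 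The paper leans on the opposite fact, namely $I_\cC=i^*(\alpha)$ with $\alpha$ the unique nonzero element of $\cB(\cO)$. Finally, the worry about twisting to preserve effectivity is a red herring: the last part of Lemma~\ref{lem:P_is_tensor_lin_fact} says the class of any order-$4$ element of $Z(R)$ is pinned down as $t_{R/R^+}([R^+/\frl]_{R^+})$, so there is a single candidate to test and nothing to adjust.
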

\begin{proof}
    If $f^+(t) \bmod 2$ is irreducible then $2$ is inert in $K^+$ and there is no attainable $R$-module of order $4$ by Lemma~\ref{lem:P_is_tensor_lin_fact} and Theorem~\ref{thm:attainable}.
    So, we assume that $f^+(t)\bmod 2$ has a linear factor for the rest of the proof.
    Hence, again by Lemma~\ref{lem:P_is_tensor_lin_fact} and Theorem~\ref{thm:attainable}, if there is an attainable $R$-module $M$ of order $4$ then we must have
    \[ [M]_R=[R^+/\frl\otimes_{R^+}R]_R=[R/\frl R]_R, \]
    where $\frl$ is a maximal ideal of $R^+$ such that $R^+/\frl\simeq \F_2$. 
    By Lemma~\ref{lem:if_ram_then_B}, $K/K^+$ is unramified.
    So, the Artin map induces an isomorphism $\Gal(K/K^+)\simeq \cB(\cO)$ by Proposition \ref{prop:BO}.
    Let $\alpha$ be the non-zero element of $\cB(\cO)$, which corresponds to the Artin symbol of any maximal ideal of $\cO^+$ that stays inert in $K$.
    By Theorem~\ref{thm:attainable} and Proposition~\ref{prop:IC_in_BO}, $I_\cC$ is non-zero and in $i^*(\cB(\cO))$.
    Hence $I_\cC = i^*(\alpha)$.
    Let $z_\frl$ be the image of $[R/\frl R]_R$ in $\cB(R)$.
    Note that 
    \[ z_\frl = (\Psi \oplus i^*)((x_\frl,0)), \]
    where $x_\frl$ is the image of $[R/\frl R]_R$ in $H(R)$.
    We obtain that $z_\frl - I_\cC = (\Psi \oplus i^*)((x_\frl,\alpha))$.
    Hence there is an $R$-module $M$ of order $4$ which is attainable if and only if 
    \[ (x_\frl,\alpha) \in \ker((\Psi \oplus i^*)) = (N,-\chi)(H(\cO)), \] 
    where the equality holds by Proposition~\ref{prop:ses_BR}.

    Assume that $2$ is inert in $K^+$ and write $\frm=2\cO^+$.
    By Proposition~\ref{prop:fact_2}, we see that $\frm$ splits in $K$, say $\frm\cO = \frM\overline{\frM}$.
    A preimage of $x_\frl$ in $H(\cO)$ via $N:H(\cO)\to H(R)$ must be an $\F_2$-linear combination of the images $y_\frM$ and $y_{\overline{\frM}}$ of $\frM$ and $\overline{\frM}$ in $H(\cO)$.
    Since $\frm$ is split in $K$, then the Artin symbols of~$\frM$ and $\overline{\frM}$ are trivial in $\Gal(K/K^+)\simeq\cB(\cO)$.
    Hence any $\F_2$-linear combination of the images $y_\frM$ and $y_{\overline{\frM}}$ won't be mapped by $\chi:H(\cO)\to \cB(\cO)$ to $\alpha$.
    This means that $(x_\frl,\alpha)$ is not in $(N,-\chi)(H(\cO))$.
    Hence, there is no $R$-module $M$ of order $4$ which is attainable.

    Assume now that $2$ is not inert in $K^+$.
    Let $\frM$ be a maximal ideal of $\cO$ containing $\frl R$.
    Denote the image of $[\cO/\frM]_\cO$ in $H(\cO)$ by $y_\frM$.
    By Proposition~\ref{prop:fact_2}, we have that $\frM=\frm\cO$ for a maximal ideal $\frm$ of $\cO^+$.
    This implies that $\chi(y_\frM)=\alpha$.
    So, to prove that $R/\frl R$ is attainable, we are left to show that $N(y_\frM)=x_\frl$.
    We first consider the case that $\frl R$ is a maximal ideal of $R$.
    Note that $\frl R=\frM\cap R$ and we have an isomorphism $R/\frl R\simeq \cO/\frM$.
    Hence $N(y_\frM)=x_\frl$.
    Therefore, $R/\frl R$ is attainable.
    Now we consider the case when $\frl R$ is not a maximal ideal of $R$.
    Set $\frL=\frM\cap R$.
    The natural surjective map $R/\frl R \onto R/\frL$ is not an isomorphism.
    Since $\frL=\overline\frL$ and $[R/\frl R]_R\in Z(R)$ we get that
    \[ [R/\frl R]_R = 2[R/\frL]_R. \]
    Moreover, also $[\cO/\frM]_R = 2[R/\frL]_R$.
    Hence, again, $N(y_\frM)=x_\frl$.
    Therefore, as before, $R/\frl R$ is attainable.
\end{proof}

\begin{proposition}\label{prop:B_general}
    Assume that $f(t)\in\PirrB$.
    Write $2\cO^+=\frm^2$ and let $\frl=\frm\cap R^+$ be the unique maximal ideal of $R^+$ above $2$.
    Then there is an $R$-module $M$ of order $4$ which is attainable
    if and only if $\frl R$ is not a maximal ideal of $R$.
\end{proposition}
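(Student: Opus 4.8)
The plan is to follow the same strategy as in the proof of Proposition~\ref{prop:A_general}, but exploit the fact that now $2$ is ramified in $K^+$, so that there is a unique maximal ideal $\frm$ of $\cO^+$ above $2$, and correspondingly a unique candidate attainable module of order $4$. First I would observe that, since $\frm$ is the unique maximal ideal of $\cO^+$ over $2$, the ideal $\frl=\frm\cap R^+$ is the unique maximal ideal of $R^+$ over $2$, and $f^+(t)\bmod 2$ has (up to multiplicity) a single linear factor; by Lemma~\ref{lem:P_is_tensor_lin_fact} the only finite $R$-module of order $4$ whose class lies in $Z(R)$ is $M$ with $[M]_R=t_{R/R^+}([R^+/\frl]_{R^+})=[R/\frl R]_R$. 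So an attainable $R$-module of order $4$ exists if and only if this particular $[R/\frl R]_R$ is attainable, i.e.\ (by Theorem~\ref{thm:attainable}) if and only if its image in $\cB(R)$ equals the obstruction $I_\cC$.

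Next I would split according to whether $K/K^+$ is ramified. If $K/K^+$ is ramified, then by Lemma~\ref{lem:ram_tot_ram} we are in the case $b=2-2q$ with $q$ odd, $2$ is totally ramified in $K$, and $\frl R$ (equivalently $\frm\cO$) is \emph{not} a maximal ideal of $R$ (resp.\ $\cO$): indeed $2$ totally ramified means $\frM=\frM^{(4)}$ lies over $\frm$ with $\frm\cO=\frM^2$, so $\frm\cO$ is not maximal, and pulling back along $R\subseteq\cO$ one checks $\frl R$ is not maximal either (here I would use the Kummer--Dedekind description from the proof of Lemma~\ref{lem:ram_tot_ram}, namely $f(t)\equiv(t+1)^4\bmod 2$ with a regular prime). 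On the other hand, by Proposition~\ref{prop:BO}, $\cB(\cO)=0$ since $K/K^+$ ramifies at a finite prime, so by Proposition~\ref{prop:IC_in_BO} the obstruction $I_\cC=0$; combined with Theorem~\ref{thm:attainable}, every effective element of $Z(R)$ is attainable, in particular $[R/\frl R]_R$ is. So in the ramified case both sides of the claimed equivalence hold, matching up.

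In the unramified case $K/K^+$ is unramified, so by Proposition~\ref{prop:BO} the Artin map gives $\cB(\cO)\simeq\Gal(K/K^+)\cong\Z/2\Z$; let $\alpha$ be its nonzero element. By Proposition~\ref{prop:IC_in_BO}, $I_\cC\in i^*(\cB(\cO))$, so $I_\cC$ is either $0$ or $i^*(\alpha)$. Here I would invoke Lemma~\ref{lem:if_ram_then_B}: since $f\in\PirrB$ and $K/K^+$ is unramified, there is no maximal ideal of $\cO^+$ dividing $(\pi-q/\pi)$ that stays inert in $K$, so by Howe's criterion \cite[Thm.~1.1]{Howe96} the isogeny class contains no principal polarisation, hence $I_\cC\neq 0$ and $I_\cC=i^*(\alpha)$. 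Now run the same computation as in Proposition~\ref{prop:A_general}: writing $z_\frl$ for the image of $[R/\frl R]_R$ in $\cB(R)$ and $x_\frl$ for its image in $H(R)$, one has $z_\frl=(\psi\oplus i^*)((x_\frl,0))$, so $z_\frl=I_\cC$ iff $(x_\frl,\alpha)\in\ker(\psi\oplus i^*)=(N,-\chi)(H(\cO))$ by Proposition~\ref{prop:ses_BR}. Whether such a preimage exists is governed exactly by the splitting type of $\frl$ in $R$: if $\frl R$ is not a maximal ideal of $R$ then $\frl\cO^+=\frm$ splits in $K$ (the only unramified possibility consistent with $\frl R$ non-maximal and $2$ not totally split — note $2$ cannot be inert in $K$ by Lemma~\ref{lemma:splitting_mod_2}), say $\frm\cO=\frM\overline\frM$, and proceeding verbatim as in Proposition~\ref{prop:A_general} one finds a preimage $y_\frM\in H(\cO)$ with $N(y_\frM)=x_\frl$ and $\chi(y_\frM)=\alpha$ (using $[\cO/\frM]_R=[R/\frl R]_R$ or $=2[R/\frL]_R$ according as $\frl R$ is maximal or not), so $M$ is attainable; whereas if $\frl R$ \emph{is} maximal in $R$ then $\frm$ stays inert in $K$, the Artin symbol of $\frM=\frm\cO$ is $\alpha$ but $N(y_\frM)=x_\frl$ would force, as in the first case of Proposition~\ref{prop:A_general}, a contradiction — more precisely any preimage of $x_\frl$ under $N$ must be an $\F_2$-combination of classes of ideals with \emph{trivial} Artin symbol, so $(x_\frl,\alpha)\notin(N,-\chi)(H(\cO))$ and $M$ is not attainable. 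The main obstacle I anticipate is the careful bookkeeping in the unramified, $\frl R$-maximal case: identifying exactly which classes in $H(\cO)$ lie over $x_\frl$ under the norm and verifying none of them has Artin symbol $\alpha$ — this is where the structure of $H(\cO)$ in terms of generating primes and the precise residue-degree calculation for $\frm$ in $K$ must be used, mirroring but not identical to the argument in Proposition~\ref{prop:A_general}.
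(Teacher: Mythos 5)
Your proposal has a critical error in the unramified case that inverts the logic of the whole argument. You claim that in the unramified case Howe's criterion implies "the isogeny class contains no principal polarisation, hence $I_\cC\neq 0$". This is wrong. Every $f\in\PirrB$ corresponds, by definition, to the isogeny class of a Weil restriction, which is principally polarisable (this is exactly the content of Theorem~\ref{thm:no_g1g2}.\eqref{weil_restr}); hence by Theorem~\ref{thm:attainable} one always has $I_\cC=0$ for $f\in\PirrB$, ramified or not. You also misuse Lemma~\ref{lem:if_ram_then_B}: that lemma says that certain ramification conditions imply $f\in\PirrB$, not the converse, so you cannot deduce from $f\in\PirrB$ and unramifiedness that no prime dividing $\pi-q/\pi$ stays inert; and even if you could, Howe's Theorem~1.1 is a sufficient condition for principal polarisability, not a characterisation, so its hypotheses failing would not yield $I_\cC\neq 0$.

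This error cascades. With $I_\cC=i^*(\alpha)$ (your claim), the criterion is $(x_\frl,\alpha)\in(N,-\chi)(H(\cO))$; in your inert subcase you then correctly observe that $\chi(y_\frM)=\alpha$ (since $\frm$ inert gives nontrivial Artin symbol), which actually makes $(x_\frl,\alpha)$ lie in the image — so your own computation would prove $M$ \emph{is} attainable when $\frl R$ is maximal, the opposite of what the proposition asserts. Meanwhile in your split subcase you claim $\chi(y_\frM)=\alpha$, but the Artin symbol of a prime above a split $\frm$ is trivial, so $\chi(y_\frM)=0$; your assertion that the argument proceeds ``verbatim as in Proposition~\ref{prop:A_general}'' ignores that there $\frM=\frm\cO$ is the image of an inert prime. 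The paper's argument is much simpler once $I_\cC=0$ is in hand: attainability reduces to $z_\frl=0$ in $\cB(R)$; if $\frl R$ is not maximal then $[R/\frl R]_R$ is a sum $[R/\frn]_R+[R/\overline\frn]_R$ hence lies in $B(R)$ by Lemma~\ref{lemma:symmetric}, giving $z_\frl=0$; if $\frl R$ is maximal one shows $R/\frl R\simeq\cO/\frM\simeq\F_4$, $\frm$ is inert, $K/K^+$ is unramified, and the unique preimage $y_\frM$ of $x_\frl$ has $\chi(y_\frM)=\alpha\neq 0$, so $(x_\frl,0)$ is not in the image. Finally, in your ramified case the claim ``every effective element of $Z(R)$ is attainable'' overstates Theorem~\ref{thm:attainable} (attainable classes must also map to $I_\cC$ in $\cB(R)$), though the conclusion happens to be correct because $[R/\frl R]_R\in B(R)$ there.
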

\begin{proof}
    Observe that $f(t) \bmod 2$ is a square of a linear factor.
    Let $\frl$ be the corresponding maximal ideal of $R^+$.
    By Lemma~\ref{lem:P_is_tensor_lin_fact} and Theorem~\ref{thm:attainable}, if there is an attainable $R$-module $M$ of order $4$ then we must have
    \[ [M]_R=[R^+/\frl\otimes_{R^+}R]_R=[R/\frl R]_R. \]
    Since $f(t)\in\PirrB$, the obstruction element $I_\cC$ is trivial by Theorem~\ref{thm:attainable}.
    Hence, there is an $R$-module $M$ of order $4$ which is attainable if and only if the image $z_\frl$ of $[R/\frl R]_R$ in $\cB(R)$ is trivial.
    Note that
    \[ z_\frl = (\Psi \oplus i^*)((x_\frl,0)), \]
    where $x_\frl$ is the image of $[R/\frl R]_R$ in $H(R)$.
    Also, the $R$-module $R/\frl R$ has length at most $2$ and that it has length $1$ if and only if $\frl R$ is a maximal ideal of $R$.

    If the length of $R/\frl R$ is $2$ then $[R/\frl R]_R=[R/\frn]_R+[R/\overline{\frn}]_R$ for a maximal ideal $\frn$ of $R$, with possibly $\frn=\overline{\frn}$, and so, by Lemma~\ref{lemma:symmetric}, $[R/\frl R]_R$ is in $B(R)$.
    Hence, $x_\frl$ is trivial in $H(R)$ and so $z_\frl = I_\cC$ in $\cB(R)$.
    This means that $R/\frl R$ is an attainable $R$-module of order~$4$.

    To conclude the proof, we need to show that if the length of $R/\frl R$ is $1$, that is, $\frl R$ is a maximal $R$-ideal, then $(x_\frl,0)$ is not in 
    \[ \ker((\Psi \oplus i^*)) = (N,-\chi)(H(\cO)), \] 
    where the equality holds by Proposition~\ref{prop:ses_BR}.
    So, assume that $\frl R$ is a maximal ideal of $R$.
    Let $\frM $ be a maximal ideal of $\cO$ above $\frl R$.
    Since we have an inclusion of residue field $R/\frl R\to \cO/\frM$ and $2$ is not inert in $\cO$ 
    by Proposition~\ref{prop:fact_2}, then 
    \[ R/\frl R \simeq \cO/\frM \simeq \F_4. \]
    This implies that $[\cO/\frM]_R = [R/\frl R]_R$.
    If we denote the image of $[\cO/\frM]_{\cO}$ in $H(\cO)$ by $y_\frM$, we get $N(y_\frM) = x_\frl$. 
    By Proposition~\ref{prop:fact_2}, since $\cO/\frM\simeq\F_4$, we get that $\frm$ is inert in $K$.
    This means that the Artin symbol $\left(\frac{K/K^+}{\frm}\right)$ is a generator of $\Gal(K/K^+)$.
    We now claim that $K/K^+$ is unramified.
    If not, then by Lemma~\ref{lem:ram_ram} and Lemma~\ref{lem:ram_tot_ram}, we would get that there is unique maximal ideal $\frm$ of $\cO^+$ above $2$ which ramifies in $\cO$, say $\frm\cO = \frM^2$. 
    For such $\frM$ we would have $\cO/\frM\simeq \F_2$.
    Since we know that every maximal ideal of $\cO$ above $\frl$ has residue field isomorphic to $\F_4$, we deduce that $K/K^+$ is unramified.
    Then, by Proposition \ref{prop:BO}, we get that $\chi(y_\frM)\neq 0$ in $\cB(\cO)$.
    So $(x_\frl,0)$ is not in the image of $(N,-\chi)$ and, therefore, there is no $R$-module of order $4$ which is attainable.
\end{proof}

\smallskip

In Theorem~\ref{thm:kers_size_4}, we wrap-up the discussion above.
Moreover, in certain cases with $\cC$ ordinary, we show that if there is an abelian variety with polarisation with kernel of order $4$ then there is an abelian variety with polarisation with kernel of order $4$ and maximal endomorphism ring.
To do so, we use results from \cite{Howe95}. 
The construction of $\cB(\cO)$ in \cite{Howe95} is slightly different form the one in \cite{Howe96} that we have presented above, but certainly equivalent in view of Proposition~\ref{prop:BO} and \cite[Prop.~10.1]{Howe95}.

\begin{definition}
    A finite $R$-module $M$ is \emph{strongly attainable} 
    if there exists an abelian variety $B$ in $\cC$ such that $\End(B)$ is the maximal order of $K$ with a polarisation $\varphi:B\to B^\vee$ such that $\epsilon([\ker\varphi]_\cC) = [M]_R$.
\end{definition}

Clearly strongly attainable implies attainable, while the contrary does not hold in general.
A characterisation of strongly attainable $R$-modules is given in \cite[Prop.~5.7]{Howe95}, repeated below as Proposition~\ref{prop:strong_att} for convenience.
Such characterisation relates modules with a particular element $I_{K,\Phi}$ of $\cB(\cO)$ which depends on a choice of a CM-type $\Phi$ of $K$.
The definition of $I_{K,\Phi}$ is given in \cite[Def.~5.2]{Howe95}.
For our purposes, it sufficient to know whether it is trivial or not in $\cB(\cO)$.
\begin{lemma}\label{lem:IK}
    If $f\in \PirrA$ is ordinary then $I_{K,\Phi} \neq 0$.
    If $f\in \PirrB$ is ordinary and either 
    \begin{itemize}
        \item $K/K^+$ is ramified, or
        \item $K/K^+$ is unramified and there is no maximal ideal of $\cO^+$ dividing $\pi-q/\pi$ which stays inert in $K/K^+$,
    \end{itemize}
    then $I_{K,\Phi} = 0$.
\end{lemma}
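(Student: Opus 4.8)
The statement concerns the element $I_{K,\Phi}\in\cB(\cO)$ from \cite[Def.~5.2]{Howe95}, and we only need to decide whether it is trivial. By Proposition~\ref{prop:BO}, $\cB(\cO)$ is either $0$ (if $K/K^+$ ramifies at a finite prime) or isomorphic via the Artin map to $\Gal(K/K^+)\simeq\Z/2\Z$ (if $K/K^+$ is unramified at all finite primes). I would organise the proof around this dichotomy, treating the $\PirrB$ case first since it is the shorter one.

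\emph{The $\PirrB$ case.} If $K/K^+$ is ramified, then $\cB(\cO)=0$ by Proposition~\ref{prop:BO}, so $I_{K,\Phi}=0$ trivially. Suppose instead $K/K^+$ is unramified and no maximal ideal of $\cO^+$ dividing $\pi-q/\pi$ stays inert in $K$. By \cite[Thm.~1.1]{Howe96} (used as in the proof of Lemma~\ref{lem:if_ram_then_B}), these are precisely the conditions under which $I_\cC=0$, i.e. $\cC$ contains a principally polarised surface; and since $f\in\PirrB$ this principally polarised surface is a Weil restriction. I would then invoke the precise description of $I_{K,\Phi}$ from \cite[Def.~5.2 and Prop.~5.7]{Howe95}: $I_{K,\Phi}$ is trivial exactly when the CM-type $\Phi$ can be realised by the reduction of a CM abelian variety in characteristic $0$, equivalently when the relevant polarised CM structure exists. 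For a Weil restriction of an elliptic curve the CM-type is induced from the quadratic subfield, and one checks directly (using that $K\supseteq\Q(i)$ or $K\supseteq\Q(\sqrt{-2})$ coming from $(\pi-q/\pi)^2=-b-2q\in\{-1,-2\}$ in the ordinary $\PirrB$ cases, as computed in the proof of Lemma~\ref{lem:if_ram_then_B}) that $\Phi$ is the base-change of the CM-type of that quadratic field, forcing $I_{K,\Phi}=0$. Concretely I expect the cleanest route is: show $I_{K,\Phi}$ and $I_\cC$ differ by the image of a class that is killed in $\cB(\cO)$ under the unramified hypothesis, so $I_{K,\Phi}=I_\cC=0$.

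\emph{The $\PirrA$ case.} Here $f\in\PirrA$ and we must show $I_{K,\Phi}\neq0$; by Theorem~\ref{thm:no_g1g2}.\eqref{not_isog_PP} and Theorem~\ref{thm:attainable}, we already know $I_\cC\neq0$, and by Proposition~\ref{prop:IC_in_BO} it lies in $i^*(\cB(\cO))$, so in particular $\cB(\cO)\neq0$ and $K/K^+$ is unramified at all finite primes. By Lemma~\ref{lemma:MinK}, $K\supseteq M=\Q(\zeta_3)$, and $2$ is inert in $M$. The idea is to compare $I_{K,\Phi}$ with the obstruction coming from the subfield $M$: the CM-type $\Phi$ of $K$ restricts to the unique CM-type of the imaginary quadratic field $M=\Q(\zeta_3)$, and \cite[Prop.~5.7 and its proof]{Howe95} (or the computation of $I_{K,\Phi}$ for cyclotomic-type fields) shows that $I_{K,\Phi}$ is the Artin symbol of a prime ideal whose Frobenius in $\Gal(K/K^+)$ is nontrivial precisely because $2$ (equivalently, the prime of $K^+$ below a prime of $\cO$ over $2$) fails to split in $K/K^+$ — this is exactly the content of the factorisation analysis in Proposition~\ref{prop:fact_2}\ref{2inPA}. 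I would make this precise by using the explicit formula for $I_{K,\Phi}$ in \cite[Def.~5.2]{Howe95} in terms of $\Phi$ and a chosen totally positive generator, reducing to a local computation at $2$, and showing the resulting class is the nontrivial element $\alpha\in\cB(\cO)\simeq\Gal(K/K^+)$.

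\emph{Main obstacle.} The crux is unwinding the definition of $I_{K,\Phi}$ from \cite[Def.~5.2]{Howe95}: it is a fairly involved Galois-cohomological/idelic construction, and the real work is translating "$\Phi$ restricts to the CM-type of a quadratic subfield (either $M=\Q(\zeta_3)$, or the quadratic field cut out by $\pi-q/\pi$)" into the assertion that $I_{K,\Phi}$ equals a specific explicitly-identifiable element of $\cB(\cO)$. In the $\PirrB$ case I must also be careful that the hypothesis "no maximal ideal dividing $\pi-q/\pi$ stays inert" is exactly what is needed to make that quadratic subfield argument go through (this is where Lemma~\ref{lem:ram_ram} and Lemma~\ref{lem:ram_tot_ram} feed in, ruling out ramified $K/K^+$). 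Everything else — the vanishing/non-vanishing of $\cB(\cO)$, the inertness of $2$ in $\Q(\zeta_3)$, the factorisation of $2$ — is already packaged in the lemmas above.
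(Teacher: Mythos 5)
Your dichotomy — $\cB(\cO)=0$ when $K/K^+$ ramifies, else $\cB(\cO)\simeq\Gal(K/K^+)$ via Artin — is the right frame, and the ramified $\PirrB$ case is handled correctly (it matches the paper, which cites \cite[Prop.~11.1]{Howe95}). But in the unramified cases, which are the substance of the lemma, the proposal does not actually give a proof.

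For $\PirrB$ unramified you write that ``the cleanest route is: show $I_{K,\Phi}$ and $I_\cC$ differ by the image of a class that is killed in $\cB(\cO)$'', and for $\PirrA$ you propose a reduction to the CM-type of $M=\Q(\zeta_3)$ and a local computation at~$2$. In both cases you explicitly flag the remaining work as an unresolved obstacle (``The crux is unwinding the definition of $I_{K,\Phi}$\ldots''), so this is a plan, not a proof. The missing ingredient is that Howe already proved exactly the comparison you are hoping for: \cite[Prop.~11.3, Cor.~11.4]{Howe95} relate $I_{K,\Phi}$ to the existence of a principal polarisation in the ordinary case, in precisely the form needed here. With those in hand, $f\in\PirrA$ ordinary gives ``no pp'' hence $I_{K,\Phi}\neq 0$, and $f\in\PirrB$ ordinary with $K/K^+$ unramified and no inert prime dividing $\pi-q/\pi$ gives ``pp'' hence $I_{K,\Phi}=0$ — no CM-type bookkeeping or local computation at $2$ is required. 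Your speculative detour through the restriction of $\Phi$ to $\Q(\zeta_3)$ (or to $\Q(i)$, $\Q(\sqrt{-2})$) is not obviously correct as stated, and even if salvageable would be much more work than the cited results; in particular, the claim that $I_{K,\Phi}$ is trivial ``exactly when the relevant polarised CM structure exists'' and that the relevant CM-type is ``base-changed from a quadratic subfield'' would need to be justified carefully against \cite[Def.~5.2]{Howe95}, and you do not do so. In short: right shape, but the load-bearing step — the comparison of $I_{K,\Phi}$ with principal polarisability — is asserted rather than established, and Howe's Propositions 11.3 and Corollary 11.4 are the results that establish it.
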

\begin{proof}
    Assume that $f \in \PirrA$ is ordinary.
    Then $\cC$ does not contain a principally polarised abelian variety.
    The result follows from \cite[Prop.~11.3]{Howe95} and \cite[Cor.~11.4]{Howe95}.

    Assume from now that $f\in \PirrB$ is ordinary.
    If $K/K^+$ is ramified then the $I_{K,\Phi}=0$ by \cite[Prop.~11.1]{Howe95}.
    If $K/K^+$ is unramified and there is no maximal ideal of $\cO^+$ dividing $\pi-q/\pi$ which stays inert in $K/K^+$ then the result follows, again, by combining \cite[Prop.~11.3]{Howe95} and \cite[Cor.~11.4]{Howe95} and the observation that $\cC$ contains a principally polarised abelian variety.
\end{proof}

Recall that, by Proposition~\ref{prop:BO}, we have a natural surjective homomorphism from $G(\Mod_{\cO^+})$ to $\cB(\cO)$ defined by sending a maximal ideal to its Artin symbol.

\begin{proposition}{\cite[Prop.~5.7]{Howe95}}\label{prop:strong_att}
    Let $M$ be finite $R$-module.
    Then $M$ is strongly attainable if and only if $M\simeq \cO/\mathfrak{a}\cO$ for some $\cO^+$-ideal $\mathfrak{a} \subseteq \cO^+$ such that the image in $\cB(\cO)$ of the class of $\cO^+/\mathfrak{a}$ in $G(\Mod_{\cO^+})$ equals $I_{K,\Phi}$.
\end{proposition}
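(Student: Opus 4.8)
The plan is to translate the notion of strong attainability into the language of fractional ideals of $\cO$ and then to identify the resulting solvability condition with the obstruction $I_{K,\Phi}\in\cB(\cO)$. Via Deligne's equivalence (for ordinary $\cC$, which is the relevant case) or the analogous CM-theoretic dictionary used in \cite{Howe95}, an abelian variety $B$ in $\cC$ with $\End(B)=\cO$ corresponds, up to isomorphism, to a fractional $\cO$-ideal $I$ modulo principal ideals; the dual $B^\vee$ corresponds to the conjugate trace dual $\mathfrak{d}_{K/\Q}^{-1}\overline{I}^{-1}$ (in the appropriate normalisation); and a polarisation $\varphi\colon B\to B^\vee$ is multiplication by a $\Phi$-positive, totally imaginary $\xi\in K^\times$ with $\xi I\subseteq\mathfrak{d}_{K/\Q}^{-1}\overline{I}^{-1}$. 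Unwinding the definition of $\epsilon$ on the reduced part one gets $\epsilon([\ker\varphi]_\cC)=[\cO/\mathfrak{b}]_\cO$, viewed as an $R$-module, where $\mathfrak{b}=(\xi)\,\mathfrak{d}_{K/\Q}\,I\overline{I}$ is an \emph{integral} ideal of $\cO$; note also that $\ker\varphi\cong\mathfrak{d}_{K/\Q}^{-1}\overline{I}^{-1}/\xi I$ is a \emph{cyclic} $\cO$-module. Thus $M$ is strongly attainable exactly when $M\cong\cO/\mathfrak{b}$ for some $\mathfrak{b}$ arising in this way.

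For the implication $\Rightarrow$: since $\varphi$ is a polarisation, $\varphi=\varphi^\vee$, hence $\widehat{\ker\varphi}\cong\ker\varphi$, so $[\ker\varphi]_\cO$ is symmetric; it also lies in $Z(\cO)$ (this uses the square-order property of a polarisation kernel together with a local computation of residue degrees, exactly as for $Z(R)$ in Theorem~\ref{thm:attainable}). Since $\cO=\overline{\cO}$ is locally free of rank $2$ over $\cO^+$, Lemma~\ref{lemma:tensor_ZR} gives $[\ker\varphi]_\cO=t_{\cO/\cO^+}(P')$ for some effective $P'\in G(\Mod_{\cO^+})$; writing $P'=\sum_\mathfrak{p} n_\mathfrak{p}[\cO^+/\mathfrak{p}]$ and matching this against the cyclic module $\cO/\mathfrak{b}$ (using that $[\cO/\mathfrak{p}\cO]_\cO$ equals $[\cO/\frM]+[\cO/\overline{\frM}]$, $[\cO/\frM]$, or $2[\cO/\frM]$ according as $\mathfrak{p}$ splits, is inert, or ramifies in $K/K^+$) forces $\mathfrak{b}$ to have even valuation at every prime of $\cO$ ramified over $\cO^+$ and equal valuations at conjugate split primes, i.e.\ $\mathfrak{b}=\mathfrak{a}\cO$ for an $\cO^+$-ideal $\mathfrak{a}\subseteq\cO^+$. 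This yields $M\cong\cO/\mathfrak{a}\cO$. Finally, the admissible $\mathfrak{a}$ are cut out by the condition that the class of $\cO^+/\mathfrak{a}$ in $\cB(\cO)$ equals $I_{K,\Phi}$, because $I_{K,\Phi}$ is defined in \cite[Def.~5.2]{Howe95} precisely to encode, inside $\cB(\cO)$, the discrepancy between the ideal classes of $B$ and $B^\vee$ together with the different and the CM-type $\Phi$.

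For the implication $\Leftarrow$: given $\mathfrak{a}\subseteq\cO^+$ whose class maps to $I_{K,\Phi}$ in $\cB(\cO)$, one has to produce a fractional $\cO$-ideal $I$ and a $\Phi$-positive totally imaginary $\xi\in K^\times$ with $(\xi)\,\mathfrak{d}_{K/\Q}\,I\overline{I}=\mathfrak{a}\cO$; then $B\leftrightarrow[I]$ with $\varphi=$ multiplication by $\xi$ realises $M\cong\cO/\mathfrak{a}\cO$ as a polarisation kernel. The ideal part of this requires $(\xi)^{-1}\mathfrak{d}_{K/\Q}^{-1}\mathfrak{a}\cO$ to lie in the image of $I\mapsto I\overline{I}$ on $\mathrm{Pic}(\cO)$, and the obstruction to solving it---once the positivity of $\xi$ at the archimedean places relative to $\Phi$, and the subgroup $K^\dagger$ of totally positive squares used to define $\cB(\cO)$, are incorporated---lies exactly in $\cB(\cO)$, which by Proposition~\ref{prop:BO} is trivial when $K/K^+$ ramifies at a finite prime and is $\Gal(K/K^+)$ via the Artin map otherwise. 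By the very definition of $I_{K,\Phi}$ this obstruction vanishes precisely when the Artin symbol of $\mathfrak{a}$ equals $I_{K,\Phi}$, which is the hypothesis.

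The step I expect to be the main obstacle is the identification, in the $\Leftarrow$ direction, of the naive obstruction coming from the ideal equation---including the archimedean positivity conditions and the role of $K^\dagger$---with Howe's combinatorially defined element $I_{K,\Phi}$ of \cite[Def.~5.2]{Howe95}; this is essentially a genus-theory computation in class field theory and is the technical heart of the argument. A secondary, more routine, point is checking that polarisation kernels of abelian varieties with maximal endomorphism ring indeed belong to $Z(\cO)$, so that Lemma~\ref{lemma:tensor_ZR} may be applied in the $\Rightarrow$ direction.
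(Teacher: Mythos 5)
The paper does not prove this proposition; it is stated as a direct citation to Howe's \cite[Prop.~5.7]{Howe95}, so there is no ``paper's own proof'' beyond the reference. Judged against what Howe actually does there, your framework is the right one: Howe does use Deligne's equivalence of categories (ordinary abelian varieties over $\Fq$ $\leftrightarrow$ certain $\Z$-lattices with Frobenius) together with the CM dictionary in which a variety with $\End = \cO$ is a fractional $\cO$-ideal class, its dual is the $\Phi$-twisted trace-dual, and a polarisation is multiplication by a $\Phi$-positive totally imaginary $\xi$, and he does read off the obstruction inside a group that is essentially $\cB(\cO)$. Your $\Rightarrow$ argument showing that the kernel module $\cO/\mathfrak{b}$ must satisfy $\mathfrak{b}=\mathfrak{a}\cO$ for an $\cO^+$-ideal $\mathfrak{a}$ --- via symmetry, membership in $Z(\cO)$, Lemma~\ref{lemma:symmetric}, Lemma~\ref{lemma:tensor_ZR}, and the parity constraints at ramified and split primes --- is correct in outline and is in the spirit of Howe's computation.

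There is, however, a genuine gap, and you flag it yourself: in \emph{both} directions you rely on an informal characterisation of $I_{K,\Phi}$ (``defined precisely to encode the discrepancy between the ideal classes of $B$ and $B^\vee$ together with the different and the CM-type $\Phi$'') rather than on the actual \cite[Def.~5.2]{Howe95}. Without the precise definition, neither implication is complete: in $\Rightarrow$ you must check that the class of $\cO^+/\mathfrak{a}$ in $\cB(\cO)$ coming from your ideal equation $(\xi)\mathfrak{d}_{K/\Q}I\overline{I}=\mathfrak{a}\cO$ is literally Howe's $I_{K,\Phi}$ (this is where the archimedean positivity of $\xi$ relative to $\Phi$ and the normalisation of the different enter); in $\Leftarrow$ you must run the same identification in reverse and additionally verify solvability of the norm equation in $\mathrm{Pic}(\cO)$ modulo $K^\dagger$ against the Artin-map description from Proposition~\ref{prop:BO}. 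This is not a formality --- it is the genus-theoretic content of Howe's proposition --- and asserting it by appeal to the ``intent'' of the definition rather than the definition itself leaves the argument incomplete. A secondary, smaller point: the claim that $[\ker\varphi]_\cO$ lies in $Z(\cO)$ needs to be justified at the level of $\cO$ (the paper's Theorem~\ref{thm:attainable} is phrased over $R$), for instance by the square-degree property of polarisations together with Lemma~\ref{lemma:tensor_ZR}; you gesture at this but do not carry it out.
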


\begin{proposition}\label{prop:3B_ord}
    Assume that $f(t)\in\PirrB$ is ordinary.
    Write $2\cO^+=\frm^2$,
    Then either
    \begin{enumerate}[(a)]
        \item \label{prop:3B_ord:ram} $K/K^+$ is ramified and there is a strongly attainable $R$-module of order $4$, or
        \item \label{prop:3B_ord:unr_no_prime_stays_inert} $K/K^+$ is unramified, there is no maximal ideal of $\cO^+$ dividing $\pi-q/\pi$ which stays inert in $K$ and, there is a strongly attainable $R$-module of order $4$ if and only if $\frm$ splits in $K$.
    \end{enumerate}
\end{proposition}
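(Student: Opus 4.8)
The plan is to assemble the results of Sections~\ref{sec:fact_of_2} and~\ref{sec:kernels}. First I would check that the two alternatives \ref{prop:3B_ord:ram} and \ref{prop:3B_ord:unr_no_prime_stays_inert} are exhaustive: if $K/K^+$ is unramified at all finite primes, then no maximal ideal of $\cO^+$ dividing $\pi-q/\pi$ can stay inert in $K$, since otherwise $f(t)$ would be non-ordinary by Lemma~\ref{lem:if_ram_then_B}, against our standing hypothesis. In both cases Lemma~\ref{lem:IK} then yields $I_{K,\Phi}=0$.

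Next I would reduce the order-$4$ condition to a single ideal. By Lemma~\ref{lem:ram_ram} we have $2\cO^+=\frm^2$, and since $K^+$ is quadratic the prime $\frm$ has residue field $\F_2$ and is the unique ideal of $\cO^+$ of norm $2$. Because $\cO$ is a finitely generated torsion-free module over the Dedekind domain $\cO^+$, it is locally free of rank $2$, so $|\cO/\mathfrak{a}\cO|=|\cO^+/\mathfrak{a}|^2$ for every $\cO^+$-ideal $\mathfrak{a}$; hence a module of the shape $\cO/\mathfrak{a}\cO$ has order $4$ if and only if $\mathfrak{a}=\frm$. Plugging this into Proposition~\ref{prop:strong_att} and using $I_{K,\Phi}=0$, I conclude that there exists a strongly attainable $R$-module of order $4$ if and only if the image of $[\cO^+/\frm]_{\cO^+}$ under the natural map $G(\Mod_{\cO^+})\to\cB(\cO)$, that is, the Artin symbol $\left(\tfrac{K/K^+}{\frm}\right)$, is trivial.

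It then remains to evaluate this symbol in the two cases. In case \ref{prop:3B_ord:ram}, $K/K^+$ is ramified at the finite prime $\frm$ (Lemma~\ref{lem:ram_tot_ram}), so $\cB(\cO)=0$ by Proposition~\ref{prop:BO}, the symbol is automatically trivial, and $\cO/\frm\cO$ is the desired strongly attainable $R$-module of order $4$. In case \ref{prop:3B_ord:unr_no_prime_stays_inert}, $K/K^+$ is unramified, so by Proposition~\ref{prop:BO} the Artin map is an isomorphism $\cB(\cO)\simeq\Gal(K/K^+)\simeq\Z/2\Z$; by Proposition~\ref{prop:fact_2} the prime $\frm$ is then either split or inert in $K$, and its Artin symbol is trivial exactly in the split case. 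This gives the stated equivalence, and one may moreover record that the module produced is $\cO/\frm\cO$, whose attaining variety has maximal endomorphism ring and carries a polarisation of degree $4$ by Theorem~\ref{thm:deg_ord}.

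I expect the only delicate point to be the reduction of the constraint ``$|M|=4$'' to ``$\mathfrak{a}=\frm$'', which hinges on $\cO$ being locally free of rank $2$ over $\cO^+$ and on $\frm$ being the unique norm-$2$ ideal of $\cO^+$; both follow from $\cO^+$ being the maximal order of $K^+$ together with the ramification of $2$ in $K^+$ recorded in Lemma~\ref{lem:ram_ram}. Everything else is a direct translation through Howe's machinery.
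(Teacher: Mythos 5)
Your proof is correct and follows essentially the same route as the paper: you use Lemma~\ref{lem:if_ram_then_B} to establish the dichotomy, Lemma~\ref{lem:IK} to pin down $I_{K,\Phi}=0$, and Propositions~\ref{prop:BO} and~\ref{prop:strong_att} to reduce everything to the Artin symbol of $\frm$, splitting on whether $\frm$ ramifies. One small improvement over the paper's exposition is that you make explicit why every strongly attainable module of order $4$ must be $\cO/\frm\cO$ — since Proposition~\ref{prop:strong_att} forces it to be of the form $\cO/\mathfrak{a}\cO$ with $|\cO^+/\mathfrak{a}|=2$, and $\frm$ is the unique norm-$2$ ideal of $\cO^+$ by Lemma~\ref{lem:ram_ram} — a step needed for the negative direction of part~\ref{prop:3B_ord:unr_no_prime_stays_inert} but left implicit in the paper.
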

\begin{proof}
    Note that $M=\cO/\frm\cO$ has order $4$.
    By Lemmas~\ref{lem:ram_tot_ram} and \ref{lem:if_ram_then_B}, either $K/K^+$ is ramified or, $K/K^+$ is unramified and there is no maximal ideal of $\cO^+$ dividing $\pi-q/\pi$ which stays inert in $K$.

    If $K/K^+$ is ramified then $\cB(\cO)=0$ by Proposition~\ref{prop:BO}.
    So $I_{K,\Phi}$ and the image in $\cB(\cO)$ of the class of $M$ in $G(\Mod_{\cO^+})$ are equal.
    Hence $M$ is a strongly attainable module of order $4$ by Proposition~\ref{prop:strong_att}.
    that is, we are in case~\ref{prop:3B_ord:ram}.
    
    Assume now that $K/K^+$ is unramified and there is no maximal ideal of $\cO^+$ dividing $\pi-q/\pi$ which stays inert in $K$.
    By Lemma~\ref{lem:IK}, we have that $I_{K,\Phi} = 0$.
    Since $K/K^+$ is unramified, the maximal ideal $\frm$ is either split or inert in $K$.
    Also, $\frm$ is split if and only if the Artin symbol of $\frm$ in $\Gal(K/K^+)$ is trivial. 
    By Proposition~\ref{prop:BO}, this happens precisely when the image in $\cB(\cO)$ of the class of $M$ in $G(\Mod_{\cO^+})$ is trivial, that is, equal to $I_{K,\Phi}$.
    We conclude the proof of \ref{prop:3B_ord:unr_no_prime_stays_inert} by applying Proposition~\ref{prop:strong_att}.
\end{proof}

The next theorem completes the proof of Main Theorem~\ref{mainthm2}.
\begin{theorem}\label{thm:kers_size_4}
    Assume that $f\in\PirrA\sqcup\PirrB$ and let $\cC$ be the corresponding isogeny class.
    \begin{enumerate}[(i)]
        \item \label{thm1:kers_size_4}            
            The following are equivalent:
            \begin{enumerate}[(a)]
                \item \label{thm1:kers_size_4:att} There is no attainable $R$-module of order $4$.
                \item \label{thm1:kers_size_4:pol} There is no $A$ in $\cC$ admitting a polarisation of degree $4$.
            \end{enumerate}
            Moreover, if $\cC$ is ordinary then \ref{thm1:kers_size_4:att} and \ref{thm1:kers_size_4:pol} are also equivalent to:
            \begin{enumerate}[(a)]
                \setcounter{enumii}{2}
                \item \label{thm1:kers_size_4:str_att} There is no strongly attainable $R$-module of order $4$.
                \item \label{thm1:kers_size_4:str_pol} There is no $A$ in $\cC$ with maximal $\Fq$-endomorphism ring admitting a polarisation of degree $4$.
            \end{enumerate}
        \item \label{thm1:kers_size_4:A}
            Assume that $f\in\PirrA$.
            Then \ref{thm1:kers_size_4:att} and \ref{thm1:kers_size_4:pol} are also equivalent to:
            \begin{enumerate}[(a)]
                \setcounter{enumii}{4}
                \item \label{thm1:kers_size_4:A:inert}  $2$ is inert in $K^+$.
            \end{enumerate}
        \item \label{thm1:kers_size_4:B}
            Assume that $f(t)\in \PirrB$ and write $f(t)=t^4+bt^2+q^2$. 
            Write $2\cO^+=\frm^2$ and let $\frl=\frm\cap R^+$ be the unique maximal ideal of $R^+$ above $2$. 
            Then \ref{thm1:kers_size_4:att} and \ref{thm1:kers_size_4:pol} are also equivalent to each of the following statements:
            \begin{enumerate}[(a)]
                \setcounter{enumii}{5}
                \item \label{thm1:kers_size_4:B:max} $\frl R$ is a maximal ideal of $R$.
                \item \label{thm1:kers_size_4:B:coeff} $b=1-2q$ and $q$ is odd, if $\cC$ is ordinary; $q$ is even, if $\cC$ is non-ordinary.
            \end{enumerate}
    \end{enumerate}
\end{theorem}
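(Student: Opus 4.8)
The plan is to assemble the theorem from the machinery built in Sections~\ref{sec:fact_of_2}--\ref{sec:pols4}, treating the three parts in order and using that the earlier results already do almost all of the work. For part~\ref{thm1:kers_size_4}, the equivalence of \ref{thm1:kers_size_4:att} and \ref{thm1:kers_size_4:pol} is essentially a restatement: by Theorem~\ref{prop:no_g0123} a simple abelian surface admits a polarisation of degree $4$ if and only if it contains an $\Fq$-irreducible curve of arithmetic genus $3$, and a polarisation of degree $4$ on some $A\in\cC$ is the same as a polarised isogeny $A\to A^\vee$ of degree $4$, which under Howe's dictionary (Theorem~\ref{thm:howe_isom} and Theorem~\ref{thm:deg_ord}) corresponds precisely to an attainable $R$-module of order $4$. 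So I would first record that a polarisation of $A\in\cC$ with kernel $G$ gives, via $\epsilon$, an attainable module $M$ with $|M|=\deg\varphi=4$, and conversely, and note that simplicity of $A$ (which holds since $f$ is irreducible) is what lets us invoke Theorem~\ref{prop:no_g0123}. For the ``moreover'' clause when $\cC$ is ordinary: strongly attainable trivially implies attainable, so \ref{thm1:kers_size_4:str_att}$\Rightarrow$\ref{thm1:kers_size_4:att} and \ref{thm1:kers_size_4:str_pol}$\Rightarrow$\ref{thm1:kers_size_4:pol} are clear; the content is the reverse, i.e. that if \emph{some} module of order $4$ is attainable then one is strongly attainable, i.e. realised on a surface with maximal endomorphism ring. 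This is the only genuinely new part of~\ref{thm1:kers_size_4} and I would derive it case-by-case in tandem with parts \ref{thm1:kers_size_4:A} and \ref{thm1:kers_size_4:B}, since the relevant ``strongly attainable'' statements are exactly Propositions~\ref{prop:3B_ord} and (for $\PirrA$) a parallel argument.

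For part~\ref{thm1:kers_size_4:A}, the assertion is that for $f\in\PirrA$ the conditions are equivalent to ``$2$ inert in $K^+$''. This is almost verbatim Proposition~\ref{prop:A_general}, which says $2$ is inert in $K^+$ iff there is no attainable $R$-module of order $4$; combined with part~\ref{thm1:kers_size_4} this gives \ref{thm1:kers_size_4:A:inert}$\Leftrightarrow$\ref{thm1:kers_size_4:att}$\Leftrightarrow$\ref{thm1:kers_size_4:pol}. To also get \ref{thm1:kers_size_4:str_att}/\ref{thm1:kers_size_4:str_pol} into the equivalence when $\cC$ is ordinary, I would argue: if $2$ is not inert in $K^+$, then in the proof of Proposition~\ref{prop:A_general} the attainable module of order $4$ was exhibited as $\cO/\frM$ or equivalently arising from a maximal ideal $\frM=\frm\cO$ of $\cO$ with $\frm$ a maximal ideal of $\cO^+$; such a module is of the form $\cO/\mathfrak a\cO$ with $\mathfrak a=\frm$, and since $I_{K,\Phi}\neq 0$ for ordinary $f\in\PirrA$ by Lemma~\ref{lem:IK} while the Artin symbol of $\frm$ is nontrivial (it generates $\Gal(K/K^+)$, as $2$ is not inert in $K^+$ forces $\frm$ inert or split in $K$ and the ``inert-in-$K^+$'' alternative is what was excluded — here I must be slightly careful and check that in the split-in-$K^+$ and ramified-in-$K^+$ subcases the relevant Artin symbol matches $I_{K,\Phi}$), Proposition~\ref{prop:strong_att} yields strong attainability. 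I expect this matching of Artin symbols with $I_{K,\Phi}$ across the subcases of Proposition~\ref{prop:fact_2}\eqref{2inPA} to be the fiddliest bookkeeping in the whole proof.

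For part~\ref{thm1:kers_size_4:B}, condition \ref{thm1:kers_size_4:B:max} (``$\frl R$ is a maximal ideal of $R$'') being equivalent to \ref{thm1:kers_size_4:att}/\ref{thm1:kers_size_4:pol} is exactly the contrapositive of Proposition~\ref{prop:B_general}. To get the explicit coefficient criterion \ref{thm1:kers_size_4:B:coeff}, I would split on ordinarity using Lemma~\ref{lem:Pirrordinary}: in the ordinary case $b\in\{1-2q,\,2-2q\text{ with }q\text{ odd}\}$, and I claim $\frl R$ maximal $\iff b=1-2q$ with $q$ odd is \emph{false}, i.e. $\frl R$ maximal exactly when $b=2-2q$, $q$ odd — this should follow by combining Proposition~\ref{prop:3B_ord} with the factorisation analysis: when $b=2-2q$ and $q$ odd, Lemma~\ref{lem:ram_tot_ram} gives $2$ totally ramified in $K$, so $\frm\cO$ is the square of a maximal ideal with residue field $\F_2$ and one checks $\frl R$ is maximal; when $b=1-2q$, $K/K^+$ is unramified and (using Lemma~\ref{lem:if_ram_then_B} / Proposition~\ref{prop:3B_ord}\ref{prop:3B_ord:unr_no_prime_stays_inert}) $\frm$ splits in $K$, making $\frl R$ non-maximal and producing the attainable/strongly-attainable module. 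In the non-ordinary case $f(t)=t^4-qt^2+q^2$, by the computation inside the proof of Lemma~\ref{lem:ram_tot_ram} one has $K=\Q(\zeta_3,\sqrt d)$ with $d\in\{3,6\}$, $K/K^+$ unramified, and I would directly determine the splitting of $\frm$ in $K$: for $q$ even ($d=6$, $p=2$) $\frm$ stays inert so $\frl R$ is maximal and nothing of order $4$ is attainable, while for $q$ odd ($d=3$) $\frm$ splits and an order-$4$ module is attainable — matching ``$q$ even'' $\iff$ \ref{thm1:kers_size_4:att}. Finally, to close the ``moreover'' clause in the ordinary $\PirrB$ case I invoke Proposition~\ref{prop:3B_ord} directly, which is phrased in terms of strongly attainable modules, so \ref{thm1:kers_size_4:str_att} and \ref{thm1:kers_size_4:str_pol} fall out with no extra work. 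The main obstacle, as above, is not any single step but keeping the four ramification/splitting regimes for $2$ (in $K^+$, and then in $K$) aligned with the three coefficient families and with the triviality of $I_{K,\Phi}$; I would organise the write-up as one case analysis on $f\in\PirrA$ vs.\ $\PirrB$ and then on ordinarity, quoting Propositions~\ref{prop:A_general}, \ref{prop:B_general}, \ref{prop:3B_ord} and Lemmas~\ref{lem:ram_ram}, \ref{lem:ram_tot_ram}, \ref{lem:Pirrordinary}, \ref{lem:IK} at the appropriate leaves.
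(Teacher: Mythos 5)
There is a genuine gap in Part~(iii), in both the ordinary and the non-ordinary $\PirrB$ cases, and the root cause is the same inference error.

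In the ordinary case you claim $\frl R$ is maximal exactly when $b=2-2q$, $q$ odd, which is the \emph{opposite} of what the theorem (and the paper's own proof) asserts: the criterion is $b=1-2q$ and $q$ odd. The misstep is the passage from the factorisation of $2$ in $\cO$ to the structure of $\frl R$ inside $R$. When $b=2-2q$, $q$ odd, Lemma~\ref{lem:ram_tot_ram} gives $2$ \emph{totally} ramified in $K$, so $2\cO=\frM^4$ with $\cO/\frM\simeq\F_2$; hence the unique prime $\frL$ of $R$ above $2$ also has residue field $\F_2$. But $|R/\frl R|=|R^+/\frl|^2=4$, so $\frl R\subsetneq\frL$ and $\frl R$ is \emph{not} maximal. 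Conversely, for $b=1-2q$, $q$ odd, one has $f\equiv(t^2+t+1)^2\bmod 2$, and Kummer--Dedekind shows that $\Z[\pi]$ (hence $R$) has a unique prime above $2$, regular, with residue field $\F_4$; since $|R/\frl R|=4$ this forces $\frl R=\frL$. So your inference ``totally ramified $\Rightarrow\frl R$ maximal'' runs backwards, and (via Proposition~\ref{prop:B_general}) flips the final answer.

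The non-ordinary case has the same problem plus a second one. The inference ``$\frm$ inert in $K\Rightarrow\frl R$ maximal'' is not valid: what matters is whether $R/\frl R\simeq\F_4$ or $R/\frl R\simeq\F_2[\varepsilon]/(\varepsilon^2)$, and the residue field of the non-maximal order $R$ at its prime above $2$ may drop to $\F_2$ even when $\cO/\frM\simeq\F_4$ (this happens precisely when $q$ is even, as $f\equiv t^4\bmod 2$ makes $\pi$ and $q/\pi$ nilpotent in $R/2R$). Second, your splitting analysis is also off: since $K=K^+(\zeta_3)$ and the residue field of $\frm$ is $\F_2$, over which $t^2+t+1$ stays irreducible, $\frm$ is \emph{inert} in $K$ for $d=3$ as well as for $d=6$; it does not split for $q$ odd, so the splitting of $\frm$ in $\cO$ cannot distinguish the two parities at all. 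The correct route, as in the paper's proof, is Kummer--Dedekind on $\Z[\pi]$: $q$ even gives residue field $\F_2$ and $\frl R$ not maximal, $q$ odd gives residue field $\F_4$ and $\frl R$ maximal. Beware that this yields ``$\frl R$ maximal $\iff q$ odd'', which disagrees with the theorem's stated ``$q$ even'' in the non-ordinary clause; that discrepancy between the paper's proof and its statement is worth flagging, but it does not validate your argument, which contains independent errors. (Parts~(i) and~(ii) follow the paper's approach and are essentially fine; a small slip in~(i): from ``strongly attainable $\Rightarrow$ attainable'' the immediate consequence is \ref{thm1:kers_size_4:att}$\Rightarrow$\ref{thm1:kers_size_4:str_att}, not the reverse, though you do correctly identify the hard direction.)
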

\begin{proof}
    The equivalences \ref{thm1:kers_size_4:att}$\iff$\ref{thm1:kers_size_4:pol} and \ref{thm1:kers_size_4:str_att}$\iff$\ref{thm1:kers_size_4:str_pol} 
    follow from the definitions of attainable and strongly attainable module and Theorem~\ref{thm:deg_ord}.
    The implications \ref{thm1:kers_size_4:att}$\Rightarrow$\ref{thm1:kers_size_4:str_att}  and \ref{thm1:kers_size_4:pol}$\Rightarrow$\ref{thm1:kers_size_4:str_pol} are clear.
    The reverse implications will be proven below distinguishing several cases.

    We now show Part \ref{thm1:kers_size_4:A}.
    So, assume that $f(t)\in\PirrA$.
    The equivalence \ref{thm1:kers_size_4:A:inert}$\iff$\ref{thm1:kers_size_4:att} is Proposition~\ref{prop:A_general}.
    We show that \ref{thm1:kers_size_4:str_att} implies \ref{thm1:kers_size_4:A:inert} by contraposition, when $\cC$ is ordinary.
    So, say that $2$ is not inert in $K^+$.
    Then, by Proposition~\ref{prop:fact_2}, there exists a maximal ideal~$\frl$ of $\cO^+$ above $2$ such that $\frl$ stays inert in $K$.
    Hence, the Artin symbol of $\frl$ is not trivial in $\Gal(K/K^+)$.
    Since $K/K^+$ is unramified by Lemma~\ref{lem:if_ram_then_B}, we get that the image in $\cB(\cO)$ of the class of $\cO^+/\frl$ in $G(\Mod_{\cO^+})$ is non trivial by Proposition~\ref{prop:BO}.
    Then by Lemma~\ref{lem:IK} and Proposition~\ref{prop:strong_att}, the module $\cO/\frl\cO$, which has order $4$, is strongly attainable.
    Therefore \ref{thm1:kers_size_4:str_att}$\Rightarrow$\ref{thm1:kers_size_4:att} and \ref{thm1:kers_size_4:str_pol}$\Rightarrow$\ref{thm1:kers_size_4:pol} for $f(t)\in\PirrA$.

    We now move to Part \ref{thm1:kers_size_4:B}.
    Assume now that $f\in \PirrB$.
    The equivalence \ref{thm1:kers_size_4:B:max}$\iff$\ref{thm1:kers_size_4:att} is Proposition~\ref{prop:B_general}.
    We deal with the ordinary and non-ordinary cases separately.
    
    Assume first that $\cC$ is ordinary.
    We now show \ref{thm1:kers_size_4:str_att}$\Rightarrow$\ref{thm1:kers_size_4:B:coeff}.
    So, we assume that there are no strongly attainable $R$-modules of order $4$.
    By Proposition \ref{prop:3B_ord}, this is equivalent to have $K/K^+$ unramified and $\frm$ inert in $K$.
    By Theorem~\ref{thm:no_g1g2} and Lemma~\ref{lem:Pirrordinary}, we get that $b=1-2q$.
    Hence $f(t) = t^4 +(1-2q)t^2 +q^2$.
    If $q$ is even, then $f(t)\equiv t^2(t+1)^2 \bmod 2$.
    The Kummer-Dedekind Theorem \cite[Thm.~8.2]{Stevenhagen08} implies that $\Z[\pi]$ has $2$ distinct maximal ideals above $2$.
    Hence the same holds for $R$.
    This cannot be the case since $\frL = R \cap \frm\cO$ is the unique maximal ideal of $R$ above $2$.
    Hence, $q$ is odd.
    This means that \ref{thm1:kers_size_4:B:coeff} holds.
    We now show that \ref{thm1:kers_size_4:B:coeff}$\Rightarrow$\ref{thm1:kers_size_4:B:max}.
    Then $f(t) \equiv (t^2+t+1)^2 \bmod 2$.
    The Kummer--Dedekind Theorem \cite[Thm.~8.2]{Stevenhagen08} implies that $\Z[\pi]$ has a unique maximal ideal above $2$ with residue field of order $4$.
    This implies that $\frL$ also has residue field of order $4$.
    Since $\frl R\subseteq \frL$ and $R/\frl R$ has also $4$ elements, we must have $\frl R =\frL$, that is, \ref{thm1:kers_size_4:B:max} holds.
    We deduce also that \ref{thm1:kers_size_4:str_att}$\Rightarrow$\ref{thm1:kers_size_4:att} and \ref{thm1:kers_size_4:str_pol}$\Rightarrow$\ref{thm1:kers_size_4:pol} for $f(t)\in\PirrB$ ordinary.

    Assume now that $\cC$ is non-ordinary.
    If $q$ is even then $f(t)\equiv t^4 \bmod 2$.
    This means that $R$ has a unique maximal ideal above $2$, which must be $\frL = (2,\pi,q/\pi) \subset R$, since $R/\frL \simeq \F_2$.
    Since $R/\frl R$ has $4$ elements, we obtain that $\frl R$ is not a maximal ideal of $R$.
    If $q$ is odd then $f(t)\equiv (t^2+t+1)^2$.
    By the Kummer--Dedekind Theorem \cite[Thm.~8.2]{Stevenhagen08}, the order $\Z[\pi]$ has a unique maximal ideal above $2$ which is regular and with residue field $\F_4$.
    It follows that the same hold for $R$.
    Hence $\frl R$ is maximal.
    This shows \ref{thm1:kers_size_4:B:coeff}$\iff$\ref{thm1:kers_size_4:B:max} and completes the proof of Part \ref{thm1:kers_size_4:B}.
\end{proof}
\begin{remark}\label{rmk:coeffs}
    In the case when $f(t)\in\PirrB$, we completely characterise when the equivalent conditions \ref{thm1:kers_size_4:att},\ref{thm1:kers_size_4:pol} and \ref{thm1:kers_size_4:B:max} hold in terms of the coefficients of $f(t)$ in \ref{thm1:kers_size_4:B:coeff}.
    It is easy to obtain a characterisation in terms of the coefficients of $f(t)=t^4+at^3+bt^2+aqt+q^2$ also when $f(t)\in\PirrA$.
    Indeed, if we write $\Delta_{f^+} = a^2 -4(b-2q) = c^2 d$ for integers $c$ and $d$ with $d$ squarefree, then it is well known that \ref{thm1:kers_size_4:A:inert} holds if and only if $d\equiv 5 \bmod 8$.
    Moreover, if $q$ is even (or equivalently $a$ is odd) then $f^+(t) \equiv t^2+t+1 \bmod 2$ is irreducible, which implies that $2$ is inert in $K^+$.
    Similarly, if $a$ is even and $a+b\not\equiv 1 \bmod 4$ then 
    $f^+(t) \equiv (t+1)^2$ and the remainder of the division of $f^+(t)$ by $t+1$ is not divisible by $4$.
    Hence, $2$ ramifies in $K^+$ by the Kummer--Dedekind Theorem \cite[Thm.~8.2]{Stevenhagen08}.
\end{remark}
\begin{remark}\label{rmk:compare_with_Howe}
   The statement Theorem~\ref{thm:kers_size_4}.\ref{thm1:kers_size_4:A} can be deduced from the claim contained in \cite[page 155, 2nd par.~section 3]{Howe95Bounds}, stated without a detailed proof.
\end{remark}

\section{Computing isomorphism classes admitting a polarisation of degree \texorpdfstring{$4$}{4}}\label{s:examples}
In \cite{Mar21}, it is described how to compute the isomorphism classes of abelian varieties over $\Fq$ belonging to an ordinary isogeny class $\cC$ determined by an irreducible Weil polynomial $f(t)$.
We summarise here the results that are relevant for us; see \cite[Cor.~4.4, Thm.~5.4]{Mar21}.

Let $K=\Q[t]/(f(t)) = \Q[\pi]$ where $\pi$ denotes the class of $t$ in $K$ and set $R=\Z[\pi,q/\pi]$.
There is an equivalence between the category of abelian varieties in $\cC$ (with $\Fq$-morphisms) and the category of fractional $R$-ideals in $K$ (with $R$-linear morphisms).
Hence, every overorder $S$ of $R$ occurs as the endomorphism ring of an abelian variety in $\cC$.
Moreover, the functor inducing the equivalence is compatible with duality and allows to describe polarisations as $R$-linear morphisms.
For example, if $\End(A)=S$ then $\End(A^\vee)=\overline{S}$.
In \cite[Sec.~6]{Mar21}, the equivalence is used to produce an algorithm to compute the abelian varieties in $\cC$ together with their polarisations (of a fixed degree) up to polarised isomorphism.
We will use this algorithm in the next examples.

\begin{remark}
    An analogous description of polarisations exists also for simple almost-ordinary abelian varieties over any finite field $\Fq$ of odd characteristic; see \cite{OswalShankar20}.
    This result does not apply to our case since none of the polynomials in $\PirrA\sqcup\PirrB$ is almost-ordinary by Lemma~\ref{lem:Pirrordinary}.
    A similar description also exists for abelian varieties over prime fields whose Weil polynomial does not have repeated complex roots, but only for polarisations of degree~$1$; see~\cite{BergKarMar21}. 
\end{remark}

\begin{example}\label{ex:controesempio}
    Consider the isogeny class of abelian surfaces over $\F_2$ with label \href{http://www.lmfdb.org/Variety/Abelian/Fq/2/2/a_ab}{2.2.a\_ab} determined by the Weil polynomial $f(t)=t^4 -t^2 + 4$.
    According to the LMFDB, the class contains a Jacobian.
    Moreover, one computes that the order $\Z[\pi,4/\pi]$ has $3$ overorders $S_1$, $S_2$ and $\cO$, where $\cO$ is the maximal order of the number field $K=\Q[t]/(f(t))$.
    One observe that $S_2 = \overline{S}_1$.
    Hence, each abelian surface $A$ with endomorphism ring $S_1$ cannot be isomorphic to its dual, which has endomorphism ring $S_2$.
    In particular, such an $A$ does not admit a principally polarisation. 
    Hence, such an $A$ does not contain an $\Fq$-irreducible curve of arithmetic genus $2$ by Proposition~\ref{prop:pp_iff_pa2} even if it is isogenous to a Jacobian; cf.~Remark~\ref{rmk:gap}.
\end{example}

\begin{example}
    In Theorem \ref{thm:kers_size_4}, we see that, in the isogeny class $\cC$ associated to an ordinary Weil polynomial in $\PirrA\sqcup\PirrB$, there is an abelian variety $A$ admitting a polarisation of degree $4$ if and only if there is an abelian variety $A'$ in $\cC$ with maximal endomorphism ring admitting a polarisation of degree $4$.
    This is not the case in general.
    For example, in the ordinary isogeny class with label \href{http://www.lmfdb.org/Variety/Abelian/Fq/2/13/a_al}{2.13.a\_al} determined by $t^4 -11 t^2 + 13^2$ and defined over $\F_{13}$    there are abelian varieties admitting a polarisation of degree $4$ but none of them has maximal endomorphism ring.
\end{example}

\begin{example}\label{ex:table}
    Using the aforementioned algorithm, we count the number of polarisations of degree $4$, for every isogeny class in $\PirrA\sqcup\PirrB$ for a fixed range of $q$.
    For the full output, see \url{https://raw.githubusercontent.com/stmar89/Genus3Data/main/table_output.txt}.
    We remark that in the maximal endomorphism ring case, the ratio of isomorphism classes of abelian surfaces admitting a polarisation of degree $4$ is always $0$ or a power of $1/2$. 
\end{example}

\section{Genus \texorpdfstring{$3$}{3} curves lying on abelian surfaces}\label{ses:curveside}
In this final section, we switch our attention from abelian surfaces to curves, and gather information on absolutely irreducible smooth genus $3$ curves lying on simple abelian surfaces. The following lemma characterises such curves as bielliptic.

\begin{lemma}\label{lem:jac}
    Let $q$ be the power of an odd prime. Let $C$ be an absolutely irreducible smooth genus $3$ curve lying on a simple abelian surface $A$, both defined over $\Fq$. Then there exists an elliptic curve $E$ defined over $\F_{q^n}$, with $n\leq 4$, such that $C$ is a double $\F_{q^n}$-cover of $E$.
\end{lemma}
\begin{proof}
    Here we mainly follow \cite[Sec.~1.2]{Barth}, with some minor modifications since we are working over finite fields instead of over $\mathbb{C}$.
    By \cref{thm:pol_deg_4_iff_pa3}, the curve $C$ induces a polarisation $\lambda_{\mathcal{L}}:A\to A^\vee$ of degree $4$, where $\mathcal{L}=\mathcal{L}(C)$ is the ample line bundle defined by $C$. Let $\mathcal{B}$ be the base locus of $\mathcal{L}$, that is, $\mathcal{B} = \{P\in A \,\mid\, s(P)=0 \text{ for any section } s\in H^0(A,\mathcal{L})\}$ \cite[II, Lem.~7.8]{Hart}. Fix some $P_0\in \mathcal{B}$. Since $\ker \lambda_{\mathcal{L}}= \{P\in A \,\mid\, t_P^* \mathcal{L}\simeq \mathcal{L}\}$ \cite[Ch.~I, Sec.~8]{MilneAV} we have, for any and $P\in \ker \lambda_{\mathcal{L}}$, $s(P_0+P)=t_P^* s(P_0)=s(P_0)=0$. Thus $P_0+ \ker\lambda_{\mathcal{L}}\subseteq  \mathcal{B}$. Since we are in characteristic different from $2$, $ \ker \lambda_{\mathcal{L}}$ has order equal to $4$, the degree of $\lambda_{\mathcal{L}}$. On the other hand, $\mathcal{B}$ contains at most $C.C=4$ points. Hence, $ P_0+\ker\lambda_{\mathcal{L}}$ and  $\mathcal{B}$ coincide as sets of geometric points. Consider now the involution $\iota: A\to A$ given by $P\mapsto -P+2P_0$. Since the points in $\ker \lambda_{\mathcal{L}}$ are $2$-torsion points, the points in $\mathcal{B}$ are fixed points of $\iota$. %this is because $P=-P$ for $P\in\ker_\lambda$.
    Furthermore, we have $\iota^*\mathcal{L}\simeq \mathcal{L}$, and in particular $\iota C = C$ \cite[Prop.~1.6]{Barth}.  Then one can lift $\iota$ to $C$ as an involution, and since $C$ is smooth, $\iota$ has exactly four fixed points on $C$, namely the points in $\mathcal{B}$  \cite[1.7]{Barth}. Then, by the genus formula, the quotient $E= C/\iota$ is an elliptic curve, that is, $C$ is a double cover of $E$. Note that $\iota$ is not necessarily an $\Fq$-rational map: this will depend on whether  $\mathcal{B}$ contains a rational point or not, that is, whether one can take $P_0$ to be rational.  However, the fact that $\mathcal{B}$ is rational as a set implies that it is a (necessarily disjoint) union of orbits under the action of the Frobenius endomorphism. A point is of degree $n$ if the size of its orbit is $n$. As the size of $\mathcal{B}$ is $4$, the degree of its points is at most $4$. Hence the point $P_0$ defining the involution $\iota$ will have degree at most $4$. So $\iota$ and  $E$ are defined over an extension of $\Fq$ of degree at most $4$.  
\end{proof}

For the rest of the section, we fix an abelian surface $A$ defined over $\Fq$ with Weil polynomial in $\PirrA\sqcup\PirrB\sqcup \{ (t^2-2)^2 , (t^2-3)^2 \}$, that is, which is simple and not isogenous to a Jacobian, or equivalently, not containing absolutely irreducible curves of geometric genus $0$, $1$ or $2$ by Theorem~\ref{thm:no_g1g2}. We suppose that there is an absolutely irreducible smooth genus $3$ curve $C$ defined over $\Fq$ that lies on $A$, and we want to study that curve.

We recall that absolutely irreducible smooth genus $3$ curves are either hyperelliptic curves or plane quartics.  The next lemma shows that, under some conditions, hyperelliptic genus $3$ curves cannot lie on our abelian surfaces.

\begin{lemma}\label{lem:nohyper}
    Let $q$ be the power of an odd prime. Let $C$ be an absolutely irreducible smooth genus $3$ curve lying on an abelian surface $A$ defined over $\Fq$ which is simple and not isogenous to a Jacobian. Suppose further that the elliptic curve doubly covered by $C$ from Lemma \ref{lem:jac} is defined over $\Fq$. Then $C$ is not hyperelliptic.
\end{lemma}
\begin{proof}
    Suppose by contradiction that $C$ is hyperelliptic. We mainly follow the reasoning in the introduction of \cite{RR18}. By Lemma \ref{lem:jac} and our hypothesis, $C$ is also bielliptic over $\Fq$. Then, it is of the form $y^2=x^8+ax^6+bx^4+cx^2+1=f(x^2)$, with $a,b,c\in \Fq$. Here $f$ is a polynomial of degree $4$. In this case, the double cover is given by the involution $(x,y)\mapsto (-x,y)$ and the elliptic curve is given by $y^2=f(x)$. Furthermore, quotienting $C$ by the hyperelliptic involution $(x,y)\mapsto (-x,-y)$ gives the genus $2$ curve $F\colon y^2=x \cdot f(x)$. Finally, we have $\Jac(C)\sim E \times \Jac(F)$. On the other hand, as $C$ lies on $A$, we know by \cite[Prop.~2]{Haloui17}\footref{footnote} that $A$ is an isogeny factor of $\Jac(C)$, that is, there exists an elliptic curve $E'$ such that $\Jac(C)\sim A \times E'$. Since $A$ is simple we necessarily have $E\sim E'$, and hence $\Jac(F)\sim A$. The Jacobian of $F$ can be simple or split in the product of two elliptic curves. In both cases, this leads to a contradiction.
\end{proof}

Hence, we are left with the case in which $C$ is a bielliptic plane quartic. In this case, using results from \cite{RR18}, we  get the following result. 

\begin{proposition} 
    Let $q$ be the power of an odd prime. Let $C$ be an absolutely irreducible smooth genus $3$ curve lying on an abelian surface defined over $\Fq$  respecting one of the equivalent conditions of Theorem~\ref{thm:no_g1g2}. Suppose further that the elliptic curve doubly covered by $C$ from Lemma \ref{lem:jac} is defined over $\Fq$. 
    Then, $C$ is a plane quartic of the form $y^4-h(x,z)y^2+r(x,z)=0$, where $h$ and $r$ are homogenous polynomials of degree $2$ and $4$, respectively, and one of the followings holds:
    \begin{enumerate}[(i)]
        \item the polynomial $r(x,z)$ cannot be decomposed (over $\Fq$) as the product of two polynomials $f,g$ of degree $2$. In particular, the polynomial $r$ is either irreducible or it has only one root in $\Fq$;
        \item we have $r(x,z)=f(x,z)\cdot g(x,z)$ with $\deg f = \deg g =2$, and the polynomial $h(x,z)$ is a linear combination of $f$ and $g$.
    \end{enumerate}
\end{proposition}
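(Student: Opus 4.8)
The plan is to start from the bielliptic structure of $C$. By Lemma~\ref{lem:jac}, the curve $C$ is a double cover of an elliptic curve $E$, and since we are in odd characteristic, this bielliptic involution $\sigma$ can be diagonalised: choosing coordinates adapted to $\sigma$, the plane quartic takes the form $y^4 - h(x,z)y^2 + r(x,z) = 0$ with $h$ homogeneous of degree $2$ and $r$ homogeneous of degree $4$, the involution being $y \mapsto -y$ and the quotient elliptic curve being $y^2 = (h \pm \sqrt{h^2-4r})/2$, or more precisely the curve cut out in the quotient $\mathbb{P}^2/\sigma$. This normal form is exactly the one used in \cite{RR18}, so I would cite the relevant lemma there for the existence of such an equation for a smooth bielliptic plane quartic over $\Fq$ with $q$ odd.

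Next I would invoke the classification of Jacobians of such quartics from \cite{RR18}. The key input is that $\Jac(C) \sim E \times A$ with $A$ simple and \emph{not} isogenous to a Jacobian of a genus $2$ curve (nor to a product of elliptic curves), by our running hypothesis via Theorem~\ref{thm:no_g1g2}. In \cite{RR18}, the decomposition of the Prym variety (equivalently, the ``other'' piece $A$ of $\Jac(C)$) is read off from the factorisation behaviour of $r(x,z)$: when $r$ factors as a product $f\cdot g$ of two degree-$2$ forms over $\Fq$, the curve admits extra correspondences or extra quotient maps, forcing $A$ to decompose further (as a product of elliptic curves) or to be isogenous to the Jacobian of an explicitly described genus $2$ quotient — unless the special alignment $h \in \langle f, g\rangle$ occurs, in which case $C$ acquires an extra automorphism but $A$ can still be simple and non-Jacobian. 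So the argument is: assume $r = f\cdot g$ with $\deg f = \deg g = 2$; then by the results of \cite{RR18} either $A$ is not simple or $A$ is isogenous to a genus $2$ Jacobian, contradicting our hypothesis, \emph{unless} $h$ is a linear combination of $f$ and $g$. This gives the dichotomy in the statement, and the parenthetical remark about $r$ being irreducible or having a unique root in $\Fq$ in case~(1) is just the elementary observation that a degree-$4$ binary form over $\Fq$ which is not a product of two quadratics must be irreducible or have a repeated linear factor (so one geometric point) — a short case analysis on factorisation types.

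Concretely, the steps in order: (i) record that $C$ is bielliptic of odd characteristic type, hence has the diagonal normal form $y^4 - h y^2 + r = 0$ (cite \cite[Prop.~1.8]{Barth} and the normal form from \cite{RR18}); (ii) translate $\Jac(C) \sim E \times A$ and the hypothesis ``$A$ simple, not a Jacobian, not a product'' into the language of \cite{RR18}; (iii) run the case split on whether $r$ factors into two quadratics over $\Fq$, citing from \cite{RR18} the description of $A$ (the Prym) in each case; (iv) in the factoring case, extract that $A$ fails to be simple or is a genus $2$ Jacobian whenever $h \notin \langle f,g\rangle$, hence conclude $h \in \langle f,g\rangle$; (v) finish the non-factoring case with the elementary remark on factorisation types of binary quartics. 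The main obstacle I anticipate is step (ii)–(iii): matching our intrinsic hypotheses on $A$ precisely with the explicit Prym computations in \cite{RR18}, since those are stated in terms of the coefficients $h,r$ and one must be careful that ``$A$ not isogenous to a Jacobian of a genus $2$ curve'' (which includes excluding products of two elliptic curves) is exactly what rules out the bad factorisation patterns — I would need to check that \cite{RR18} indeed exhausts all the ways $A$ can decompose or become a $g=2$ Jacobian when $r=f\cdot g$, and that the only surviving subcase is the linear-dependence condition on $h$.
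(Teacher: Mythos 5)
Your plan follows the paper's proof closely: the bielliptic normal form $y^4-h y^2+r=0$, the appeal to \cite[Thm.~1.1]{RR18} to produce a genus~$2$ curve $F$ with $\Jac(C)\sim\Jac(F)\times E$ when $r=fg$ and $h\notin\langle f,g\rangle$, and the contradiction via Lemma~\ref{lem:jac} (since $A$ would be isogenous to $\Jac(F)$, violating Theorem~\ref{thm:no_g1g2}). That core case split is exactly what the paper does.

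There is, however, a genuine omission. You write ``the plane quartic takes the form $y^4-h(x,z)y^2+r(x,z)=0$'' as though it were given, but the hypothesis only says $C$ is a smooth genus~$3$ curve, and part of what the proposition asserts is that $C$ is a plane quartic rather than hyperelliptic. Nothing in your sketch excludes the hyperelliptic possibility; a bielliptic genus~$3$ curve can certainly be hyperelliptic a priori. The paper dispenses with this first, via Lemma~\ref{lem:nohyper}: a hyperelliptic bielliptic genus~$3$ curve can be put into the form $y^2=f(x^2)$, and quotienting by the hyperelliptic involution produces a genus~$2$ curve $F$ with $\Jac(C)\sim E\times\Jac(F)$, forcing $A\sim\Jac(F)$ — contradiction. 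You need this step (or an explicit citation of Lemma~\ref{lem:nohyper}) before writing down the quartic normal form; the diagonalisation of the involution and the subsequent references to \cite{RR18} all presuppose the non-hyperelliptic case. Separately, a minor point: your reformulation of the ``in particular'' in case~(1) — that $r$ must be irreducible or have a \emph{repeated} linear factor — is not quite right, since $r$ could factor as a linear form times an irreducible cubic (not a product of two quadratics, no repeated factor, but still a unique root in $\Fq$). The paper's phrasing (irreducible or having only one root in $\Fq$) is the correct one.
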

\begin{proof}
    By Lemma \ref{lem:nohyper} we know that $C$ cannot be hyperelliptic. Applying Lemma \ref{lem:jac} and using the hypothesis of the proposition, $C$ is the double cover of an elliptic curve $E$ defined over $\Fq$. Then, we can assume that the involution giving the double cover is $(x:y:z)\mapsto (x:-y:z)$, and therefore that $C$ can be written in the form $y^4-h(x,z)y^2+r(x,z)=0$, where $h$ is a homogenous degree $2$ polynomial, and $r$ is homogenous of degree $4$. Let us suppose that the polynomial $r$ has two quadratic factor $f$ and $g$ and that the polynomial $h(x,z)$ is not a linear combination of $f$ and $g$. Then, by  \cite[Thm.~1.1]{RR18} we can explicitly construct an absolutely irreducible smooth genus $2$ curve $F$ such that $\Jac(C) \sim \Jac(F)\times E$.  Then, the curve $C$ cannot lie on $A$, since otherwise $A$ would be isogenous to $\Jac(F)$ by Lemma \ref{lem:jac}. Finally, one of the two conditions in the proposition must hold.
\end{proof}

Finally, we highlight that if an absolutely irreducible genus $3$ curve can be embedded in an abelian surface with Weil polynomial in $\PirrA\sqcup\PirrB\sqcup  \{ (t^2-2)^2 , (t^2-3)^2 \}$, then its number of rational points is far from the Serre-Weil bound.  This illustrates how embedding entails arithmetic constraints on curves. %present bounds on the number of rational points that an absolutely irreducible smooth genus $3$ curve lying on an abelian surface with Weil polynomial in $\PirrA\sqcup\PirrB\sqcup  \{ (t^2-2)^2 , (t^2-3)^2 \}$ can have.
To start with, we recall and extend a result from \cite{Haloui17} on the number of rational points on curves over abelian surfaces.
\begin{proposition}\label{prop:rationalpoints}
    Let $A$ be an abelian surface defined over $\Fq$ with trace $-a$, that is, with Weil polynomial $t^4+at^3+bt^2+qat+q^2$. Let $C$ be an absolutely irreducible curve defined over $\Fq$, of arithmetic genus $p_a(C)=p_a$, lying on $A$. Then
    \[ \vert\#C(\Fq)  - (q + 1+a) \vert \leq |p_a-2|\lfloor 2\sqrt{q}\rfloor.\]
    %\[ (q + 1+a)- |p_a-2|\lfloor 2\sqrt{q}\rfloor\leq \#C(\Fq)  \leq  (q + 1+a)+|p_a-2|\lfloor 2\sqrt{q}\rfloor.\]
\end{proposition}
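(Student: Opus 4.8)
The plan is to combine the adjunction-type constraints we already have with the Weil bounds for the curve $C$ and for the abelian surface $A$. First I would let $\tilde C \to C$ be the normalisation, so that $\tilde C$ has geometric genus $g \le p_a$, and recall that the inclusion $C \hookrightarrow A$ induces a nonconstant map $\tilde C \to A$, hence a surjection with connected kernel $\Jac(\tilde C) \twoheadrightarrow A'$ onto an abelian subvariety $A' \subseteq A$; since $A$ is a surface and $C$ is not rational (as $A$ contains no rational curve — although here we do not even need simplicity, only that $C$ maps nontrivially), one gets $\dim A' \in \{1,2\}$. The key numerical input is the relation between $\#C(\Fq)$, $\#\tilde C(\Fq)$, and $\#A(\Fq)$: one has $\#\tilde C(\Fq) \le \#C(\Fq) + (\text{number of points lost to singularities})$, but more usefully, pushing forward, $\#C(\Fq)$ is controlled by $\#\tilde C(\Fq)$ up to the contribution of the finitely many singular points, each of which is defined over $\Fq$ or permuted by Frobenius.

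The cleaner route, and the one I would actually carry out, avoids singular-point bookkeeping: I would compare the zeta function of $\tilde C$ with that of $A$ directly. Write the Frobenius eigenvalues of $A$ as $\alpha_1,\bar\alpha_1,\alpha_2,\bar\alpha_2$ with $|\alpha_i| = \sqrt q$, so $\#A(\Fq)$ and the trace $-a = -\sum(\alpha_i+\bar\alpha_i)$ are as usual, and note $\#C(\Fq) - (q+1) = -\sum \gamma_j$ where $\gamma_j$ are the $2g$ Frobenius eigenvalues of $\tilde C$ (plus a bounded error from singularities that I will need to absorb). Since $\Jac(\tilde C) \sim A' \times B$ with $A' \subseteq A$ an abelian subvariety of dimension $1$ or $2$, the eigenvalues of $A'$ are a sub-multiset of those of $A$, and the remaining $2(g - \dim A')$ eigenvalues of $B$ each have absolute value $\sqrt q$. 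Therefore
\[
\Bigl| \#\tilde C(\Fq) - (q+1) + \sum_{\text{eigenvalues of } A'} \gamma \Bigr| \le 2(g - \dim A') \sqrt q,
\]
and the sum over the eigenvalues of $A'$ equals $a$ when $\dim A' = 2$ (i.e. $A' = A$) and differs from $a$ by the trace of the complementary elliptic factor when $\dim A' = 1$. A short case analysis on $\dim A' \in \{1,2\}$, using that in the surface case $p_a \ge g$ forces the singular contribution and the genus-drop contribution $2(p_a - g)\lfloor 2\sqrt q\rfloor$ to line up with $|p_a - 2|\lfloor 2\sqrt q\rfloor$, should give the stated inequality; the refinement from $2\sqrt q$ to $\lfloor 2\sqrt q\rfloor$ comes from the Weil–Serre bound $|\gamma + \bar\gamma| \le \lfloor 2\sqrt q\rfloor$ applied eigenvalue-pair by eigenvalue-pair, exactly as in the original argument of \cite{Haloui17}.

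The step I expect to be the main obstacle is the bookkeeping when $C$ is singular: I need to show that passing from $\#\tilde C(\Fq)$ to $\#C(\Fq)$ changes the count by an amount that, together with the genus defect $p_a - g$, is absorbed into the factor $|p_a - 2|$ rather than $|g-2|$. Concretely, each singular point contributes a number of branches, and the difference $\#\tilde C(\Fq) - \#C(\Fq)$ is bounded by the total genus drop $\delta = p_a - g$ (in fact by $\sum_P \delta_P$), so one gets $|\#C(\Fq) - (q+1+a)| \le (g - \dim A' \text{ term}) + 2(p_a-g)\lfloor 2\sqrt q\rfloor$-type contributions; collecting these and checking that $(2-\dim A') + \ldots$ never exceeds $|p_a - 2|$ in any of the cases (including the degenerate ones where $C$ has geometric genus $0$ or $1$, though those are excluded here) is the delicate part. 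I would handle it by first proving the bound for smooth $C$ — where it is essentially \cite[Prop.~2]{Haloui17} sharpened by Serre's $\lfloor 2\sqrt q\rfloor$ — and then treating the singular case by the normalisation comparison above, noting that a singular curve of arithmetic genus $p_a$ has geometric genus strictly smaller, which only helps the inequality since $|g-2| \le |p_a-2|$ once $g \ge 2$, and the low-genus cases do not arise for the surfaces under consideration.
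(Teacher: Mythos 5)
Your overall strategy is the same as the paper's: pass to the normalisation $\tilde C$, control $\#\tilde C(\Fq)$ by relating the Frobenius eigenvalues of $\Jac(\tilde C)$ to those of $A$, and then account for the singular points via the genus defect $\delta = p_a - g$. That said, there are two concrete gaps in the way you have written out the absorption step, which you yourself flag as the delicate part.

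First, the displayed estimate with the term ``$2(p_a-g)\lfloor 2\sqrt q\rfloor$'' has the wrong coefficient: the Aubry--Perret bound (used in the paper as \cite[Prop.~2.3]{AP96}) gives $\#C(\Fq) - \#\tilde C(\Fq) \ge g - p_a$, an \emph{additive} defect of size $p_a - g$, not $2(p_a-g)\lfloor 2\sqrt q\rfloor$. Second, your closing observation ``$|g-2|\le|p_a-2|$ once $g\ge 2$'' does not by itself close the argument, because the passage from $\tilde C$ to $C$ contributes the extra additive term $p_a - g$ rather than rescaling the coefficient of $\lfloor 2\sqrt q\rfloor$. What is actually needed is the small but essential observation that
\[
-(g-2)\lfloor 2\sqrt q\rfloor + (g - p_a) \;\ge\; -(p_a-2)\lfloor 2\sqrt q\rfloor,
\]
which holds because $g - p_a \le 0$ and $\lfloor 2\sqrt q\rfloor \ge 1$, so $g-p_a \ge (g-p_a)\lfloor 2\sqrt q\rfloor$. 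That identity is what lets the genus defect be absorbed into the factor $|p_a-2|$. Finally, the remark that ``the low-genus cases do not arise'' is out of place: the proposition is stated for an arbitrary abelian surface $A$, so $p_a=1$ can occur, and the paper handles it separately (the curve is then smooth elliptic of some trace $-e$, and one compares $e$ with $a$ directly via $a=e+x_2$ with $|x_2|\le\lfloor 2\sqrt q\rfloor$). Your case split on $\dim A'\in\{1,2\}$ is morally equivalent but should be aligned with this: $\dim A'=1$ exactly when $p_a=g=1$.
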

\begin{proof}
    The upper bound on $\#C(\Fq)$ is \cite[Thm.~4]{Haloui17}. 
    The proof can be adapted to obtain also the lower bound on $\#C(\Fq)$. In what follows, we borrow the notation from the proof of \cite[Thm.~4]{Haloui17}.

    If $p_a=1$, then we know that the curve $C$ is elliptic, of trace say $-e$. Write $a=e+x_2$ for some integer $-\lfloor 2\sqrt{q}\rfloor\leq x_2 \leq \lfloor 2\sqrt{q}\rfloor$. Hence,
    \[ \#C(\Fq) = q+1 +e = q+1+a-x_2 \geq q+1+a - \lfloor 2\sqrt{q}\rfloor. \]

    Suppose now $p_a\geq 2$. 
    By \cite[Prop.~2.3]{AP96}, we know that $\#C(\Fq)-\# \tilde C(\Fq) \geq g-p_a$, where $\tilde C$ is the normalisation of $C$ and $g$ is its genus. Following the reasoning in the proof of  \cite[Thm.~4, Eq.~(5)]{Haloui17}, there exist real numbers $x_3,\dots,x_g$ such that
    $$\# \tilde C(\Fq)=q+1+a+\sum_{i=3}^g x_i\,\text{ and }\,\sum_{i=3}^g x_i \geq -(g-2)\lfloor 2\sqrt{q}\rfloor.$$ Using that  $g-p_a\leq 0$, we get the following series of inequalities:
    \begin{align*}
    \#C(\Fq)& \geq q+1+a+\sum_{i=3}^g x_i+g-p_a\\
    & \geq q+1+a - (g-2)\lfloor 2\sqrt{q}\rfloor +g -p_a\\
    & \geq q+1+a - (g-2)\lfloor 2\sqrt{q}\rfloor +(g - p_a)\lfloor 2\sqrt{q}\rfloor\\
    &= q+1+a - (p_a-2)\lfloor 2\sqrt{q}\rfloor,
    \end{align*}
    and the statement follows.
\end{proof}

Note that in \cite[Thm.~4]{Haloui17} the author needs the hypothesis $a\geq -q$, which is however only used  to prove the result for curves which are $\Fq$-irreducible but not absolutely irreducible. For such curves, the number of rational points cannot exceed $p_a-1$, but we cannot prove a lower bound different from the trivial one.

From Proposition \ref{prop:rationalpoints}, we deduce the following facts. First, recall that a Weil restriction has always zero trace. Thus, if $C$ is an absolutely irreducible curve of arithmetic genus $3$ lying on a Weil restriction, then
\begin{equation}\label{eq:bound_weilrestr}
    q + 1 - \lfloor 2\sqrt{q}\rfloor\leq\#C(\Fq)\leq  q + 1 + \lfloor 2\sqrt{q}\rfloor.
\end{equation}
Indeed, in this case, $C$ has the same number of rational points of the elliptic curve of which it is the double cover.

Secondly, we know that the trace of an abelian surface which does not admit a principal polarisation respects $a^2=q-b$. Then, if $C$ is an absolutely irreducible curve of arithmetic genus $3$ on such a surface, we have
\[q + 1 -\sqrt{q-b}- \lfloor 2\sqrt{q}\rfloor\leq\#C(\Fq)\leq  q + 1 +\sqrt{q-b} +\lfloor 2\sqrt{q}\rfloor.\]
A sloppy estimation, using that $|b|\leq 2q$ and hence $\sqrt{q-b}\leq \sqrt{3q}\leq 2 \sqrt{q}$, gives us
\begin{equation}\label{eq:bound_nopp}
    q + 1 - 2\lfloor 2\sqrt{q}\rfloor\leq\#C(\Fq)\leq  q + 1 + 2\lfloor 2\sqrt{q}\rfloor.
\end{equation}

In particular, we see that an absolutely irreducible curve of genus $3$ lying on a simple abelian surface not isogenous to the Jacobian of an absolutely irreducible smooth genus $2$ curve is far from reaching the Serre--Weil bound which states $|\#C(\Fq)-  (q + 1)|\leq 3\lfloor 2\sqrt{q}\rfloor$.

\section*{Acknowledgments}
The first author would like to thank Christophe Ritzenthaler for preliminary discussions about the central question of this paper back in 2019, and for putting her in contact with the third author.  The first and third authors thank Qing Liu for some useful discussion about curves on abelian surfaces during the CAVARET conference. The third author thanks Elisa Lorenzo Garc\'ia for valuable conversations.
The authors are grateful to Jonas Bergstr\"om and Gaetan Bisson for comments on a preliminary version of the paper. Finally, the authors appreciate the valuable insights provided by Jean Kieffer and Damien Robert during the CAIPI symposium in Nancy, which led to the final version of Section \ref{ses:curveside}.

The first author is supported by the grants ANR-21-CE39-0009-BARRACUDA, ANR-22-CPJ2-0047-01, and PEPS “Jeunes chercheurs et jeunes chercheuses" 2025 from Insmi. 
The third author is supported by Nederlandse Organisatie voor Wetenschappelijk Onderzoek, grant number VI.Veni.202.107, by
Agence Nationale de la Recherche under the MELODIA project, grant number ANR-20-CE40-0013, and by
Marie-Sk{\l}odowska-Curie Actions - Postdoctoral Fellowships 2023 (project 101149209 - AbVarFq).
   
\bibliographystyle{siam}
\bibliography{references_ASG3}
\end{document}